\documentclass[smallcondensed]{svjour3}      
\smartqed  
\usepackage{graphicx}


\usepackage{amssymb,amsmath, color}
\usepackage{amssymb}
\usepackage{amsbsy}
\usepackage{enumitem,algorithm2e,algorithmic}

\usepackage{ulem}

\usepackage[colorlinks=true]{hyperref}

\newcommand{\R}{{\mathbb R}}

\DeclareMathOperator{\argmin}{argmin}
\DeclareMathOperator{\prox}{prox}

\newcommand{\cD}{{\mathcal D}}
\newcommand{\cG}{{\mathcal G}}
\newcommand{\cC}{{\mathcal C}}

\newcommand{\cH}{{\mathcal H}}

\newcommand{\demi}{\frac{1}{2}}
\newcommand{\ie}{{\it i.e.}\,\,}

\newlength{\textlarg} 

\newcommand{\eqdef}{:=}

\newcommand{\pa}[1]{\left({#1}\right)}

\newcommand{\interior}{{\rm int}\kern 0.06em}
\newcommand{\inte}{{\rm int}\kern 0.06em}
\newcommand{\cl}{{\rm cl}\kern 0.06em}
\newcommand{\zer}{{\rm zer}\kern 0.06em}
\newcommand{\gph}{{\rm gph}\kern 0.06em}
\newcommand{\dom}{{\rm dom}\kern 0.06em}
\newcommand{\pr}{{\rm pr}\kern 0.06em}
\newcommand{\e}{\varepsilon}

\def\d{\delta}

\def\<{\langle}
\def\>{\rangle}

\hyphenation{Proposition}



\usepackage{epsfig}
\usepackage{geometry}
 \usepackage{pict2e}

\if
 {
 \usepackage[pageref]{backref}
\renewcommand*{\backrefalt}[4]{%
\ifcase #1 %
(Not cited)%
\or
(Cited on p.~#2)%
\else
(Cited on pp.~#2)%
\fi
}

}
\fi

\usepackage{ulem}

\begin{document}

\title{Accelerated gradient methods combining Tikhonov regularization 
with geometric damping driven by the Hessian}

\titlerunning{Inertial gradient dynamics with Tikhonov regularization}

\author{Hedy ATTOUCH \and A\"icha BALHAG   \and Zaki CHBANI   \and Hassan RIAHI}

\institute{
Hedy ATTOUCH  \at IMAG, Univ. Montpellier, CNRS, Montpellier, France\\
hedy.attouch@umontpellier.fr,
\and
A\"icha BALHAG  \at Institut de Math\'ematiques de Bourgogne, UMR 5584 CNRS, Universit\'e Bourgogne Franche-Comt\'e, F-2100 Dijon, France\\
aichabalhag@gmail.com 
\and Zaki CHBANI   \and Hassan RIAHI\\
 Cadi Ayyad University \\ S\'emlalia Faculty of Sciences 
 40000 Marrakech, Morocco\\
   chbaniz@uca.ac.ma  \and h-riahi@uca.ac.ma 
}
\maketitle


\begin{abstract}
  In a Hilbert  setting,  for convex differentiable optimization, we consider accelerated gradient dynamics combining Tikhonov regularization with Hessian-driven damping.
The Tikhonov regularization parameter is assumed to tend to zero as time tends to infinity, which preserves equilibria. 
 The presence of the Tikhonov regularization term induces  a strong convexity property which  vanishes asymptotically. 
 To take advantage of the exponential convergence rates attached to  the heavy ball method in the strongly convex case, we consider the inertial dynamic where  the viscous  damping coefficient  is taken proportional to the square root of the Tikhonov regularization parameter, and therefore  also converges towards zero. Moreover, the dynamic involves a geometric damping which is driven by the Hessian of the function to be  minimized, which induces a significant attenuation of the oscillations.
Under an appropriate tuning of the parameters, based on Lyapunov's analysis, we show that the trajectories have at the same time several remarkable properties: they provide fast convergence of  values, fast convergence of  gradients towards zero, and  
strong convergence  to the minimum norm  minimizer.
This study extends a previous paper by the authors where similar issues were examined but without the presence of  Hessian driven damping. 
\end{abstract}

\medskip

\keywords{Accelerated gradient methods; convex optimization; damped inertial dynamics; Hessian-driven damping; hierarchical minimization;    Nesterov accelerated gradient method; Tikhonov approximation.}

\medskip

\subclass{37N40, 46N10, 49M30, 65K05, 65K10, 65K15, 65L08, 65L09, 90B50, 90C25.}


\vspace{5mm}

\section{Introduction}

Throughout the paper, $\mathcal H$ is a real Hilbert space which is endowed with the scalar product $\langle \cdot,\cdot\rangle$, with $\|x\|^2= \langle x,x\rangle    $ for  $x\in \mathcal H$.
Given $f : \mathcal H \rightarrow \mathbb R$  a general convex function, which is continuously differentiable,
we will develop fast gradient methods for solving
 the  minimization problem
\begin{equation}\label{edo0001}
 \min \left\lbrace  f (x) : \ x \in \mathcal H \right\rbrace.
\end{equation}
Our approach is based on  the convergence properties as $t \to +\infty$ of the trajectories generated by the damped inertial dynamic 
\begin{equation*}
{\rm(TRISH)} \qquad \ddot{x}(t) + \delta \sqrt{\varepsilon(t)}  \dot{x}(t) +  \beta \nabla^{2}f(x(t))\dot{x}(t)+  \nabla f (x(t)) + \varepsilon (t) x(t) =0,
\end{equation*}
and on the link between dynamical systems and the algorithms that result from their temporal discretization.
We use (TRISH) as  shorthand for Tikhonov regularized inertial system with Hessian-driven damping.
As a basic ingredient, this system involves 
  a nonnegative function $\varepsilon (\cdot)$ which enters both in the viscous damping and the Tikhonov regularization terms.
We assume that $\lim_{t\rightarrow +\infty} \varepsilon(t) = 0$, which preserves the equilibria. According to the structure of (TRISH) this makes the damping coefficient asymptotically vanish, in coordination with  the Tikhonov regularization coefficient.
The other basic ingredient is the Hessian driven damping term which induces several favorable properties, notably a significant reduction of the oscillations.

\noindent We will show that a judicious setting of $\varepsilon (\cdot)$ and of the positive parameter $\delta$ ensures that the trajectories generated by (TRISH) verify the following three properties at the same time: 

\smallskip

\noindent \; $\bullet$ rapid convergence of values (one can approach arbitrarily close to the optimal convergence rate),

\noindent \; $\bullet$ rapid convergence of the gradients towards zero, 

\noindent \; $\bullet$  strong convergence towards the  minimum norm element of $S= \argmin f$.

\smallskip
 
\noindent Throughout the paper, we assume that the objective function  $f$ and the Tikhonov regularization parameter $\varepsilon (\cdot)$ satisfy the following hypothesis: 
\begin{align*}
( \mathcal{A}) \;\begin{cases}
 \; \; f : \mathcal H \rightarrow \mathbb R \mbox{ is convex, of class } \mathcal C^2,  \nabla f \mbox{ is Lipschitz continuous on  bounded sets}; \vspace{1mm} \\
 \; \; S := \mbox{argmin}_{\cH} f \neq \emptyset. \mbox{ We denote by } x^*  \mbox{ the element of minimum norm of } S;   \vspace{1mm}\\
\; \;  \varepsilon : [t_0 , +\infty [ \to \mathbb R^+  \mbox{ is   a nonincreasing function, of class } \mathcal C^1, \mbox{ such that }\  \lim_{t \to \infty} \varepsilon (t) =0.
\end{cases}
\end{align*}

 We will explain at the end of the article how our study can be extended to the case of a convex lower semicontinuous proper function $f: \cH \to \R \cup \left\lbrace +\infty \right\rbrace$, and give existence and uniqueness results for the Cauchy problem associated with our dynamics.

\subsection{The role of the Tikhonov regularization}
Initially designed for the regularization of ill-posed inverse problems \cite{Tikh,TA}, the  field of application of the Tikhonov regularization was then considerably widened. 
The coupling of first-order in time  gradient systems with a Tikhonov approximation whose coefficient tends asymptotically towards zero has been highlighted in a series of papers  \cite{AlvCab}, \cite{Att2},   \cite{AttCom}, \cite{AttCza2}, \cite{BaiCom}, \cite{Cab}, \cite{CPS}, \cite{Hirstoaga}. 
Our approach builds on several previous works that have paved the way
concerning the coupling of damped second-order in time gradient systems with Tikhonov approximation. First studies  concerned  the heavy ball with friction system of Polyak \cite{Polyak},
where the damping coefficient $\gamma >0$ is  fixed. In   \cite{AttCza1} Attouch and Czarnecki considered the  system
\begin{equation}\label{HBF-Tikh}
 \ddot{x}(t) + \gamma \dot{x}(t) + \nabla f(x(t)) + \varepsilon (t) x(t) =0.
\end{equation}
In the slow parametrization case $\int_0^{+\infty} \varepsilon (t) dt = + \infty$, they proved that  any solution $x(\cdot)$ of \eqref{HBF-Tikh} converges strongly to the minimum norm element of $\argmin f$, see also \cite{Att-Czar-last}, \cite{Cabot-inertiel}, \cite{CEG},  \cite{JM-Tikh}. This hierarchical minimization result contrasts with the case without the Tikhonov regularization term, where the convergence holds only for weak convergence, and the limit depends on the initial data. 

\noindent In the quest for a faster convergence, the following system
with asymptotically vanishing damping
\begin{equation}\label{edo001-0}
 \mbox{(AVD)}_{\alpha, \varepsilon} \quad \quad \ddot{x}(t) + \frac{\alpha}{t} \dot{x}(t) + \nabla f (x(t)) +\varepsilon(t) x(t)=0,
\end{equation}
was studied by Attouch, Chbani, and Riahi in \cite{ACR}.
It is a Tikhonov regularization of the  dynamic
\begin{equation}\label{edo001}
 \mbox{(AVD)}_{\alpha} \quad \quad \ddot{x}(t) + \frac{\alpha}{t} \dot{x}(t) + \nabla f (x(t))=0,
\end{equation}
which was introduced by  Su, Boyd and
Cand\`es in \cite{SBC}. $\mbox{(AVD)}_{\alpha}$ is a low resolution ODE of the   accelerated gradient method of Nesterov \cite{Nest1,Nest2} and  of the Ravine method \cite{AF}, \cite{SBC}.
$ \mbox{(AVD)}_{\alpha}$ has been the subject of many recent studies which have given an in-depth understanding of the Nesterov accelerated gradient method, see  \cite{AAD1}, \cite{ABCR}, \cite{AC10}, \cite{ACPR},\cite{AP}, \cite{CD}, \cite{MME}, \cite{SBC}, \cite{Siegel}, \cite{WRJ}.

As an original aspect of our approach, we rely on  the properties of the heavy ball with friction method of Polyak in the
\textit{strongly convex case}, which 
provides exponential convergence rates. To take advantage of this remarkable property, and adapt it to our situation, we consider the nonautonomous dynamic version of  the heavy ball method which at time $t$ is governed by the gradient of the regularized function
$x\mapsto f(x) + \frac{\varepsilon (t)}{2}\|x\|^2$, where the Tikhonov regularization parameter satisfies $\varepsilon (t) \to 0$ as $t\to +\infty$. This idea was first developed in \cite{ABCR}, \cite{AL}. Let us make this precise.
 
 Recall that
a function  $f: \cH \to \mathbb R$ is $\mu$-strongly convex for some $\mu >0$ if   $f- \frac{\mu}{2}\| \cdot\|^2$ is convex.
In this setting, we have the following exponential convergence result for the heavy ball with friction  dynamic where the viscous damping coefficient is twice the square root of the modulus of strong convexity of $f$, see \cite{Polyak-64}:
\begin{theorem}\label{strong-conv-thm}
Suppose that $f: \cH \to \mathbb R$ is a function of class ${\mathcal C}^1$ which is $\mu$-strongly convex for some $\mu >0$.
Let  $x(\cdot): [t_0, + \infty[ \to \cH$ be a solution trajectory of
\begin{equation}\label{dyn-sc-aa}
\ddot{x}(t) + 2\sqrt{\mu} \dot{x}(t)  + \nabla f (x(t)) = 0.
\end{equation}
 Then, the following property holds: \; 
$
f(x(t))-  \min_{\mathcal H}f  = \mathcal O \left( e^{-\sqrt{\mu}t}\right) \;  \mbox{ as } \; t \to +\infty.$
\end{theorem}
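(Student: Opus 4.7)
The plan is to construct a Lyapunov function $\mathcal{E}(t)$ that (i) dominates $f(x(t)) - \min_{\cH} f$ and (ii) decays along trajectories at rate $e^{-\sqrt{\mu} t}$; the theorem then drops out immediately via Gronwall's inequality. Strong convexity guarantees that $\argmin f$ is the singleton $\{x^*\}$, so every quantity can be centered at $x^*$.

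Because the viscous damping coefficient $2\sqrt{\mu}$ is precisely the critical damping for a linear oscillator of pulsation $\sqrt{\mu}$, the natural candidate is
$$\mathcal{E}(t) \;=\; f(x(t)) - f(x^*) \;+\; \frac{1}{2}\bigl\|\dot{x}(t) + \sqrt{\mu}\bigl(x(t) - x^*\bigr)\bigr\|^2,$$
which is manifestly nonnegative and bounds $f(x(t)) - \min_{\cH} f$ from above. I would differentiate $\mathcal{E}$ along the dynamic \eqref{dyn-sc-aa}, substitute $\ddot{x}(t) = -2\sqrt{\mu}\,\dot{x}(t) - \nabla f(x(t))$ to kill the second-order term, and collect the result as a sum of three pieces: one in $\|\dot{x}\|^2$, one cross term in $\langle \dot{x}, x - x^*\rangle$, and one in $\langle \nabla f(x), x - x^*\rangle$.

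At that point the key analytic step is to invoke the $\mu$-strong convexity inequality
$$\langle \nabla f(x), x - x^* \rangle \;\geq\; f(x) - f(x^*) + \frac{\mu}{2}\|x - x^*\|^2,$$
which is exactly tailored to absorb the $f(x)-f(x^*)$ term and the quadratic term $\|x-x^*\|^2$ that appear in $\sqrt{\mu}\,\mathcal{E}(t)$ once the squared norm in $\mathcal{E}$ is expanded. A direct comparison should then yield the differential inequality
$$\dot{\mathcal{E}}(t) + \sqrt{\mu}\,\mathcal{E}(t) \;\leq\; -\frac{\sqrt{\mu}}{2}\,\|\dot{x}(t)\|^2 \;\leq\; 0,$$
and integrating gives $\mathcal{E}(t) \leq \mathcal{E}(t_0)\, e^{-\sqrt{\mu}(t-t_0)}$, which is the desired conclusion.

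The hard part is not the algebra, which is routine, but the \emph{design} of $\mathcal{E}$: the coefficient $\sqrt{\mu}$ in the shifted velocity $\dot{x} + \sqrt{\mu}(x-x^*)$ has to match both the viscous coefficient and the oscillation pulsation simultaneously, and any mismatch either destroys the sign of $\dot{\mathcal{E}} + \sqrt{\mu}\mathcal{E}$ or forces a strictly slower exponential decay than $e^{-\sqrt{\mu}t}$. Once this correct Lyapunov function is in hand, the proof is essentially a one-line Gronwall argument.
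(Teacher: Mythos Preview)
Your proof is correct: the Lyapunov function $\mathcal{E}(t) = f(x(t)) - f(x^*) + \tfrac{1}{2}\|\dot{x}(t) + \sqrt{\mu}(x(t)-x^*)\|^2$ is exactly the right choice, and the computation $\dot{\mathcal{E}} + \sqrt{\mu}\,\mathcal{E} \leq -\tfrac{\sqrt{\mu}}{2}\|\dot{x}\|^2$ goes through as you describe (the cross terms $\mu\langle \dot{x}, x-x^*\rangle$ and the quadratic terms $\tfrac{\mu^{3/2}}{2}\|x-x^*\|^2$ cancel exactly once strong convexity is invoked).

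As for comparison with the paper: there is nothing to compare. The paper does not prove this theorem; it is quoted in the introduction as a classical result due to Polyak \cite{Polyak-64}, serving only as motivation for the design of the nonautonomous system (TRIGS). Your Lyapunov function is in fact the prototype of the much more elaborate energy $E_p(t)$ in \eqref{3}--\eqref{3b} that drives the paper's main analysis: there the fixed minimizer $x^*$ is replaced by the moving anchor $x_{\varepsilon(t)}$, the fixed coefficient $\sqrt{\mu}$ by $\lambda\sqrt{\varepsilon(t)}$, and an additional Hessian correction term is appended, but the skeleton is the same.
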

\noindent To adapt this result to the case of a general convex differentiable function $f: \cH \to \mathbb R$, a natural idea  is to use Tikhonov's method of regularization. This leads to consider the  non-autonomous dynamic which at time $t$ is governed by the gradient of the strongly convex function 
 $$\varphi_t: \cH \to \mathbb R, \quad
\varphi_t (x) := f(x) + \frac{\varepsilon(t)}{2} \|x\|^2.
 $$
The viscosity curve $\displaystyle{\varepsilon \mapsto x_{\varepsilon}
\eqdef \argmin_{\cH} \left\lbrace  f(\cdot) + \frac{\varepsilon}{2}\|\cdot \|^2 \right\rbrace}$ will play a key role in our analysis.
By definition of $\varphi_t$, we have 
$
x_{\varepsilon(t)} = \argmin_{\cH}{\varphi}_{t}.
$
 The first-order optimality condition gives 
\begin{equation}\label{opt}
\nabla f(x_{\varepsilon(t)})+\varepsilon(t)x_{\varepsilon(t)}=0.
\end{equation} 
 We call $t \mapsto x_{\varepsilon(t)}$ the parametrized viscosity curve.
  Then, replacing $f$ by $\varphi_t$ in
\eqref{dyn-sc-aa}, and noticing that $\varphi_t$ is $\varepsilon (t)$-strongly convex, this gives the following dynamic which was introduced in \cite{AL} and \cite{ABCR} ($\delta$ is a positive parameter)
\begin{equation*}
{\rm(TRIGS)} \qquad \ddot{x}(t) + \d\sqrt{\e(t)}  \dot{x}(t) + \nabla f (x(t)) + \varepsilon (t) x(t) =0.
\end{equation*}
 (TRIGS) stands shortly for Tikhonov regularization of inertial gradient systems.
In order not to asymptotically modify the equilibria, it is supposed that $\varepsilon (t) \to 0$ as $t\to +\infty$\footnote{This is the key property of the asymptotic version ($t\to +\infty$) of the Browder-Tikhonov regularization method.}. This condition implies that (TRIGS) falls within the framework of the inertial gradient systems with asymptotically vanishing damping.
 It has been shown in  \cite{ABCR}, \cite{AL} that a judicious tuning of  $\varepsilon (t)$ in (TRIGS) ensures both rapid convergence of values, 
 and  strong convergence of the trajectories towards the  minimum norm element of $S= \argmin_{\cH} f$ (which is reminiscent of the Tikhonov method).

 \subsection{The role of the Hessian-driven damping}
As is the case with inertial dynamics which are only damped by viscous damping, the system (TRIGS) may exhibit oscillations
which are undesirable from an optimization point of view. 
To remedy this situation, we introduce into the dynamic a geometric damping which is driven by the Hessian of the function $f$ to be minimized.
So doing, we obtain the  system (TRISH).
The presence of the Hessian does not entail numerical difficulties, since the Hessian intervenes in the above ODE in the form $\nabla^2  f (x(t)) \dot{x} (t)$, which is nothing but the derivative wrt time of $\nabla  f (x(t))$. This explains why the time discretization of this dynamic provides first-order algorithms.
The importance of the Hessian driven damping has been demonstrated in several areas. We list some of them below. 

\smallskip

$\bullet$  In the field of PDEs for mechanics and physics, it is called strong damping, or geometric damping because it takes into account the geometry of the function to be minimized. 
In the PDE's framework, when $f$ is quadratic, and $\nabla f=A$ is a linear elliptic operator, the strong damping involves the action of a fractional power $A^\theta$ of $A$ on the velocity vector. When $\theta \geq \demi$, this induces  notably reduced oscillations. 
The Hessian-driven damping corresponds to $\theta =1$.
It can be combined with various other types of damping, such as the dry friction \cite{AAV-algo}. 
It also makes it possible to model shocks which are completely damped in unilateral mechanics \cite{AMR}.

\smallskip

$\bullet$  It has been shown in \cite{AF} and \cite{SDJS} that the high resolution ODE of the Ravine and Nesterov methods exhibits the Hessian driven damping. This explains the rapid convergence of the gradients towards zero which is verified by these dynamics and algorithms \cite{ACFR}, \cite{ACFR-Optimisation}, \cite{APR}, 
\cite{BCL}, \cite{SDJS}.
Our approach is in accordance with Nesterov \cite{Nest3}, where it is conjectured that the introduction of an adapted Tikhonov regularization term
helps to make the gradients small.

\smallskip

$\bullet$   The Hessian driven damping  comes into the study of Newton's method in optimization.
Given a general maximally monotone operator $A: \cH \to 2^{\cH}$,
to overcome the ill-posedness of  Newton's   continuous method for solving $0 \in A(x)$, 
the following first-order evolution system  was considered by Attouch and Svaiter \cite{AS} and  studied further in \cite{AAS}, \cite{AMAS}. Formally, this system is written as

\smallskip

\begin{center}
$
\gamma(t)  \dot{x}(t) + \beta   \frac{d}{dt} \left( A(x(t))\right)   + A(x(t)) = 0.
$
\end{center}


\noindent It can be considered as a continuous version of the Levenberg-Marquardt method, which acts as a regularization of the Newton method.
Under a fairly general assumption on the regularization parameter $\gamma (\cdot)$, this system is well posed and generates trajectories that converge weakly to equilibria.
Thus, (TRISH) and its nonsmooth extension can be considered as an inertial and regularized version of this  system when $A$ is the subdifferential of a convex lower semicontinuous proper function.

 \subsection{A model result}
 
In section \ref{sec:particular-cases}, we will prove the following result in the case $\varepsilon (t) = \frac{1}{t^r}$. It is expressed with the help  of the parametrized viscosity curve which converges strongly to the minimum norm solution.

\begin{theorem}\label{thm:model-intro}
Take   $0<r<2$, \; $\delta>2$, \; $\beta >0$.
Let $x : [t_0, +\infty[ \to \mathcal{H}$ be a solution trajectory of
		\begin{equation}\label{particular-intro}
		\ddot{x}(t) + \frac{\d}{ \displaystyle{t^{\frac{r}{2}}}}\dot{x}(t) +\beta\nabla^{2} f\left(x(t) \right)\dot{x}(t)+ \nabla f\left(x(t) \right)+ \frac{1}{t^r} x(t)=0.
		\end{equation}
	Then, we have fast convergence of the values, fast convergence of the gradients towards zero, 
	and  strong convergence of the trajectory to the minimum norm 	solution, with the following rates: 
	
	\smallskip
			
$\bullet$ \; $f(x(t))-\min_{\cH} f= \mathcal O \left( \displaystyle\frac{1}{t^{r} }   \right)$  \;  as $ t \to +\infty$ ;
 
$\bullet$ \; $\displaystyle{\int_{t_0}^{+\infty}t^{\frac{3r -2}{2}}\Vert\nabla f(x(t))\Vert^2 dt<+\infty }$;

$\bullet$ \; $ \|x(t) -x_{\varepsilon(t)}\|^2=\mathcal{O}\left(\displaystyle{\dfrac{1}{ t^{\frac{2-r}2}}}\right)$ \;  as $ t \to +\infty$.
\end{theorem}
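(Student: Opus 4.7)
My plan is a Lyapunov analysis anchored at the viscosity curve $x_{\varepsilon(t)}$, using the fact that the regularized objective $\varphi_t(\cdot) := f(\cdot) + \frac{\varepsilon(t)}{2}\|\cdot\|^2$ is $\varepsilon(t)$-strongly convex. Using the identity $\beta \nabla^2 f(x)\dot x = \beta \frac{d}{dt}\nabla f(x)$, I set $\psi(t) := \dot x(t) + \beta\nabla f(x(t))$; the optimality condition \eqref{opt} gives $\nabla \varphi_t(x_{\varepsilon(t)}) = 0$, and \eqref{particular-intro} rewrites as
\begin{equation*}
\dot\psi(t) + \frac{\delta}{t^{r/2}}\dot x(t) + \nabla \varphi_t(x(t)) - \nabla \varphi_t(x_{\varepsilon(t)}) = 0,
\end{equation*}
placing us in the strongly convex framework of Theorem \ref{strong-conv-thm} with a time-varying modulus $\mu = \varepsilon(t) = t^{-r}$ and a gradient correction hidden inside the ``velocity'' $\psi$.

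Guided by the Polyak-type Lyapunov function associated with \eqref{dyn-sc-aa}, I would work with an energy of the form
\begin{equation*}
\mathcal{E}(t) = a(t)\bigl[\varphi_t(x(t)) - \varphi_t(x_{\varepsilon(t)})\bigr] + \tfrac{b(t)}{2}\bigl\|\psi(t) + \lambda(t)\bigl(x(t) - x_{\varepsilon(t)}\bigr)\bigr\|^2 + c(t)\bigl\|x(t) - x_{\varepsilon(t)}\bigr\|^2,
\end{equation*}
with polynomial weights $a(t) \sim t^{(r+2)/2}$, $b(t) \sim t^{(3r-2)/2}$, $\lambda(t) \sim t^{-r/2}$ (matched to $\sqrt{\varepsilon(t)}$), and an auxiliary $c(t)$ chosen to close the inequality. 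The computation of $\dot{\mathcal E}$ relies on the rewritten dynamic, the strong convexity estimate $\langle \nabla\varphi_t(x) - \nabla\varphi_t(x_{\varepsilon(t)}), x - x_{\varepsilon(t)}\rangle \geq \varepsilon(t)\|x - x_{\varepsilon(t)}\|^2$, the envelope identity $\frac{d}{dt}\varphi_t(x_{\varepsilon(t)}) = \frac{\dot\varepsilon(t)}{2}\|x_{\varepsilon(t)}\|^2$, and the bound $\|\dot x_{\varepsilon(t)}\| \leq |\dot\varepsilon(t)|\,\|x_{\varepsilon(t)}\|/\varepsilon(t)$ obtained by differentiating \eqref{opt}. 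The key algebraic fact is that including $\beta\nabla f(x)$ inside $\psi$ generates, when the weighted kinetic energy is differentiated, the term $b(t)\beta\|\nabla f(x)\|^2$, i.e., a dissipation with weight exactly $t^{(3r-2)/2}$.

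After calibration, the target inequality reads
\begin{equation*}
\dot{\mathcal E}(t) + \kappa\, t^{(3r-2)/2}\|\nabla f(x(t))\|^2 \leq h(t),
\end{equation*}
where $\kappa > 0$ and $h \in L^1([t_0, +\infty[)$ collects the perturbation from $\dot\varepsilon$ and from the moving anchor $x_{\varepsilon(t)}$; these are integrable since $|\dot\varepsilon(t)|/\sqrt{\varepsilon(t)} = r\,t^{-1-r/2}$ is $L^1$ and $\|x_{\varepsilon(t)}\| \leq \|x^*\|$ is uniformly bounded. Integration gives the second claim directly, together with $\mathcal E(t) = O(1)$; strong convexity then yields
\begin{equation*}
\|x(t) - x_{\varepsilon(t)}\|^2 \;\leq\; \frac{2}{\varepsilon(t)}\bigl[\varphi_t(x(t)) - \varphi_t(x_{\varepsilon(t)})\bigr] \;\leq\; \frac{2\,\mathcal E(t)}{a(t)\varepsilon(t)} \;=\; O\bigl(t^{-(2-r)/2}\bigr),
\end{equation*}
the third claim. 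Finally, $f(x(t)) - \min f \leq \varphi_t(x(t)) - \varphi_t(x_{\varepsilon(t)}) + \frac{\varepsilon(t)}{2}\|x^*\|^2 = O(t^{-(r+2)/2}) + O(t^{-r}) = O(t^{-r})$, the Tikhonov term dominating since $r < 2$.

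The principal obstacle I foresee is the simultaneous calibration of $a$, $b$, $\lambda$, $c$, and $\beta$: every sign-indeterminate cross term in $\dot{\mathcal E}$ must be absorbed either into the strong-convexity gain $a(t)\varepsilon(t)\|x-x_{\varepsilon(t)}\|^2$ or into the viscous dissipation $\delta b(t) t^{-r/2}\|\dot x\|^2$. In particular, controlling $b'(t)\|\psi\|^2$ against $\delta b(t) t^{-r/2}\|\dot x\|^2$ is precisely where the condition $\delta > 2$ enters, and where the exponent $(3r-2)/2$ emerges as the largest power compatible with the scheme.
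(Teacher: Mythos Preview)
Your ingredients are the right ones—the anchor $x_{\varepsilon(t)}$, the strong convexity of $\varphi_t$, the grouping $\psi=\dot x+\beta\nabla f$, and a $\lambda(t)\sim\sqrt{\varepsilon(t)}$ inside the kinetic norm—but the calibration you propose does not close, for two concrete reasons.

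\textbf{(i) Unequal weights $a\neq b$ break the basic cancellation.} Differentiating the potential block produces $+a(t)\langle\nabla\varphi_t(x),\dot x\rangle$; differentiating the kinetic block and inserting $\dot\psi=-\delta t^{-r/2}\dot x-\nabla\varphi_t(x)$ produces $-b(t)\langle\nabla\varphi_t(x),\dot x\rangle$. With $a\sim t^{(r+2)/2}$ and $b\sim t^{(3r-2)/2}$ the leftover $(a-b)\langle\nabla\varphi_t(x),\dot x\rangle$ is sign-indeterminate, and a Young split forces a $\|\nabla\varphi_t\|^2$ term with coefficient of order $(a-b)^2/\bigl(b\,t^{-r/2}\bigr)\sim t^{3}$, which swamps the gradient dissipation $b\beta\sim t^{(3r-2)/2}$ for every $r<2$. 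So $a=b$ is forced.

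\textbf{(ii) With $a=b\sim t^{(r+2)/2}$, the forcing is not integrable.} The term $-a(t)\frac{d}{dt}\varphi_t(x_{\varepsilon(t)})=-\tfrac{a(t)\dot\varepsilon(t)}{2}\|x_{\varepsilon(t)}\|^2$ has size $a(t)|\dot\varepsilon(t)|\sim t^{(r+2)/2}\cdot t^{-r-1}=t^{-r/2}$, which is \emph{not} in $L^1$ for $r<2$. Your estimate $|\dot\varepsilon|/\sqrt\varepsilon=r\,t^{-1-r/2}\in L^1$ omits the weight; once multiplied by $a(t)$ the perturbation diverges, so $\dot{\mathcal E}+\kappa t^{(3r-2)/2}\|\nabla f\|^2\le h$ with $h\in L^1$ cannot hold.

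The paper resolves both issues by abandoning polynomial weights. It keeps $a=b=1$ in $E_0(t)=\bigl(\varphi_t(x)-\varphi_t(x_{\varepsilon(t)})\bigr)+\tfrac12\|v_0\|^2$ and derives a true Gr\"onwall inequality $\dot E_0+\mu(t)E_0\le \tfrac{\|x^*\|^2}{2}G(t)$ with $\mu(t)\sim(\delta-\lambda)t^{-r/2}$; the integrating factor $\gamma(t)=\exp\!\int\mu$ grows \emph{exponentially} in $t^{(2-r)/2}$, and although $\int\gamma G$ diverges, the ratio $\gamma(t)^{-1}\!\int\gamma G$ is shown to decay like $t^{-(r+2)/2}$, giving the pointwise rates. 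The weighted gradient integral is then obtained in a \emph{second pass}: with $\lambda=0$ the simpler energy $\mathcal E_0$ satisfies $\dot{\mathcal E}_0\le -\tfrac{\beta}{2}\|\nabla f\|^2+\cdots$; one multiplies by $t^{(3r-2)/2}$, integrates by parts, and bounds $\int t^{(3r-4)/2}\mathcal E_0$ using the \emph{already established} rate $\mathcal E_0(t)=O(t^{-(r+2)/2})$. The two-step structure (exponential integrating factor for pointwise rates, then bootstrap for the gradient integral) is essential; a single polynomial-weighted energy cannot deliver all three conclusions.
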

This is the first time that these three properties have been obtained within the same dynamic.
Let us note that by taking $r$ close to $2$, one obtains convergence rates comparable to the most recent results concerning the introduction of the Hessian driven damping in the dynamic associated with the accelerated gradient of Nesterov. Precisely, letting $r\to 2$ in the  formulas above gives  $f(x(t))-\min_{\cH} f= \mathcal O ( 1/t^{2} )$, and  $\int_{t_0}^{+\infty}t^{2}\Vert\nabla f(x(t))\Vert^2 dt<+\infty   $. 
Thus, by taking $r$ sufficiently close to $2$, we can obtain convergence rates arbitrarily close to these rates.
The case $r=2$, which corresponds to the Nesterov accelerated gradient method, is critical: in this case, the strong convergence towards the minimum norm solution is an open question. 
The above results show the balance between   fast convergence of  values and  strong convergence to the minimum norm solution.

\subsection{Contents}
The paper is organized as follows.
In section \ref{sec:Lyap}, for a general Tikhonov regularization parameter $\varepsilon(\cdot)$, we study the asymptotic convergence properties of the solution trajectories of (TRISH). Based on Lyapunov analysis, we show their strong convergence to the  minimum norm element of $S$, and establish the convergence  rates of the  values and integral estimates of the  gradients.
In section \ref{sec:particular-cases}, we apply these results to 
the particular case $\varepsilon (t) =\frac{1}{t^r}$, $0<r<2$, and obtain fast convergence results.
Section \ref{num} contains numerical illustrations. 
Section \ref{sec:nonsmooth} gives indications concerning the extension of our study to the nonsmooth case, and provides existence and uniqueness results for the Cauchy problem associated with the  considered dynamics.
We conclude with a perspective and open questions.

\section{Convergence results via Lyapunov analysis}
\label{sec:Lyap}

Given a general regularization parameter $\varepsilon(\cdot)$,
we successively present the idea  guiding the Lyapunov analysis, then some preparatory lemmas, and finally the detailed proof.
In the next section, we will particularize our results to the case $\varepsilon (t) =\frac{1}{t^r}$, $0<r<2$, and obtain fast convergence results.
\subsection{General idea of the proof}
  As we already mentioned,  the function
\begin{equation}\label{def:phi}
\varphi_t: \cH \to \mathbb R, \quad
\varphi_t (x) := f(x) + \frac{\varepsilon(t)}{2} \|x\|^2
\end{equation}
plays a central role in the Lyapunov analysis, via its strong convexity property.
Thus, it is convenient to reformulate (TRISH) with the help of the function $\varphi_t$, which gives
\begin{equation}\label{1sans}
{\rm(TRISH)} \qquad \ddot{x}(t)+\delta\sqrt{\varepsilon(t)}\dot{x}(t)+\beta \nabla^{2}f(x(t))\dot{x}(t)+\nabla {\varphi}_{t}(x(t))=0,
\end{equation}
where    $\delta,\beta$ are positive parameters. We recall that  $\varepsilon : [t_0 , +\infty [ \to \mathbb R^+  $ is a nonincreasing
function of class $\mathcal{C}^{1}$, such that $\lim_{t\rightarrow +\infty} \varepsilon(t) = 0$. 
In the mathematical analysis of  inertial gradient dynamics and algorithms with Hessian-driven damping, the basic equality
\begin{equation}\label{basic_equ_1}
\frac{d}{dt} \nabla f(x(t)) =  \nabla^{2}f(x(t))\dot{x}(t)
\end{equation}
makes these systems relevant to first-order methods, a crucial property for numerical purposes. 
 In the presence of the Tikhonov term, to keep the structural property attached to \eqref{basic_equ_1},
let us introduce the following variant of (TRISH) where the above relation comes with $\varphi_{t}$ instead of $f$:
\begin{equation}\label{1b}
{\rm(TRISHE)} \qquad  \ddot{x}(t)+\delta\sqrt{\varepsilon(t)}\dot{x}(t)+\beta\dfrac{d}{dt}\left(\nabla \varphi_{t}(x(t))\right)+\nabla\varphi_{t}(x(t))=0.
\end{equation}
Adding the  suffix E after TRISH recalls that the dynamic has been adapted to take advantage of the Equality in \eqref{basic_equ_1}, with $\varphi_{t}$ instead of $f$.
To encompass these two dynamic systems, we consider
\begin{equation}\label{1principal}
\ddot{x}(t)+\delta\sqrt{\varepsilon(t)}\dot{x}(t)+\beta\dfrac{d}{dt}\left[\nabla \varphi_{t}(x(t))+(p-1)\varepsilon(t)x(t)\right]+\nabla\varphi_{t}(x(t))=0,
\end{equation}
where the parameter  $p\in [0,1].$
When $p=0$ we get (TRISH), and for $p=1$ we get (TRISHE).

Given $p\in [0,1]$, let us introduce the real-valued function $t \in [t_0, +\infty[ \mapsto E_p(t) \in \R^+$ that plays a key role in our Lyapunov analysis. It is defined by
\begin{equation}\label{3}
E_p(t)\eqdef 
\left(\varphi_{t}(x(t))-\varphi_{t}(x_{\varepsilon(t)})\right) +\dfrac{1}{2}\|v_p(t)\|^{2}
\end{equation}
where $\varphi_{t}$ has been defined in (\ref{def:phi}), $x_{\varepsilon(t)}$ in  \eqref{opt},   and
\begin{equation}\label{3b}
v_p(t)\eqdef \lambda\sqrt{\varepsilon(t)}\left(x(t)-x_{\varepsilon(t)}\right)+\dot{x}(t)+\beta\left[\nabla \varphi_{t}(x(t))+(p-1)\varepsilon(t)x(t)\right],
\end{equation} 
with $0\leq \lambda < \delta$. We will show that under a judicious setting of  parameters,  $ E_p(\cdot)$  satisfies the first-order differential inequality 
\begin{equation}\label{est:basic2}
\dot{E}_p(t)+\mu(t)E_p(t)+\frac{\beta}{2 \delta}\left( \delta -\lambda\right)\Vert\nabla \varphi_t(x(t))\Vert^2 \leq \frac{\Vert x^* \Vert^2}{2}G(t), 
 \end{equation}
 where
 \begin{equation}\label{def:mu}
\mu(t) \eqdef -\dfrac{\dot{\varepsilon}(t)}{2\varepsilon(t)}+(\delta-\lambda)\sqrt{\varepsilon(t)},
\end{equation} 
and
\begin{equation*}
G(t) \eqdef (\lambda c+2a)\lambda\dfrac{\dot{\varepsilon}^{2}(t)}{\varepsilon^{\frac{3}{2}}(t)}-\dot{\varepsilon}(t)+(1-p)\beta \lambda(\delta-\lambda)\varepsilon^{2}(t).
\end{equation*} 
\noindent Since $\mu(t) >0$, this will allow us to estimate the rate of convergence of $E_p(t)$ towards zero. 
In turn, this  provides convergence rates of values and trajectories, as the following lemma shows. 

\begin{lemma}\label{lem-basic-b}
	Let   $x(\cdot): [t_0, + \infty[ \to \cH$ be a solution trajectory of  the damped inertial dynamic \eqref{1principal}, and $t \in [t_0, +\infty[ \mapsto E_p(t) \in \R^+$ be the energy function defined in \eqref{3}. Then, the following estimates are satisfied:   for any $t\geq t_0$, 
	
	\begin{eqnarray}
	&&f(x(t))-\min_{\mathcal H}f	
	\leq  E_p(t)+\dfrac{\varepsilon(t)}{2}\|x^*\|^{2}; \label{keybb-00}\\
	&&\|x(t) - x_{\varepsilon(t)}\|^2  \leq \frac{2E_p(t)}{\varepsilon(t)} \label{est:basic1}.
	\end{eqnarray}
	Therefore, $x(t)$ converges strongly to $x^*$
	as soon as 
	$
	\lim_{t\to +\infty} \displaystyle{\frac{E_p(t)}{\varepsilon(t)}}=0.
	$
\end{lemma}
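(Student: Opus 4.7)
The plan is to exploit two key elementary properties: first, that $\varphi_t$ is $\varepsilon(t)$-strongly convex with minimizer $x_{\varepsilon(t)}$, and second, that $x^*\in\argmin f$ is a convenient comparison point whose value $\varphi_t(x^*)$ is easy to bound.

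For the first inequality \eqref{keybb-00}, I would start by discarding the nonnegative kinetic term $\tfrac{1}{2}\|v_p(t)\|^2$ to get $E_p(t)\geq \varphi_t(x(t))-\varphi_t(x_{\varepsilon(t)})$. I would then drop the nonnegative quadratic part of $\varphi_t(x(t))$ to bound this below by $f(x(t))-\varphi_t(x_{\varepsilon(t)})$. The remaining task is to upper-bound $\varphi_t(x_{\varepsilon(t)})$; since $x_{\varepsilon(t)}$ minimizes $\varphi_t$, we have $\varphi_t(x_{\varepsilon(t)})\leq \varphi_t(x^*)=f(x^*)+\tfrac{\varepsilon(t)}{2}\|x^*\|^2=\min_\cH f+\tfrac{\varepsilon(t)}{2}\|x^*\|^2$. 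Combining these two chains of inequalities gives \eqref{keybb-00} immediately.

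For the second inequality \eqref{est:basic1}, I would again drop $\tfrac{1}{2}\|v_p(t)\|^2$ and invoke $\varepsilon(t)$-strong convexity of $\varphi_t$ at its minimizer $x_{\varepsilon(t)}$: the standard quadratic lower bound yields
\begin{equation*}
\varphi_t(x(t))-\varphi_t(x_{\varepsilon(t)})\geq \frac{\varepsilon(t)}{2}\|x(t)-x_{\varepsilon(t)}\|^2,
\end{equation*}
so that $E_p(t)\geq \tfrac{\varepsilon(t)}{2}\|x(t)-x_{\varepsilon(t)}\|^2$, which rearranges to \eqref{est:basic1}.

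For the strong convergence conclusion, \eqref{est:basic1} together with the assumption $E_p(t)/\varepsilon(t)\to 0$ gives $\|x(t)-x_{\varepsilon(t)}\|\to 0$. I would then invoke the classical Browder--Tikhonov property of the viscosity curve: since $\varepsilon(t)\to 0^+$, $x_{\varepsilon(t)}\to x^*$ strongly in $\cH$ (this standard fact follows from minimality of $\varphi_t$ at $x_{\varepsilon(t)}$ tested against $x^*$, which yields $\|x_{\varepsilon(t)}\|\leq\|x^*\|$, combined with $f(x_{\varepsilon(t)})\to\min_\cH f$ and uniqueness of the minimum-norm minimizer). The triangle inequality then delivers $x(t)\to x^*$ strongly.

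No step presents a genuine obstacle: everything reduces to the strong convexity of $\varphi_t$, the comparison $\varphi_t(x_{\varepsilon(t)})\leq \varphi_t(x^*)$, and the classical Tikhonov viscosity result already cited in the introduction. The only point worth being explicit about is that the estimates hold pointwise in $t$ and use no differential inequality — the energy function $E_p$ is here used purely algebraically through its definition \eqref{3}, not through \eqref{est:basic2}.
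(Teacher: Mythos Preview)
Your proposal is correct and follows essentially the same approach as the paper's proof: both arguments drop the kinetic term $\tfrac{1}{2}\|v_p(t)\|^2$, use the comparison $\varphi_t(x_{\varepsilon(t)})\leq\varphi_t(x^*)$ together with the nonnegativity of $\tfrac{\varepsilon(t)}{2}\|x(t)\|^2$ for \eqref{keybb-00}, and invoke the $\varepsilon(t)$-strong convexity of $\varphi_t$ at its minimizer for \eqref{est:basic1}. Your treatment of the strong convergence conclusion via the triangle inequality and the classical Tikhonov property $x_{\varepsilon(t)}\to x^*$ is exactly what the paper relies on (stated earlier as \eqref{2b}), and you spell it out slightly more explicitly than the paper does.
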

 \begin{proof}
 
 $i)$
 According to the definition of $ \varphi_{t}$,  we have 
 \begin{equation*}
\begin{array}{lll}
f(x(t))-\min_{\mathcal H}f	& = &  \varphi_{t}(x(t))-\varphi_{t}(x^*)+\dfrac{\varepsilon(t)}{2}\left(\|x^*\|^{2}-\|x(t)\|^{2}\right)  \\ 
	& = & \left[\varphi_{t}(x(t))-\varphi_{t}(x_{\varepsilon(t)})\right]+\left[\underbrace{\varphi_{t}(x_{\varepsilon(t)})-\varphi_{t}(x^*)}_{\leq 0}\right]+\dfrac{\varepsilon(t)}{2}\left(\|x^*\|^{2}-\|x(t)\|^{2}\right)\\
	& \leq  &\varphi_{t}(x(t))-\varphi_{t}(x_{\varepsilon(t)})+\dfrac{\varepsilon(t)}{2}\|x^*\|^{2}.
\end{array}
\end{equation*}
By definition of $E_p(t)$ we have 
 \begin{equation}\label{E_phi}
  \varphi_{t}(x(t))-\varphi_{t}(x_{\varepsilon(t)}) \leq E_p(t)
  \end{equation}
 which, combined with the above inequality, gives \eqref{keybb-00}.

\smallskip

$ii)$
By the strong convexity of $\varphi_{t}$, and 
$x_{\varepsilon(t)}:= \argmin_{\cH}\varphi_{t}$, we have
$$
\varphi_{t}(x(t))-\varphi_{t}(x_{\varepsilon(t)}) \geq \frac{\varepsilon (t)}{2}  \|x(t) - x_{\varepsilon(t)}\|^2 .
$$
By combining the  inequality above with \eqref{E_phi}, we get
$$
E_p(t) \geq \frac{\varepsilon (t)}{2}  \|x(t) - x_{\varepsilon(t)}\|^2 ,
$$
which gives \eqref{est:basic1}.\qed
\end{proof}

\if
{
In order to explore the asymptotic behavior of the systems \eqref{1} and \eqref{1sans}, we  need the following function:
\begin{equation}\label{W}
W_p(t)=e^{\int^{t}_{t_0}\mu(s)ds}E_p(t)\;\text{ where }\mu(t)=-\dfrac{\dot{\varepsilon}(t)}{2\varepsilon(t)}+(\delta-\lambda)\sqrt{\varepsilon(t)}.
\end{equation} 

To compute 
\begin{equation}\label{W1}
\dfrac{d}{dt}W_p(t)=e^{\int^{t}_{t_0}\mu(s)ds}\left[\dfrac{d}{dt}E_p(t)+\mu(t)E_p(t)\right] ,
\end{equation}
}
\fi

\subsection{Preparatory results for Lyapunov analysis}

The parametrized viscosity curve $t\mapsto x_{\varepsilon(t)}$ plays a central role in the definition of $ E_p(\cdot)$, and therefore  in the Lyapunov analysis.
We review below some its topological and differential properties.

\subsubsection{Topological properties}  The following properties  are immediate consequences of  the classical properties of the Tikhonov regularization (see \cite{Att2} for a general overview of viscosity methods), and of  $\lim_{t\to +\infty} \varepsilon (t)=0$:
\begin{eqnarray}
&& \bullet \; \forall t\geq t_0 \;\; \;  \|x_{\varepsilon(t)}\|\leq \|x^{*}\|
\label{2a}  \vspace{6mm}\\
&& \bullet \; \lim_{t\rightarrow +\infty}\|x_{\varepsilon(t)}-x^{*}\|=0 \quad\hbox{where}\: x^{*}=\mbox{proj}_{\argmin f} 0.\label{2b}
\end{eqnarray}

\subsubsection{Differential properties}

To evaluate the terms $\dfrac{d}{dt}\left(\varphi_{t}(x_{\varepsilon(t)})\right)$ and $\dfrac{d}{dt}\left(x_{\varepsilon(t)}\right)$ which occur in $\dfrac{d}{dt}E_p(t)$,
 we use the differentiability properties of the viscosity curve $\epsilon \mapsto x_{\epsilon}= \argmin \left\lbrace f(\xi) + \frac{\epsilon}{2}\|\xi\|^2 \right\rbrace$.
 According to \cite{AttCom}, \cite{Hirstoaga}, \cite{Torralba}, the viscosity curve is  Lipschitz continuous on the compact intervals of $]0, +\infty[$. So it is absolutely continuous, and  almost everywhere differentiable.
Based on these properties we have the following lemma, which was established in \cite{ABCR}, and which we reproduce here for ease of reading.
\begin{lemma}\label{lem1}
The following properties are satisfied:
 \begin{itemize}
 	\item[$i)$] For each $t \geq t_0$, \; 
 	$\dfrac{d}{dt}\left(\varphi_{t}(x_{\varepsilon(t)})\right)=\frac{1}{2}\dot{\varepsilon}(t)\|x_{\varepsilon(t)}\|^{2}$.
 	
 	\smallskip
 	
 	\item[$ii)$] The function $t\mapsto x_{\varepsilon(t)}$ is 
 Lipschitz continuous on the compact intervals of $]t_0, +\infty[$, hence almost everywhere differentiable, and the following inequality holds:  for almost every $t \geq t_0$
 	 $$\left\|\dfrac{d}{dt}\left(x_{\varepsilon(t)}\right)\right\|^{2} \leq -\dfrac{\dot{\varepsilon}(t)}{\varepsilon(t)} \left\langle \dfrac{d}{dt}\left(x_{\varepsilon(t)}\right), x_{\varepsilon(t)}\right\rangle.$$
 \end{itemize}
 	Therefore, for almost every $t \geq t_0$
 	$$\left\|\dfrac{d}{dt}\left(x_{\varepsilon(t)}\right)\right\|\leq -\dfrac{\dot{\varepsilon}(t)}{\varepsilon(t)} \|x_{\varepsilon(t)}\|. $$ 
\end{lemma}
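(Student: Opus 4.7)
I would avoid directly invoking the chain rule on $t \mapsto \varphi_t(x_{\varepsilon(t)})$, since the curve $t \mapsto x_{\varepsilon(t)}$ is only differentiable almost everywhere. Instead, I would use an envelope-type squeeze. For any admissible $s, t$, the minimizing properties $x_{\varepsilon(t)} = \argmin \varphi_t$ and $x_{\varepsilon(s)} = \argmin \varphi_s$, together with the identity $\varphi_t - \varphi_s = \tfrac{\varepsilon(t)-\varepsilon(s)}{2}\|\cdot\|^{2}$, give the two-sided sandwich
\begin{equation*}
\frac{\varepsilon(t)-\varepsilon(s)}{2}\|x_{\varepsilon(t)}\|^{2} \;\leq\; \varphi_t(x_{\varepsilon(t)}) - \varphi_s(x_{\varepsilon(s)}) \;\leq\; \frac{\varepsilon(t)-\varepsilon(s)}{2}\|x_{\varepsilon(s)}\|^{2}.
\end{equation*}
Dividing by $t - s$ and letting $s \to t$, the continuity of the viscosity curve (standard Tikhonov theory, see \cite{AttCom}) forces both extremes to converge to $\tfrac12 \dot\varepsilon(t)\|x_{\varepsilon(t)}\|^{2}$, which yields (i) everywhere.

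\textbf{Plan for (ii).} I would write the optimality condition \eqref{opt} at times $s$ and $t$ and subtract, obtaining
\begin{equation*}
\nabla f(x_{\varepsilon(s)}) - \nabla f(x_{\varepsilon(t)}) + \varepsilon(s)\,x_{\varepsilon(s)} - \varepsilon(t)\,x_{\varepsilon(t)} = 0.
\end{equation*}
Pairing with $x_{\varepsilon(s)} - x_{\varepsilon(t)}$, using monotonicity of $\nabla f$, and applying the algebraic decomposition $\varepsilon(s)x_{\varepsilon(s)} - \varepsilon(t)x_{\varepsilon(t)} = \varepsilon(s)(x_{\varepsilon(s)}-x_{\varepsilon(t)}) + (\varepsilon(s)-\varepsilon(t))x_{\varepsilon(t)}$ produces the key quadratic estimate
\begin{equation*}
(\star) \qquad \varepsilon(s)\,\|x_{\varepsilon(s)}-x_{\varepsilon(t)}\|^{2} \;\leq\; \bigl(\varepsilon(t)-\varepsilon(s)\bigr)\,\langle x_{\varepsilon(t)},\, x_{\varepsilon(s)}-x_{\varepsilon(t)}\rangle.
\end{equation*}
Applying Cauchy--Schwarz to $(\star)$ and using the a priori bound $\|x_{\varepsilon(t)}\|\leq \|x^{*}\|$ from \eqref{2a}, I obtain $\|x_{\varepsilon(s)} - x_{\varepsilon(t)}\| \leq |\varepsilon(t)-\varepsilon(s)|\,\|x^{*}\|/\varepsilon(s)$. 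Since $\varepsilon$ is of class $\mathcal{C}^1$ and bounded away from zero on any compact subinterval of $]t_0,+\infty[$, this establishes local Lipschitz continuity and hence a.e.\ differentiability. To extract the displayed inequality, divide $(\star)$ by $(t-s)^{2}$ and pass to the limit $s \to t$: the ratios $(x_{\varepsilon(s)}-x_{\varepsilon(t)})/(t-s)$ tend to $-\tfrac{d}{dt}x_{\varepsilon(t)}$ while $(\varepsilon(t)-\varepsilon(s))/(t-s) \to \dot\varepsilon(t)$, giving
\begin{equation*}
\varepsilon(t)\,\Bigl\|\tfrac{d}{dt}x_{\varepsilon(t)}\Bigr\|^{2} \;\leq\; -\dot\varepsilon(t)\,\Bigl\langle \tfrac{d}{dt}x_{\varepsilon(t)},\, x_{\varepsilon(t)}\Bigr\rangle,
\end{equation*}
and dividing by $\varepsilon(t) > 0$ yields the stated inequality. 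The final norm bound then follows immediately from Cauchy--Schwarz applied to the inner product on the right.

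\textbf{Main obstacle.} The delicate point is the bookkeeping of signs when passing to the limit. Since $\varepsilon$ is nonincreasing we have $\dot\varepsilon(t) \leq 0$, so both sides of the limiting inequality must be nonnegative; this forces $\langle x_{\varepsilon(t)}, \tfrac{d}{dt}x_{\varepsilon(t)}\rangle \geq 0$ a.e., which is consistent with the classical fact that $t \mapsto \|x_{\varepsilon(t)}\|$ is nondecreasing as $\varepsilon(t)$ decreases. Verifying that the passage $s \to t$ in $(\star)$ produces the sign-correct limiting inequality (rather than its reverse) is the only step I would carry out with particular care.
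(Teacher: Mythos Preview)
Your proposal is correct. For part (ii) you follow essentially the same route as the paper: subtract the optimality conditions at two times, pair with the increment $x_{\varepsilon(s)}-x_{\varepsilon(t)}$, use monotonicity of $\nabla f$, divide by the square of the time increment, and pass to the limit. The paper cites external references for the local Lipschitz continuity of the viscosity curve, whereas you extract it directly from $(\star)$ via Cauchy--Schwarz and the bound $\|x_{\varepsilon(t)}\|\leq\|x^*\|$, a small gain in self-containedness. Your sign-bookkeeping worry is unnecessary: since you divide $(\star)$ by the positive quantity $(t-s)^2$, the inequality direction is preserved regardless of the sign of $t-s$.

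For part (i) you take a genuinely different path. The paper identifies $\varphi_t(x_{\varepsilon(t)})=f_{1/\varepsilon(t)}(0)$ as a Moreau envelope value and invokes the known formula $\tfrac{d}{d\theta}f_\theta(x)=-\tfrac12\|\nabla f_\theta(x)\|^2$ together with $\nabla f_{1/\varepsilon(t)}(0)=-\varepsilon(t)x_{\varepsilon(t)}$; the chain rule in $\theta=1/\varepsilon(t)$ then gives the result. Your envelope squeeze avoids the Moreau calculus entirely, needing only the minimality of $x_{\varepsilon(t)}$, the $\mathcal C^1$ regularity of $\varepsilon$, and continuity of the viscosity curve. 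The paper's argument is shorter if the Moreau derivative formula is already on the shelf; yours is more elementary and self-contained, and makes transparent why the identity holds for \emph{every} $t\geq t_0$ rather than merely almost everywhere.
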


\begin{proof} 

 $i)$  We use some classical differentiability properties of the Moreau envelope. We have    
	$$\varphi_{t}(x_{\varepsilon(t)})=\inf_{\xi\in H} \left\lbrace f(\xi)+\frac{\varepsilon(t)}{2}\|\xi-0\|^{2}\right\rbrace=f_{\frac{1}{\varepsilon(t)}}(0),$$ 
where, for any $\theta >0$, the Moreau envelope $f_{\theta}: \cH \to \R$ is  defined by 
\begin{equation}\label{def:prox-b}
f_{\theta} (x) = \min_{\xi \in \cH} \left\lbrace f (\xi) + \frac{1}{2 \theta} \| x - \xi\| ^2   \right\rbrace .
\end{equation}
Recall that infimum in the above expression is achieved at the unique point
$\prox_{\theta f}(x)$, \ie 
\begin{equation}\label{def:prox-bb}
f_{\theta} (x) = f (\prox_{\theta f}(x)) + \frac{1}{2 \theta} \| x - \prox_{\theta f}(x)\| ^2 . 
\end{equation}
One can consult \cite[section 12.4]{BC} for more details on the Moreau envelope.
	 Since  $ \dfrac{d}{d\theta}f_{\theta}(x)=-\frac{1}{2}\|\nabla f_{\theta}(x)\|^{2}, $ (see  \cite[Appendix, Lemma 3]{ABCR}),    we have:
	$$ \dfrac{d}{dt}f_{\theta(t)}(x)=-\frac{\dot{\theta}(t)}{2}\|\nabla f_{\theta(t)}(x)\|^{2} .$$ 
	Therefore, 
	\begin{equation}\label{4}
	 \dfrac{d}{dt} \varphi_{t}(x_{\varepsilon(t)})=\dfrac{d}{dt}\left(f_{\frac{1}{\varepsilon(t)}}(0)\right)=\frac{1}{2}\dfrac{\dot{\varepsilon}(t)}{\varepsilon^{2}(t)}\|\nabla f_{\frac{1}{\varepsilon(t)}}(0)\|^{2}.
	\end{equation}
On the other hand, we have 
$$ \nabla\varphi_{t}(x_{\varepsilon(t)})=0\Longleftrightarrow \nabla f(x_{\varepsilon(t)})+\varepsilon(t)x_{\varepsilon(t)}=0\Longleftrightarrow x_{\varepsilon(t)}= 
\prox_{ \frac1{\varepsilon (t)} f}(0).
$$	
Since  $\nabla f_{\frac{1}{\varepsilon(t)}}(0)=\varepsilon(t)\left(0-  \prox_{ \frac1{\varepsilon (t)} f}(0)\right), $  we get  
$ \nabla f_{\frac{1}{\varepsilon(t)}}(0)=-\varepsilon(t)x_{\varepsilon(t)} $.  This combined with \eqref{4} gives
$$ \dfrac{d}{dt} \varphi_{t}(x_{\varepsilon(t)})=\frac{1}{2}\dot{\varepsilon}(t)\|x_{\varepsilon(t)}\|^{2}.$$ 
\item[$ii)$] We have 
$$ -\varepsilon(t)x_{\varepsilon(t)}=\nabla f(x_{\varepsilon(t)})\quad\hbox{and}\quad -\varepsilon(t+h)x_{\varepsilon(t+h)}=\nabla f(x_{\varepsilon(t+h)}). $$
According to the monotonicity  of $\nabla f,$ we have
$$ \langle \varepsilon(t)x_{\varepsilon(t)}-\varepsilon(t+h)x_{\varepsilon(t+h)},x_{\varepsilon(t+h)}- x_{\varepsilon(t)}  \rangle \geq 0 ,$$
which implies 
$$ -\varepsilon(t)\|x_{\varepsilon(t+h)}- x_{\varepsilon(t)}\|^{2} + \left(\varepsilon(t)-\varepsilon(t+h)\right) \langle x_{\varepsilon(t+h)},x_{\varepsilon(t+h)}- x_{\varepsilon(t)}  \rangle \geq 0 .$$
After division by $h^{2},$ we obtain 
$$ \dfrac{ \left(\varepsilon(t)-\varepsilon(t+h)\right)}{h} \left\langle x_{\varepsilon(t+h)},\dfrac{x_{\varepsilon(t+h)}- x_{\varepsilon(t)}}{h} \right \rangle \geq \varepsilon(t)\left\|\dfrac{x_{\varepsilon(t+h)}- x_{\varepsilon(t)}}{h}\right\|^{2} .$$
We now rely on the differentiability properties of the viscosity curve $\epsilon \mapsto x_{\epsilon}$, which have been recalled above. It is  Lipschitz continuous on the compact intervals of $]0, +\infty[$, so almost everywhere differentiable. Therefore, the mapping $t \mapsto x_{\epsilon (t)}$ satisfies the same differentiability properties.
 By letting $h \rightarrow 0,$ we obtain that, for almost every $t\geq t_0$
 $$-\dot{\varepsilon}(t) \left\langle x_{\varepsilon(t)},\dfrac{d}{dt}x_{\varepsilon(t)} \right \rangle \geq \varepsilon(t)\left\|\dfrac{d}{dt}x_{\varepsilon(t)} \right\|^{2} ,$$
which gives the claim. The last statement follows from  Cauchy-Schwarz inequality.\qed
\end{proof}

\subsection{Lyapunov analysis of (TRISH): main theorem and its proof}
Take $p\in [0,1]$. In the Lyapunov analysis of
\begin{equation}\label{1principal-bis}
\ddot{x}(t)+\delta\sqrt{\varepsilon(t)}\dot{x}(t)+\beta\dfrac{d}{dt}\left[\nabla \varphi_{t}(x(t))+(p-1)\varepsilon(t)x(t)\right]+\nabla\varphi_{t}(x(t))=0,
\end{equation}
we  assume that the Tikhonov regularization parameter $\varepsilon(\cdot)$ satisfies the following growth condition.
\begin{definition}
 \textit{The Tikhonov regularization parameter
$t\mapsto \varepsilon (t)$ satisfies the condition $( \mathcal{H}_p)$ if there exists $a>1,$  $c>2$, $\lambda >0$ and $t_1 \geq t_0$ such that for all $t \geq t_1 ,$}
	$$( \mathcal{H}_p) \qquad \dfrac{d}{dt}\left(\dfrac{1}{\sqrt{\varepsilon(t)}}\right) \leq 
	\min\left(2\lambda-\delta\; , \; \frac{1}{2}\left(\delta-\frac{a+1}{a}\lambda\right)\right),\quad \mbox{and}\quad \delta\beta\leq\dfrac{1}{\sqrt{\varepsilon(t)}},
	$$
\textit{where, in the above inequality, it is supposed that the parameter $\lambda$ is such that $   \frac{\delta}{2} <     \lambda < \delta$ and satisfies}
	
	\medskip

\noindent $\bullet$\; 
	$ For \; p \notin [0,\frac{1}{2}]  \mbox{ and }\; c>\max\left(2,\frac{1+\sqrt{2(1-p)}}{2p-1}\right)$
	 
\hspace{1cm} $ \frac{\delta}{2}< \lambda < \min\left(\frac{a}{a+1}\delta, \frac{\delta+\sqrt{\delta^{2}-4(1-p)}}{2}\right)\; \mbox{ when } 2\sqrt{1-p} <\delta \leq \sqrt{2}-\frac{1}{c},$ 

\hspace{1cm} $ 
	\frac12\left(\delta+\frac{1}{c}+\sqrt{(\delta+\frac{1}{c})^{2}-2}\right)
	< \lambda < \min\left(\frac{a}{a+1}\delta, \frac{\delta+\sqrt{\delta^{2}-4(1-p)}}{2}\right) \,\mbox{ when } \delta > \sqrt{2}-\frac{1}{c}.$

\noindent $\bullet$   For  $p\in [0,\frac{1}{2}] \quad  \frac12\left(\delta+\frac{1}{c}+\sqrt{(\delta+\frac{1}{c})^{2}-2}\right)
		< \lambda < \min\left(\frac{a}{a+1}\delta, \frac{\delta+\sqrt{\delta^{2}-4(1-p)}}{2}\right) \;  \mbox{ when } \delta > 2\sqrt{(1-p)}.\,$
\end{definition}
\begin{remark}
Integrating the differential inequality $(\mathcal{H}_p)$ shows that  the damping coefficient in \eqref{1principal-bis} (which is proportional to $\sqrt{\e(t)}$) must be greater than or equal to $C/t$ for some positive constant $C$.
 This is consistent with the theory of inertial gradient systems with time-dependent viscosity coefficient, which shows that the asymptotic optimization property is valid provided that the integral of the viscous damping coefficient over $[t_0, +\infty[$ be infinite,  \cite{AC10}, \cite{CEG}. See also \cite{ABCR}, \cite{AL} where a similar growth condition on the Tikhonov parameter is considered.
\end{remark}  
For ease of reading, let us recall  the functions that enter the Lyapunov analysis:
\begin{eqnarray}
&& E_p(t)\eqdef 
\left(\varphi_{t}(x(t))-\varphi_{t}(x_{\varepsilon(t)})\right) +\dfrac{1}{2}\|v_p(t)\|^{2} \label{3bis} \\
&& v_p(t)\eqdef \lambda\sqrt{\varepsilon(t)}\left(x(t)-x_{\varepsilon(t)}\right)+\dot{x}(t)+\beta\left[\nabla \varphi_{t}(x(t))+(p-1)\varepsilon(t)x(t)\right] \label{3bbis}\\
&&\mu(t) :=-\dfrac{\dot{\varepsilon}(t)}{2\varepsilon(t)}+(\delta-\lambda)\sqrt{\varepsilon(t)} \\
&& \gamma(t) \eqdef \exp\left(\displaystyle \int_{t_1}^{t} \mu(s)ds\right). \label{def:mu_gamma}
 \end{eqnarray}
We can now state our main convergence result.
\begin{theorem}\label{strong-conv-thm-b}
	Let  $x(\cdot): [t_0, + \infty[ \to \cH$ be a solution trajectory of the system \eqref{1principal-bis}. Take $\delta>2\sqrt{(1-p)}$.	Suppose that 
 $\varepsilon (\cdot)$ satisfies the condition $( \mathcal{H}_p)$.	Then,  the following properties are satisfied:  for all $ t\geq t_1$
	\begin{eqnarray}
	&& E_p(t)\leq \dfrac{\|x^{*}\|^{2}}{2\gamma(t)}\displaystyle \int_{t_1}^{t}G(s)\gamma(s) ds+ \dfrac{\gamma(t_1)E_p(t_1)}{\gamma(t)}   ,  \label{Lyap-basic1} \\
	&& \int_{t_1}^{t}\Vert\nabla\varphi_s (x(s))\Vert^2 ds\leq  \frac{2\delta}{\beta(\delta-\lambda)}E_{p}(t_1 )+\frac{\delta\Vert x^{*}\Vert^2 }{\beta(\delta-\lambda)}\int_{t_1}^{t}G(s)ds, \label{Lyap-basic11}
	\end{eqnarray}
where 
\begin{equation} \label{def:G} 
G(t)=(\lambda c+2a)\lambda\dfrac{\dot{\varepsilon}^{2}(t)}{\varepsilon^{\frac{3}{2}}(t)}-\dot{\varepsilon}(t)+ (1-p)\beta \lambda(\delta-\lambda)\varepsilon^{2}(t) .
	\end{equation}	
\end{theorem}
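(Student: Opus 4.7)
The strategy is to establish the first-order differential inequality \eqref{est:basic2} announced before the theorem, and then to integrate it in two different ways to recover \eqref{Lyap-basic1} and \eqref{Lyap-basic11}.

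\emph{Step 1: derivative of the energy.} I would differentiate $E_p(t)=[\varphi_t(x(t))-\varphi_t(x_{\varepsilon(t)})]+\tfrac{1}{2}\|v_p(t)\|^2$ termwise. The chain rule combined with Lemma \ref{lem1}(i) gives
\[
\tfrac{d}{dt}\bigl[\varphi_t(x(t))-\varphi_t(x_{\varepsilon(t)})\bigr]\;=\;\langle\nabla\varphi_t(x(t)),\dot x(t)\rangle+\tfrac{\dot\varepsilon(t)}{2}\bigl(\|x(t)\|^2-\|x_{\varepsilon(t)}\|^2\bigr).
\]
For $\tfrac{1}{2}\tfrac{d}{dt}\|v_p\|^2=\langle v_p,\dot v_p\rangle$, I would use \eqref{1principal-bis} rewritten as $\ddot x+\beta\tfrac{d}{dt}[\nabla\varphi_t(x)+(p-1)\varepsilon x]=-\delta\sqrt{\varepsilon}\,\dot x-\nabla\varphi_t(x)$ to substitute inside $\dot v_p$, obtaining
\[
\dot v_p(t)\;=\;\tfrac{\lambda\dot\varepsilon(t)}{2\sqrt{\varepsilon(t)}}(x-x_{\varepsilon(t)})\;-\;\lambda\sqrt{\varepsilon(t)}\,\tfrac{d}{dt}x_{\varepsilon(t)}\;+\;(\lambda-\delta)\sqrt{\varepsilon(t)}\,\dot x\;-\;\nabla\varphi_t(x).
\]
Expanding $\langle v_p,\dot v_p\rangle$ with $v_p$ as in \eqref{3bbis} produces a long list of signed and unsigned cross terms.

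\emph{Step 2: the differential inequality \eqref{est:basic2}.} Each unsigned cross term would be controlled by Young's inequality with a tailored weight: the weight $a>1$ absorbs terms containing $\tfrac{d}{dt}x_{\varepsilon(t)}$ (bounded via Lemma \ref{lem1}(ii) and \eqref{2a} by $|\dot\varepsilon|/\varepsilon\cdot\|x^*\|$), while the weight $c>2$ absorbs the Tikhonov cross terms involving $\varepsilon(t)x(t)$. The $\varepsilon(t)$-strong convexity of $\varphi_t$ around $x_{\varepsilon(t)}$ yields $\varphi_t(x)-\varphi_t(x_{\varepsilon})\geq\tfrac{\varepsilon(t)}{2}\|x-x_{\varepsilon}\|^2$; pairing this with the dissipative contribution $(\delta-\lambda)\sqrt{\varepsilon(t)}\,\langle v_p,x-x_{\varepsilon}\rangle$ rebuilds the $(\delta-\lambda)\sqrt{\varepsilon(t)}\,E_p(t)$ piece of $\mu(t)E_p(t)$, and the residual $\dot\varepsilon/\varepsilon$ factor assembles the $-\dot\varepsilon(t)/(2\varepsilon(t))$ piece. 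The gradient term $\tfrac{\beta(\delta-\lambda)}{2\delta}\|\nabla\varphi_t(x)\|^2$ is extracted from $\langle v_p,-\nabla\varphi_t(x)\rangle$ by a further Young splitting against the $\beta$-dependent block of $\|v_p\|^2$. The condition $(\mathcal H_p)$, including its regime-dependent constraints on $\lambda$, is precisely what forces the residual quadratic form in the variables $\bigl(\sqrt{\varepsilon(t)}(x-x_{\varepsilon}),\dot x,\nabla\varphi_t(x)\bigr)$ to be nonnegative, leaving only $\tfrac{\|x^*\|^2}{2}G(t)$ on the right through \eqref{2a}.

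\emph{Step 3: integration.} With $\gamma$ from \eqref{def:mu_gamma}, inequality \eqref{est:basic2} rewrites as
\[
\tfrac{d}{dt}\bigl(\gamma(t)E_p(t)\bigr)+\tfrac{\beta(\delta-\lambda)}{2\delta}\gamma(t)\|\nabla\varphi_t(x(t))\|^2\;\leq\;\tfrac{\|x^*\|^2}{2}\gamma(t)G(t).
\]
Dropping the nonnegative gradient term and integrating over $[t_1,t]$ yields \eqref{Lyap-basic1}. For \eqref{Lyap-basic11}, I would instead drop the nonnegative $\mu(t)E_p(t)$ term in \eqref{est:basic2}, integrate over $[t_1,t]$, use $E_p(t)\geq 0$, and multiply by $2\delta/[\beta(\delta-\lambda)]$.

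\emph{Main obstacle.} The genuinely delicate point is Step 2: organising the residual quadratic form so that it is nonnegative. The three-regime description of $(\mathcal H_p)$, depending on whether $p\leq 1/2$ and on the position of $\delta$ relative to $\sqrt{2}-1/c$, mirrors the different ways in which this form factors as a sum of squares, governed by the discriminants $\delta^{2}-4(1-p)$ and $(\delta+1/c)^{2}-2$. Once the correct Young weights and range of $\lambda$ are fixed, Steps 1 and 3 are direct computations.
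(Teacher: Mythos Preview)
Your outline is correct and follows the paper's route: differentiate $E_p$, expand $\langle v_p,\dot v_p\rangle$, add $\mu(t)E_p(t)$ explicitly, control the cross terms via strong convexity and Young's inequality (the paper uses the weight $a$ on the pairings $\langle\dot x,\tfrac{d}{dt}x_{\varepsilon(t)}\rangle$ and $\langle A_p,\tfrac{d}{dt}x_{\varepsilon(t)}\rangle$, and the weight $b=c\lambda/2$ on $\langle x-x_{\varepsilon(t)},\tfrac{d}{dt}x_{\varepsilon(t)}\rangle$, rather than on ``Tikhonov cross terms involving $\varepsilon(t)x(t)$''), and then checks under $(\mathcal H_p)$ that six residual coefficients---labelled $A,B,C,D,\Delta_1,\Delta_2$---are nonpositive, which yields \eqref{est:basic2}. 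Your Step~3 is exactly how the paper obtains \eqref{Lyap-basic1} and \eqref{Lyap-basic11}.
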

\begin{proof} Since the mapping $t \mapsto x_{\epsilon (t)}$ is absolutely continuous (indeed locally Lipschitz) the classical derivation  chain rule can be applied to compute the derivative of the function $E_p(\cdot)$, see \cite[section VIII.2]{Bre2}.
According to Lemma \ref{lem1} $i)$,  for almost all $t\geq t_0$
the derivative of  $E_p(\cdot)$  is given by:
	\begin{equation}\label{6}
	\begin{array}{lll}
	\dot{E_p}(t)&=&  \langle \nabla\varphi_{t}(x(t)),\dot{x}(t)\rangle+\dfrac{1}{2}\dot{\varepsilon}(t)\|x(t)\|^{2}-\dfrac{1}{2}\dot{\varepsilon}(t)\|x_{\varepsilon(t)}\|^{2}+ \langle\dot{v_p}(t),v_p(t)\rangle.
	\end{array} 
	\end{equation}
According to the definition of $v_p$ (see \eqref{3bbis}), and the  equation \eqref{1principal-bis}, the derivation of  $v_p$ gives
	$$
	\begin{array}{lll}
	\dot{v_p}(t)&=&   \dfrac{\lambda}{2}\dfrac{\dot{\varepsilon}(t)}{\sqrt{\varepsilon(t)}}\left(x(t)-x_{\varepsilon(t)}\right)+ \lambda\sqrt{\varepsilon(t)}\dot{x}(t)-\lambda\sqrt{\varepsilon(t)}\dfrac{d}{dt}x_{\varepsilon(t)}+\ddot{x}(t)\\
	&+&\beta\dfrac{d}{dt}\left(\nabla \varphi_{t}(x(t))+(p-1)\varepsilon(t)x(t)\right) \\ 
	& = & \dfrac{\lambda}{2}\dfrac{\dot{\varepsilon}(t)}{\sqrt{\varepsilon(t)}}\left(x(t)-x_{\varepsilon(t)}\right)+ \left(\lambda-\delta\right)\sqrt{\varepsilon(t)}\dot{x}(t)-\lambda\sqrt{\varepsilon(t)}\dfrac{d}{dt}x_{\varepsilon(t)}-\nabla\varphi_{t}(x(t)).
	\end{array}
	 $$
Let us write  shortly  $A_p (t) \eqdef \nabla \varphi_{t}(x(t))+(p-1)\varepsilon(t)x(t)$. We get
	\begin{equation}\label{7}
	\begin{array}{lll}
	\langle\dot{v}_p(t),v_p(t)\rangle&=&  \left\langle \dfrac{\lambda}{2}\dfrac{\dot{\varepsilon}(t)}{\sqrt{\varepsilon(t)}}\left(x(t)-x_{\varepsilon(t)}\right)+ \left(\lambda-\delta\right)\sqrt{\varepsilon(t)}\dot{x}(t)-\lambda\sqrt{\varepsilon(t)}\dfrac{d}{dt}x_{\varepsilon(t)}-\nabla\varphi_{t}(x(t)),v_p(t)\right\rangle   \\ 
	&=	&\dfrac{\lambda^{2}}{2}\dot{\varepsilon}(t) \|x(t)-x_{\varepsilon(t)}\|^{2}+\lambda\left(\dfrac{\dot{\varepsilon}(t)}{2\sqrt{\varepsilon(t)}}+\left(\lambda-\delta\right)\varepsilon(t)\right)\langle x(t)-x_{\varepsilon(t)},\dot{x}(t)\rangle\\
	& & +\underbrace{\left(\lambda-\delta\right)\sqrt{\varepsilon(t)}\|\dot{x}(t)\|^{2}+\beta\left(\lambda-\delta\right)\sqrt{\varepsilon(t)}\langle A_p (t),\dot{x}(t)\rangle}_{C_0}\\
	& & +\lambda\underbrace{\left(\dfrac{\beta\dot{\varepsilon}(t)}{2\sqrt{\varepsilon(t)}}-\sqrt{\varepsilon(t)}\right)\langle\nabla\varphi_{t}(x(t)),x(t)-x_{\varepsilon(t)})\rangle}_{=D_0} -\langle\nabla\varphi_{t}(x(t)),\dot{x}(t)\rangle,\\
	&&-\beta\langle\nabla \varphi_{t}(x(t)),A_p (t)\rangle-\beta \lambda\sqrt{\varepsilon(t)}\langle A_p (t),\dfrac{d}{dt}x_{\varepsilon(t)}\rangle-\lambda^{2}\varepsilon(t)\langle\dfrac{d}{dt} x_{\varepsilon(t)},x(t)-x_{\varepsilon(t)}\rangle\\
	&&-\lambda\sqrt{\varepsilon(t)}\langle\dfrac{d}{dt} x_{\varepsilon(t)},\dot{x}(t)\rangle+\dfrac{(p-1)\lambda\beta}{2}\dot{\varepsilon}(t)\sqrt{\varepsilon(t)}\langle x(t)-x_{\varepsilon(t)},x(t)\rangle .
	\end{array} 
	\end{equation}
	Since $\varphi_{t}$ is $\varepsilon (t)$-strongly convex, we have
	$$
	\varphi_{t}(x_{\varepsilon(t)})-\varphi_{t}(x(t))\geq \left\langle \nabla\varphi_{t}(x(t)),x_{\varepsilon(t)}-x(t) \right\rangle+\frac{\varepsilon(t)}{2} \| x(t)-x_{\varepsilon(t)}\|^{2}.
	$$
	Recall that $\varepsilon(t)$ is nonincreasing, \ie $\dot{\varepsilon}(t)\leq 0$. Therefore,  by  using the above estimation, we get  
	\begin{equation}\label{8}
	\begin{array}{lll}
	D_0	& \leq  & \left(\dfrac{\beta\dot{\varepsilon}(t)}{2\sqrt{\varepsilon(t)}}-\sqrt{\varepsilon(t)}\right)\left(\varphi_{t}(x(t))-\varphi_{t}(x_{\varepsilon(t)})\right)+\dfrac{1}{2}\left(\dfrac{\beta\dot{\varepsilon}(t)}{2\sqrt{\varepsilon(t)}}-\sqrt{\varepsilon(t)}\right)\varepsilon(t)\| x(t)-x_{\varepsilon(t)}\|^{2} .
	\end{array} 
	\end{equation}
For all $a>1$ we have the following elementary inequalities 
\begin{eqnarray}
	&& -\lambda\sqrt{\varepsilon(t)}\langle\dfrac{d}{dt} x_{\varepsilon(t)},\dot{x}(t)\rangle \leq\dfrac{\lambda\sqrt{\varepsilon(t)}}{2a}\|\dot{x}(t)\|^{2} +\dfrac{a\lambda\sqrt{\varepsilon(t)}}{2}\|\dfrac{d}{dt} x_{\varepsilon(t)}\|^{2}
	\label{parameter_a} \\
	&&- \beta \lambda\sqrt{\varepsilon(t)}\langle A_p (t),\dfrac{d}{dt}x_{\varepsilon(t)}\rangle\leq \dfrac{\lambda\beta^{2}\sqrt{\varepsilon(t)}}{2a}\|A_p (t)\|^{2} +\dfrac{ a\lambda\sqrt{\varepsilon(t)}}{2}\|\dfrac{d}{dt} x_{\varepsilon(t)}\|^{2}.\label{parameter_a2}
	\end{eqnarray}
Similarly, for all $b>0$,
\begin{equation}\label{parameter_b}
-\lambda^{2}\varepsilon(t)\langle\dfrac{d}{dt} x_{\varepsilon(t)},x(t)-x_{\varepsilon(t)}\rangle\leq \dfrac{b\lambda\sqrt{\varepsilon(t)}}{2}\|\dfrac{d}{dt} x_{\varepsilon(t)}\|^{2} +\dfrac{\lambda^{3}\varepsilon^{\frac{3}{2}}(t)}{2b}\|x(t)-x_{\varepsilon(t)}\|^{2}.
\end{equation}
The coefficients $a$ and $b$ will be adjusted later, conveniently. Note that 
 \begin{eqnarray}
C_0  &=&   \left(\lambda-\delta \right)\sqrt{\varepsilon(t)}\left(\|\dot{x}(t)\|^{2}+\beta\langle A_p (t),\dot{x}(t)\rangle \right) \nonumber \\ 
	&=&  \dfrac{\left(\lambda-\delta \right)\sqrt{\varepsilon(t)}}{2} \left(\|\dot{x}(t)\|^{2}+\|\dot{x}(t)+\beta A_p (t)\|^{2}-\beta^{2}\|A_p (t)\|^{2} \right). \label{c0}
	\\
-\beta\langle\nabla \varphi_{t}(x(t)),A_p (t)\rangle &=&- \dfrac{\beta}{2}\left[\|\nabla \varphi_{t}(x(t))\|^{2}+\|A_p (t)\|^{2}-\|\nabla \varphi_{t}(x(t))-A_p (t)\|^{2}\right] \nonumber \\
	&=&   -\dfrac{\beta}{2}\left[\|\nabla \varphi_{t}(x(t))\|^{2}
	+ \|A_p (t)\|^{2}-(p-1)^{2}\varepsilon^{2}(t)\|x(t)\|^{2}\right] .
	\label{c1}
	\end{eqnarray}
	By combining the inequalities \eqref{8}-\eqref{parameter_a}-	\eqref{parameter_a2}-\eqref{parameter_b}-\eqref{c0}-\eqref{c1} with \eqref{7}, we obtain
	\begin{equation}\label{7b}
	\begin{array}{lll}
	\langle\dot{v}_p(t),v_p(t)\rangle 
	&\leq&  \lambda\left(\dfrac{\beta\dot{\varepsilon}(t)}{2\sqrt{\varepsilon(t)}}-\sqrt{\varepsilon(t)}\right)\left(\varphi_{t}(x(t))-\varphi_{t}(x_{\varepsilon(t)})\right)\\
	&&+\lambda\left(\dfrac{\dot{\varepsilon}(t)}{2\sqrt{\varepsilon(t)}}+\left(\lambda-\delta\right)\varepsilon(t)\right)\langle x(t)-x_{\varepsilon(t)},\dot{x}(t)\rangle\\	
	&&+\left[\dfrac{\lambda^{2}}{2}\dot{\varepsilon}(t)+\dfrac{\lambda\varepsilon(t)}{2}\left(\dfrac{\beta\dot{\varepsilon}(t)}{2\sqrt{\varepsilon(t)}}-\sqrt{\varepsilon(t)}\right)+\dfrac{\lambda^{3}\varepsilon^{\frac{3}{2}}(t)}{2b}\right]\|x(t)-x_{\varepsilon(t)}\|^{2}\\
	&&+\dfrac{1}{2}\left(\left(1+\frac{1}{a}\right)\lambda-\delta\right)\sqrt{\varepsilon(t)}\|\dot{x}(t)\|^{2}+\dfrac{1}{2}\left(\lambda-\delta\right)\sqrt{\varepsilon(t)}\|\dot{x}(t)
	+\beta A_p (t)\|^{2}\\
	&&+\dfrac{1}{2}\left(2a+b\right)\lambda\sqrt{\varepsilon(t)}\|\dfrac{d}{dt}x_{\varepsilon(t)}\|^{2}+\dfrac{(p-1)\lambda\beta}{2}\dot{\varepsilon}(t)\sqrt{\varepsilon(t)}\langle x(t)-x_{\varepsilon(t)},x(t)\rangle\\
	&&+ \dfrac{\beta}{2}\left[\dfrac{\beta \lambda\sqrt{\varepsilon(t)}}{a}-1-\beta(\lambda-\delta)\sqrt{\varepsilon(t)})\right]\|A_p (t)\|^2-\langle\nabla\varphi_{t}(x(t)),\dot{x}(t)\rangle \\
	&&+\dfrac{\beta}{2}(p-1)^{2}\varepsilon^{2}(t)\|x(t)\|^{2}
-\dfrac{\beta}{2}\|\nabla \varphi_{t}(x(t))\|^{2} .
	\end{array} 
	\end{equation}
Combining  \eqref{7b} with \eqref{6}, the  terms 
$\langle\nabla\varphi_{t}(x(t)),\dot{x}(t)\rangle$ cancel each other out, which gives
	\begin{eqnarray}\label{9}
	\dot{E}_p(t)&\leq  & \lambda\left(\dfrac{\beta\dot{\varepsilon}(t)}{2\sqrt{\varepsilon(t)}}-\sqrt{\varepsilon(t)}\right)\left(\varphi_{t}(x(t))-\varphi_{t}(x_{\varepsilon(t)})\right)+\dfrac{1}{2}\left[\dot{\varepsilon}(t)+\beta(p-1)^{2}\varepsilon^{2}(t)
	\right]\|x(t)\|^{2}  \nonumber \\
	&& -\dfrac{1}{2}\dot{\varepsilon}(t)\|x_{\varepsilon(t)}\|^{2} +\lambda\left(\dfrac{\dot{\varepsilon}(t)}{2\sqrt{\varepsilon(t)}}+\left(\lambda-\delta\right)\varepsilon(t)\right)\langle x(t)-x_{\varepsilon(t)},\dot{x}(t)\rangle \nonumber\\
	&&+\left[\dfrac{\lambda^{2}}{2}\dot{\varepsilon}(t)+\dfrac{\lambda\varepsilon(t)}{2}\left(\dfrac{\beta\dot{\varepsilon}(t)}{2\sqrt{\varepsilon(t)}}-\sqrt{\varepsilon(t)}\right)+\dfrac{\lambda^{3}\varepsilon^{\frac{3}{2}}(t)}{2b}\right]\|x(t)-x_{\varepsilon(t)}\|^{2}\\
	&&+\dfrac{1}{2}\left(\left(1+\frac{1}{a}\right)\lambda-\delta\right)\sqrt{\varepsilon(t)}\|\dot{x}(t)\|^{2}+\dfrac{1}{2}\left(\lambda-\delta\right)\sqrt{\varepsilon(t)}\|\dot{x}(t)+\beta A_p (t)\|^{2} \nonumber \\
	&&+\dfrac{1}{2}\left(2a+b\right)\lambda\sqrt{\varepsilon(t)}\|\dfrac{d}{dt}x_{\varepsilon(t)}\|^{2}+\dfrac{(p-1)\lambda\beta}{2}\dot{\varepsilon}(t)\sqrt{\varepsilon(t)}\langle x(t)-x_{\varepsilon(t)},x(t)\rangle \nonumber \\
	&&+ \dfrac{\beta}{2}\left[\dfrac{\beta \lambda\sqrt{\varepsilon(t)}}{a}-1-\beta (\lambda-\delta)\sqrt{\varepsilon(t)})\right]\|A_p (t)\|^2-\dfrac{\beta}{2}\|\nabla \varphi_{t}(x(t))\|^{2} . \nonumber
	\end{eqnarray}
To build the differential inequality satisfied by $E_p (\cdot)$, let us majorize
	\begin{equation}\label{100}
	\begin{array}{lll}
	\mu(t)E_p(t)& = & \mu(t)\left(\varphi_{t}(x(t))-\varphi_{t}(x_{\varepsilon(t)})\right) +\dfrac{\mu(t)}{2}\|v_p(t)\|^{2} \vspace{1mm} \\ 
	& = & \mu(t)\left(\varphi_{t}(x(t))-\varphi_{t}(x_{\varepsilon(t)})\right) +\dfrac{\mu(t)\lambda^{2}\varepsilon(t)}{2}\|x(t)-x_{\varepsilon(t)}\|^{2}+\dfrac{\mu(t)}{2}\|\dot{x}(t)+\beta A_p (t)\|^{2}\\
	&&+\mu(t)\lambda\sqrt{\varepsilon(t)}\langle x(t)-x_{\varepsilon(t)}, \dot{x}(t)+\beta A_p  (t)\rangle\\
	&\leq&\mu(t)\left(\varphi_{t}(x(t))-\varphi_{t}(x_{\varepsilon(t)})\right) +\dfrac{\mu(t)\lambda^{2}\varepsilon(t)}{2}\|x(t)-x_{\varepsilon(t)}\|^{2}+\dfrac{\mu(t)}{2}\|\dot{x}(t)+\beta A_p (t)\|^{2}\\
	&&  +\mu(t)\lambda\sqrt{\varepsilon(t)}\langle x(t)-x_{\varepsilon(t)}, \dot{x}(t)\rangle+\beta\mu(t)\lambda\sqrt{\varepsilon(t)}\langle x(t)-x_{\varepsilon(t)}, \nabla \varphi_{t}(x(t)) \rangle\\
	&&
	+(p-1)\beta\mu(t)\lambda\varepsilon(t)\sqrt{\varepsilon(t)}\langle x(t)-x_{\varepsilon(t)}, x(t) \rangle \vspace{1mm}\\
	&\leq&\mu(t)\left(\varphi_{t}(x(t))-\varphi_{t}(x_{\varepsilon(t)})\right) +\mu(t)\lambda^{2}\varepsilon(t)\|x(t)-x_{\varepsilon(t)}\|^{2}+\dfrac{\mu(t)}{2}\|\dot{x}(t)+\beta A_p (t)\|^{2}\\
	&&  +\mu(t)\lambda\sqrt{\varepsilon(t)}\langle x(t)-x_{\varepsilon(t)}, \dot{x}(t)\rangle+\frac{\beta^{2}}{2}\mu(t)\|\nabla \varphi_{t}(x(t)) \|^{2}\\
	&&
	+(p-1)\beta\mu(t)\lambda\varepsilon(t)\sqrt{\varepsilon(t)}\langle x(t)-x_{\varepsilon(t)}, x(t) \rangle .
	\end{array} 
	\end{equation}
	By adding \eqref{9} and \eqref{100},  using $\mu(t)=-\dfrac{\dot{\varepsilon}(t)}{2\varepsilon (t)}+(\delta-\lambda )\sqrt{\varepsilon (t)}$, and after simplification, we get
	\begin{equation}\label{13a}
	\begin{array}{lll}
	\dot{E}_p(t)+\mu(t)E_p(t)&\leq &\sqrt{\varepsilon(t)}\left(-\dfrac{\dot{\varepsilon}(t)}{2\varepsilon^{\frac{3}{2}}(t)}+(\delta-2\lambda)+\lambda\beta\dfrac{\dot{\varepsilon}(t)}{2\varepsilon(t)}\right)\left(\varphi_{t}(x(t))-\varphi_{t}(x_{\varepsilon(t)})\right)\\
	&&+\dfrac{1}{2}\left[\dot{\varepsilon}(t)+ \beta(p-1)^{2}\varepsilon^{2}(t)
	\right]\|x(t)\|^{2}-\dfrac{1}{2}\dot{\varepsilon}(t)\|x_{\varepsilon(t)}\|^{2}\\ 
	&&\dfrac{\lambda}{4}\left[\beta\sqrt{\varepsilon(t)}\dot{\varepsilon}(t)+ 2\left(2\delta \lambda-2\lambda^{2}-1\right)\varepsilon^{\frac{3}{2}}(t)+2\frac{\lambda^{2}}{b}\varepsilon^{\frac{3}{2}}(t)\right]\|x(t)-x_{\varepsilon(t)}\|^{2}\\
	&&+\dfrac{1}{2}\left((1+\frac{1}{a})\lambda-\delta\right)\sqrt{\varepsilon(t)}\|\dot{x}(t)\|^{2}-\dfrac{\dot{\varepsilon}(t)}{4\varepsilon(t)}\|\dot{x}(t)+\beta A_p (t)\|^{2}\\
	&&+\dfrac{1}{2}\left(2a+b\right)\lambda \sqrt{\varepsilon(t)}\|\dfrac{d}{dt}x_{\varepsilon(t)}\|^{2}+(p-1)\beta \lambda(\delta-\lambda)\varepsilon^{2}(t)\langle x(t)-x_{\varepsilon(t)}, x(t) \rangle\\
	&&+ \dfrac{\beta}{2}\left[\dfrac{\beta \lambda\sqrt{\varepsilon(t)}}{a}-1-\beta (\lambda-\delta)\sqrt{\varepsilon(t)})\right]\|A_p (t)\|^2\\
	&&+\dfrac{\beta}{2}\left(-1-\beta\dfrac{\dot{\varepsilon}(t)}{2\varepsilon(t)}+\beta\left(\delta-\lambda \right)\sqrt{\varepsilon(t)}\right)\|\nabla \varphi_{t}(x(t))\|^{2} .
	\end{array} 
	\end{equation}
	Since $\varepsilon(\cdot)$ is nonincreasing, 
$	-\dfrac{\dot{\varepsilon}(t)}{4\varepsilon(t)}\|\dot{x}(t)+\beta A_p (t)\|^{2}\leq -\dfrac{\dot{\varepsilon}(t)}{2\varepsilon(t)}\|\dot{x}(t)\|^{2}-\dfrac{\beta^{2}\dot{\varepsilon}(t)}{2\varepsilon(t)}\|A_p (t)\|^{2}.$\\
Combining this inequality with 	
	$\langle x(t)-x_{\varepsilon(t)}, x(t) \rangle=\dfrac{1}{2}\left[\|x(t)-x_{\varepsilon(t)}\|^{2}+\|x(t)\|^{2}-\|x_{\varepsilon(t)}\|^{2}\right]$, 	 
	 we get
	\begin{equation}\label{13}
	\begin{array}{lll}
	\dot{E}_p(t)+\mu(t)E_p(t)&\leq &\sqrt{\varepsilon(t)}\left(-\dfrac{\dot{\varepsilon}(t)}{2\varepsilon^{\frac{3}{2}}(t)}+(\delta-2\lambda)+\lambda\beta\dfrac{\dot{\varepsilon}(t)}{2\varepsilon(t)}\right)\left(\varphi_{t}(x(t))-\varphi_{t}(x_{\varepsilon(t)})\right)\\
	&+&\dfrac{1}{2}\left[\dot{\varepsilon}(t)+\beta(p-1)\left( p-1+\lambda(\delta-\lambda)\right)\varepsilon^{2}(t)
	\right]\|x(t)\|^{2}\\
	&-&\dfrac{1}{2}\left[\dot{\varepsilon}(t)+(p-1)\beta \lambda(\delta-\lambda)\varepsilon^{2}(t)\right]\|x_{\varepsilon(t)}\|^{2}\\ 
	&+&\dfrac{\lambda}{4}\Big[\beta\sqrt{\varepsilon(t)}\dot{\varepsilon}(t)+2\left(2\delta \lambda-2\lambda^{2}-1\right)\varepsilon^{\frac{3}{2}}(t)+2\frac{\lambda^{2}}{b}\varepsilon^{\frac{3}{2}}(t)\\
	&+&2(p-1)\beta(\delta-\lambda)\varepsilon^{2}(t)\Big]\|x(t)-x_{\varepsilon(t)}\|^{2}\\
	&+&\left[\dfrac{1}{2}\left((1+\frac{1}{a})\lambda-\delta\right)\sqrt{\varepsilon(t)}-\dfrac{\dot{\varepsilon}(t)}{2\varepsilon(t)}\right]\|\dot{x}(t)\|^{2}+\dfrac{1}{2}\left(2a+b\right)\lambda\sqrt{\varepsilon(t)}\|\dfrac{d}{dt}x_{\varepsilon(t)}\|^{2}\\
	&+& \dfrac{\beta}{2}\left[\dfrac{\beta \lambda\sqrt{\varepsilon(t)}}{a}-1-\beta(\lambda-\delta)\sqrt{\varepsilon(t)})-\dfrac{\beta\dot{\varepsilon}(t)}{\varepsilon(t)}\right]\|A_p (t)\|^2\\
	&+&\dfrac{\beta}{2}\left(-1-\beta\dfrac{\dot{\varepsilon}(t)}{2\varepsilon(t)}+\beta\left(\delta-\lambda \right)\sqrt{\varepsilon(t)}\right)\|\nabla \varphi_{t}(x(t))\|^{2}.
	\end{array} 
	\end{equation}
	By using Lemma \ref{lem1}, we have 	
	$$
	\left\|\dfrac{d}{dt}x_{\varepsilon(t)}\right\|^{2}\leq \dfrac{\dot{\varepsilon}^{2}(t)}{\varepsilon^{2}(t)}\|x_{\varepsilon(t)}\|^{2}\leq \dfrac{\dot{\varepsilon}^{2}(t)}{\varepsilon^{2}(t)}\|x^{*}\|^{2},
	 $$
which  gives
\begin{equation}\label{14a}
	\begin{array}{lll}
	\dot{E}_p(t)+\mu(t)E_p(t)&\leq &\sqrt{\varepsilon(t)}\left(-\dfrac{\dot{\varepsilon}(t)}{2\varepsilon^{\frac{3}{2}}(t)}+(\delta-2\lambda)+\lambda\beta\dfrac{\dot{\varepsilon}(t)}{2\varepsilon(t)}\right)\left(\varphi_{t}(x(t))-\varphi_{t}(x_{\varepsilon(t)})\right)\\
	&&+\dfrac{1}{2}\left[\dot{\varepsilon}(t)+\beta(p-1)\left( p-1+\lambda(\delta-\lambda)\right)\varepsilon^{2}(t)
	\right]\|x(t)\|^{2}\\
	&&+\dfrac{\lambda}{4}\Big[\beta\sqrt{\varepsilon(t)}\dot{\varepsilon}(t)+ 2\left(2\delta \lambda-2\lambda^{2}-1\right)\varepsilon^{\frac{3}{2}}(t)+2\frac{\lambda^{2}}{b}\varepsilon^{\frac{3}{2}}(t)\\
	&&+2(p-1)\beta(\delta-\lambda)\varepsilon^{2}(t)\Big]\|x(t)-x_{\varepsilon(t)}\|^{2}\\
	&&+\left[\dfrac{1}{2}\left((1+\frac{1}{a})\lambda-\delta\right)\sqrt{\varepsilon(t)}-\dfrac{\dot{\varepsilon}(t)}{2\varepsilon(t)}\right]\|\dot{x}(t)\|^{2}\\
	&&+\dfrac{1}{2}\left[\left(2a+b\right)\lambda\dfrac{\dot{\varepsilon}^{2}(t)}{\varepsilon^{\frac{3}{2}}(t)}-\dot{\varepsilon}(t)+(1-p)\beta \lambda(\delta-\lambda)\varepsilon^{2}(t)\right]\|x_{\varepsilon(t)}\|^{2}\\
	&&+ \dfrac{\beta}{2}\left[\dfrac{\beta \lambda\sqrt{\varepsilon(t)}}{a}-1-\beta(\lambda-\delta)\sqrt{\varepsilon(t)}-\dfrac{\beta\dot{\varepsilon}(t)}{\varepsilon(t)}\right]\|A_p (t)\|^2\\
	&&+\dfrac{\beta}{2}\left(-1-\beta\dfrac{\dot{\varepsilon}(t)}{2\varepsilon(t)}+\beta\left(\delta-\lambda\right)\sqrt{\varepsilon(t)}\right)\|\nabla \varphi_{t}(x(t))\|^{2}.
	\end{array} 
	\end{equation}
Let us make precise the choice of the parameter $b$ and take
$$
b:= \frac{c}{2}\lambda   \mbox{ with }  c>2.
$$
Let us analyze the sign of the coefficients involved in \eqref{14a}. Since $\varepsilon(t)$ is nonincreasing and $p\leq1,$ 
	\begin{eqnarray}
	&&\dot{E}_p(t)+\mu(t)E_p(t)\;\;\leq \;\;\sqrt{\varepsilon(t)}\left(-\underbrace{\dfrac{\dot{\varepsilon}(t)}{2\varepsilon^{\frac{3}{2}}(t)}+(\delta-2\lambda)}_{=A}+\lambda\beta\underbrace{\dfrac{\dot{\varepsilon}(t)}{2\varepsilon(t)}}_{\leq 0}\right)\left(\varphi_{t}(x(t))-\varphi_{t}(x_{\varepsilon(t)})\right)
	\nonumber \\
	&&+\dfrac{1}{2}\left[\underbrace{\dot{\varepsilon}(t)+\beta(p-1)\left( p-1+\lambda(\delta-\lambda)\right)\varepsilon^{2}(t)}_{=B}
	\right]\|x(t)\|^{2}  \nonumber\\ 
	&&+\dfrac{\lambda}{4}\left[\underbrace{\beta\sqrt{\varepsilon(t)}\dot{\varepsilon}(t)}_{\leq 0}+2\left(\underbrace{2\left(\delta+\frac{1}{c}\right) \lambda-2\lambda^{2}-1}_{=C}\right)\varepsilon^{\frac{3}{2}}(t)+2\underbrace{(p-1)\beta(\delta-\lambda)}_{\leq 0 \, \text{since} \; \delta\geq \lambda}\varepsilon^{2}(t)\right]\|x(t)-x_{\varepsilon(t)}\|^{2}  \nonumber \\
	&&+\underbrace{\left[\dfrac{1}{2}\left((1+\frac{1}{a})\lambda-\delta\right)\sqrt{\varepsilon(t)}-\dfrac{\dot{\varepsilon}(t)}{2\varepsilon(t)}\right]}_{=D}\|\dot{x}(t)\|^{2}+ \dfrac{\beta}{2}\underbrace{\left[\beta\left(\dfrac{ \lambda}{a}+\delta-\lambda\right)\sqrt{\varepsilon(t)}-1-\dfrac{\beta\dot{\varepsilon}(t)}{\varepsilon(t)}\right]}_{\Delta_1}\|A_p (t)\|^2  \nonumber \\
	&&+\dfrac{1}{2}\left[\left(2a+c\lambda\right)\lambda\dfrac{\dot{\varepsilon}^{2}(t)}{\varepsilon^{\frac{3}{2}}(t)}-\dot{\varepsilon}(t)+(1-p)\beta \lambda(\delta-\lambda)\varepsilon^{2}(t)\right]\|x_{\varepsilon(t)}\|^{2} \nonumber \\
	&&+\dfrac{\beta}{2}\underbrace{\left(-1-\beta\dfrac{\dot{\varepsilon}(t)}{2\varepsilon(t)}+\beta\left(\delta-\lambda\right) \sqrt{\varepsilon(t)}\right)}_{\Delta_2}\|\nabla \varphi_{t}(x(t))\|^{2} . \label{14}
	\end{eqnarray}
$\bullet$ \; Condition $(\mathcal{H}_p)$ implies that 
	$$\frac{d}{dt}\left(\dfrac{1}{\sqrt{\varepsilon(t)}}\right) \leq (2\lambda-\delta),\quad
	\frac{d}{dt}\left(\dfrac{1}{\sqrt{\varepsilon(t)}}\right) \leq \frac{1}{2}\left(\delta-(1+\frac{1}{a})\lambda\right) \quad \text{and}\quad \delta\beta\leq\dfrac{1}{\sqrt{\varepsilon(t)}}.$$
	So, we have 
$$A=-\dfrac{\dot{\varepsilon}(t)}{2\varepsilon^{\frac{3}{2}}(t)}+(\delta-2\lambda)=\dfrac{d}{dt}\left(\dfrac{1}{\sqrt{\varepsilon(t)}}\right)+(\delta-2\lambda)\leq 0.$$
\smallskip
$\bullet$ \; 	According to  $\delta>2\sqrt{(1-p)}$ and $ \lambda < \frac{\delta+\sqrt{\delta^{2}-4(1-p)}}{2},$ we have   $p-1+\delta\lambda-\lambda^{2}\geq 0$.\\
 Therefore, 
$$
B	= \underbrace{\dot{\varepsilon}(t)}_{\leq 0}+\beta(p-1)\left( p-1+\lambda(\delta-\lambda)\right)\varepsilon^{2}(t)  \leq   \beta\underbrace{(p-1)}_{\leq 0} \left( p-1+\delta\lambda-\lambda^{2}\right)\varepsilon^{2}(t)\leq 0.
$$	
\smallskip
$\bullet$ \;  
 When $\delta \leq \sqrt{2}-\frac{1}{c}$ we have
		$$
	2\left(\delta+\frac{1}{c}\right) \lambda-2\lambda^{2}-1 \leq 2\sqrt{2}\lambda-2\lambda^{2} -1 = -(\sqrt{2}\lambda-1)^2\leq 0.
		$$ 
 When $\delta > \sqrt{2}-\frac{1}{c},$ we have
	 $$(\delta+\frac{1}{c})\lambda-\lambda^{2}- \demi\leq 0, \mbox{ because } \lambda \geq \demi \left( \delta+\frac{1}{c}+\sqrt{(\delta+\frac{1}{c})^{2}-2} \right).
	 $$	
		 Therefore 
		$
		(\delta+\frac{1}{c}) \lambda-\lambda^{2}-\demi\leq 0,
		$
		which implies that 
		$$
		C=2\left(\delta+\frac{1}{c}\right) \lambda-2\lambda^{2}-1\leq 0.
		$$
\smallskip	
$\bullet$ \; Condition $(\mathcal{H}_p)$ implies that 
 $$D=\left[\dfrac{1}{2}\left((1+\frac{1}{a})\lambda-\delta\right)-\frac{1}{2}\dfrac{\dot{\varepsilon}(t)}{\varepsilon^{\frac{3}{2}}(t)}\right] =
 \left[\frac{d}{dt}\left(\dfrac{1}{\sqrt{\varepsilon(t)}}\right)+\frac{1}{2}\left((1+\frac{1}{a})\lambda-\delta\right)\right]\leq 0.
 $$
\smallskip
$\bullet$ \;  For nonpositivity of $\Delta_1$, we use $\delta\beta\leq \dfrac{1}{\sqrt{\varepsilon(t)}}$ and $\dfrac{d}{dt}\left(\dfrac{1}{\sqrt{\varepsilon}(t)}\right)\leq \dfrac{1}{2}\left(\delta-(1+\dfrac{1}{a})\lambda\right)$ to conclude
		\begin{equation*}
		\begin{array}{lll}
		\Delta_1	&  =&\beta\left(\dfrac{ \lambda}{a}+\delta-\lambda\right)\sqrt{\varepsilon(t)}-1-\dfrac{\beta\dot{\varepsilon}(t)}{\varepsilon(t)} \\
		& = & \beta\left(\dfrac{ \lambda}{a}+\delta-\lambda\right)\sqrt{\varepsilon(t)}-1+ 2\beta\sqrt{\varepsilon(t)}\underbrace{\left(-\dfrac{\dot{\varepsilon}(t)}{2\varepsilon^{\frac{3}{2}}(t)}-\frac{1}{2}\left(\delta-(1+\frac{1}{a})\lambda\right)\right)}_{\leq 0}+\beta\left(\delta-(1+\frac{1}{a})\lambda\right)\sqrt{\varepsilon(t)}\\
		&\leq& 2\beta\left(\delta-\lambda\right)\sqrt{\varepsilon(t)}-1=\beta\underbrace{\left(\delta-2\lambda\right)}_{\leq 0}\sqrt{\varepsilon(t)}+\underbrace{\beta\delta\sqrt{\varepsilon(t)}-1}_{\leq 0}\leq0.
\end{array}	
\end{equation*}
$\bullet$\; Finally,
\begin{equation*}\label{key}
		\begin{array}{lll}
		\Delta_2&=&\lambda\beta\sqrt{\varepsilon(t)}-1+\beta\sqrt{\varepsilon(t)}\underbrace{\left((\delta-2\lambda)+\frac{d}{dt}\left(\dfrac{1}{\sqrt{\varepsilon(t)}}\right)\right)}_{\leq 0}\\
       	&\leq &\lambda\left(\beta\sqrt{\varepsilon(t)}-\dfrac{1}{\delta}\right)+\dfrac{\lambda}{\delta}-1\leq \dfrac{\lambda}{\delta}-1 .
 \end{array}
 \end{equation*}
It follows from the estimate  \eqref{14} that
	\begin{equation*}
	\begin{array}{lll}
	\dot{E}_p(t)+\mu(t)E_p(t)&\leq &
	\dfrac{1}{2}\left[\left(2a+c\lambda\right)\lambda\dfrac{\dot{\varepsilon}^{2}(t)}{\varepsilon^{\frac{3}{2}}(t)}-\dot{\varepsilon}(t)+(1-p)\beta \lambda(\delta-\lambda)\varepsilon^{2}(t)\right]\|x_{\varepsilon(t)}\|^{2}\\
	&&+ \frac{\beta}{2}\left(\frac{\lambda}{\delta}-1\right)\|\nabla \varphi_{t}(x(t))\|^{2}.
	\end{array} 
	\end{equation*}
	Since $\|x_{\varepsilon(t)}\|\leq \|x^{*}\|,$  we get
	\begin{equation}\label{est14}
	\begin{array}{lll}
	\dot{E}_p(t)+\mu(t)E_p(t)&\leq &
	\dfrac{\|x^{*}\|^{2}}{2}G(t)+  \frac{\beta}{2}\left(\frac{\lambda}{\delta}-1\right)\|\nabla \varphi_{t}(x(t))\|^{2},
	\end{array} 
	\end{equation}	
	where 
	$
	G(t)= (\lambda c+2a)\lambda\dfrac{\dot{\varepsilon}^{2}(t)}{\varepsilon^{\frac{3}{2}}(t)}-\dot{\varepsilon}(t)+(1-p)\beta \lambda(\delta-\lambda)\varepsilon^{2}(t).$\\
We have $\frac{\lambda}{\delta}-1\leq0$. By taking $\gamma(t)=\exp\left(\displaystyle \int_{t_1}^{t} \mu(s)ds\right),$ and setting
\begin{equation}\label{def:Wp}
W_p(t)\eqdef e^{\int^{t}_{t_1}\mu(s)ds}E_p(t) 
\end{equation}
 we conclude that
	\begin{equation}\label{15}
	\dot{W}_p(t)=\gamma(t)\left(\dot{E}_p(t)+\mu(t)E_p(t)\right)\leq  \frac{\|x^{*}\|^{2}}{2}G(t)\gamma(t).
	\end{equation}
	By integrating \eqref{15} on $[t_1 , t]$, and dividing by $\gamma(t),$ we obtain  our claim \eqref{Lyap-basic1} 
	\begin{equation}
	E_p(t)\leq \dfrac{\|x^{*}\|^{2}}{2\gamma(t)}  \displaystyle\int_{t_1}^{t}G(s) \gamma(s) ds+ \dfrac{\gamma(t_1)E_{p}(t_1)}{\gamma(t)} .
	\end{equation}
	Coming back to (\ref{est14}), we get by integration 
		$$E_p (t)-E_p (t_1 )+\int_{t_1}^{t}\mu(s)E_p (s)ds+\frac{\beta}{2}\left(1-\frac{\lambda}{\delta}\right) \int_{t_1}^{t}\Vert\nabla\varphi_s(x(s))\Vert^2 ds\leq \frac{\Vert x^* \Vert^2}{2}\int_{t_1}^{t} G(s)ds.$$
Our assertion \eqref{Lyap-basic11}  is then reached by neglecting the positive term $E_p (t)+\int_{t_1}^{t}\mu(s)E_p (s)ds$. \qed
		\end{proof}
	\begin{corollary}\label{corol1}
		Let  $x(\cdot): [t_0, + \infty[ \to \cH$ be a solution trajectory of the system {\rm(TRISHE)} with $\delta>0$
		\begin{equation}\label{equp1}
		\ddot{x}(t)+\delta\sqrt{\varepsilon(t)}\dot{x}(t)+\beta\dfrac{d}{dt}\left(\nabla \varphi_{t}(x(t)))\right)+\nabla\varphi_{t}(x(t))=0.
		\end{equation}
		Let us assume that there exists $a,c>1$  and $t_1 \geq t_0$ such that for all $t \geq t_1 ,$  $( \mathcal{H}_1)$ holds.
		Then,  
		\begin{equation}\label{Lyap-basic20}
		E_1(t)\leq \dfrac{\|x^{*}\|^{2}}{2}\dfrac{\displaystyle \int_{t_1}^{t}\left[\left((\lambda c+2a)\lambda\dfrac{\dot{\varepsilon}^{2}(s)}{\varepsilon^{\frac{3}{2}}(s)}-\dot{\varepsilon}(s)\right) \gamma(s) \right]ds}{\gamma(t)}+ \dfrac{\gamma(t_1)E_1(t_1)}{\gamma(t)}
		\end{equation}
		where $\gamma(t)=\exp\left(\displaystyle \int_{t_1}^{t} \mu(s)ds\right)$ and $ \mu(t)=-\dfrac{\dot{\varepsilon}(t)}{2\varepsilon(t)}+(\delta-\lambda)\sqrt{\varepsilon(t)}$.
	\end{corollary}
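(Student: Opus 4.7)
The statement is a direct specialization of Theorem~\ref{strong-conv-thm-b} to the parameter value $p=1$, and I would obtain it by simple substitution. The first step is to recognize that the system \eqref{equp1} in the corollary coincides with \eqref{1principal-bis} when $p=1$: the contribution $(p-1)\varepsilon(t)x(t)$ inside the time-derivative bracket vanishes, leaving just $\beta\,\frac{d}{dt}\nabla\varphi_{t}(x(t))$. Any solution of (TRISHE) therefore falls within the scope of Theorem~\ref{strong-conv-thm-b}, and the Lyapunov function $E_p$ of \eqref{3bis}, with its auxiliary vector $v_p$ from \eqref{3bbis}, reduces to $E_1$ and $v_1$ since the bracket $[\nabla\varphi_{t}(x(t))+(p-1)\varepsilon(t)x(t)]$ simplifies to $\nabla\varphi_{t}(x(t))$.

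Next, I would specialize the hypotheses. The lower bound $\delta>2\sqrt{1-p}$ required by Theorem~\ref{strong-conv-thm-b} reduces to $\delta>0$, matching the hypothesis of the corollary. The condition $(\mathcal{H}_p)$ becomes $(\mathcal{H}_1)$, which is assumed. Crucially, in the definition \eqref{def:G} of $G$ the last summand $(1-p)\beta\lambda(\delta-\lambda)\varepsilon^{2}(t)$ vanishes identically, leaving
$$
G(t) \;=\; (\lambda c+2a)\lambda\,\frac{\dot{\varepsilon}^{2}(t)}{\varepsilon^{3/2}(t)}-\dot{\varepsilon}(t).
$$

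Plugging this simplified $G$ into the bound \eqref{Lyap-basic1} of Theorem~\ref{strong-conv-thm-b} yields exactly the estimate \eqref{Lyap-basic20}, with the same weight $\gamma(t)=\exp\bigl(\int_{t_1}^{t}\mu(s)\,ds\bigr)$ and the same damping $\mu(t)=-\dot{\varepsilon}(t)/(2\varepsilon(t))+(\delta-\lambda)\sqrt{\varepsilon(t)}$. The whole analytic work---the sign analysis of the coefficients $A,B,C,D,\Delta_{1},\Delta_{2}$ appearing in \eqref{14} and the Gronwall-type integration of \eqref{est14}---is already absorbed in the proof of Theorem~\ref{strong-conv-thm-b}, so no genuine obstacle remains. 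The corollary merely records the cleaner form of the estimate obtained for the (TRISHE) variant, where choosing $p=1$ cancels the extra $\varepsilon^{2}$ contribution in $G$ that would otherwise appear when dealing with (TRISH).
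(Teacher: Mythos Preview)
Your proposal is correct and matches the paper's approach exactly: the corollary is stated immediately after Theorem~\ref{strong-conv-thm-b} as a direct specialization to $p=1$, with no separate proof given. Your identification that $(p-1)\varepsilon(t)x(t)$ and the $(1-p)\beta\lambda(\delta-\lambda)\varepsilon^{2}(t)$ term in $G$ vanish, and that $\delta>2\sqrt{1-p}$ reduces to $\delta>0$, is precisely the substitution the paper relies on.
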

	\begin{corollary}\label{corol2}
		Let  $x(\cdot): [t_0, + \infty[ \to \cH$ be a solution trajectory of the system {\rm(TRISH)} with $\delta>2$
		\begin{equation}\label{equp2}
		\ddot{x}(t)+\delta\sqrt{\varepsilon(t)}\dot{x}(t)+\beta\nabla^{2} f(x(t))\dot{x}(t)+\nabla\varphi_{t}(x(t))=0.
		\end{equation}
		Let us assume that there exists $a,c>1$  and $t_1 \geq t_0$ such that for all $t \geq t_1 ,$  condition $( \mathcal{H}_0)$ holds. Then, 
		\begin{equation}\label{Lyap-basic3}
		E_0(t)\leq \dfrac{\|x^{*}\|^{2}}{2}\dfrac{\displaystyle \int_{t_1}^{t}\left[\left((\lambda c+2a)\lambda\dfrac{\dot{\varepsilon}^{2}(s)}{\varepsilon^{\frac{3}{2}}(s)}-\dot{\varepsilon}(s)+\beta \lambda(\delta-\lambda)\varepsilon^{2}(t)\right) \gamma(s) \right]ds}{\gamma(t)}+ \dfrac{\gamma(t_1)E_0(t_1)}{\gamma(t)}
		\end{equation}
		where $\gamma(t)=\exp\left(\displaystyle \int_{t_1}^{t} \mu(s)ds\right)$ and $ \mu(t)=-\dfrac{\dot{\varepsilon}(t)}{2\varepsilon(t)}+(\delta-\lambda)\sqrt{\varepsilon(t)}$.
	\end{corollary}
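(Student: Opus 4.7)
The plan is to obtain Corollary \ref{corol2} as a direct specialization of Theorem \ref{strong-conv-thm-b} to the parameter value $p=0$.

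First I would verify that the TRISH dynamic \eqref{equp2} is exactly the master system \eqref{1principal-bis} at $p=0$. Using $\nabla\varphi_t(x) = \nabla f(x) + \varepsilon(t)x$, the bracket in \eqref{1principal-bis} reads
$$\nabla\varphi_t(x(t)) + (p-1)\varepsilon(t)x(t) = \nabla f(x(t)) + p\,\varepsilon(t)x(t),$$
which at $p=0$ collapses to $\nabla f(x(t))$. Its time derivative along the trajectory is $\nabla^2 f(x(t))\dot{x}(t)$ by the chain rule \eqref{basic_equ_1}. Substituting into \eqref{1principal-bis} yields precisely \eqref{equp2}, so the solution trajectory $x(\cdot)$ of \eqref{equp2} is also a solution trajectory of \eqref{1principal-bis} with $p=0$.

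Next I would check the hypotheses of Theorem \ref{strong-conv-thm-b}. The threshold $\delta > 2\sqrt{1-p}$ becomes $\delta > 2$ at $p=0$, matching the corollary's standing assumption. The condition $(\mathcal H_0)$ is, by definition, $(\mathcal H_p)$ at $p=0$; in the bifurcated definition of the admissible range for $\lambda$, the value $p=0$ lies in the branch $p\in[0,\tfrac12]$, which selects the window
$$\tfrac12\bigl(\delta+\tfrac{1}{c}+\sqrt{(\delta+\tfrac{1}{c})^{2}-2}\bigr) < \lambda < \min\!\Bigl(\tfrac{a}{a+1}\delta,\ \tfrac{\delta+\sqrt{\delta^{2}-4}}{2}\Bigr),$$
which is nonempty exactly when $\delta>2$. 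Hence all the ingredients required to invoke Theorem \ref{strong-conv-thm-b} are in place.

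Finally I would apply inequality \eqref{Lyap-basic1} with $p=0$. The Lyapunov function $E_0(t)$ is the one given by \eqref{3bis}--\eqref{3bbis} at $p=0$, the multiplier $\mu$ and the kernel $\gamma$ are those of \eqref{def:mu_gamma}, and the forcing term $G$ of \eqref{def:G} specializes, at $p=0$, to
$$G(t) = (\lambda c+2a)\lambda\,\dfrac{\dot{\varepsilon}^{2}(t)}{\varepsilon^{\frac{3}{2}}(t)}-\dot{\varepsilon}(t)+\beta\,\lambda(\delta-\lambda)\,\varepsilon^{2}(t),$$
so that \eqref{Lyap-basic1} becomes exactly \eqref{Lyap-basic3}. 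Because the heavy Lyapunov analysis has already been carried out in the proof of Theorem \ref{strong-conv-thm-b}, there is no substantive obstacle here; the only care-points are administrative, namely tracking which branch of $(\mathcal H_p)$ governs the admissible window for $\lambda$ at $p=0$, and noting that the term $(1-p)\beta\lambda(\delta-\lambda)\varepsilon^{2}(t)$ inside $G$ is now fully present (it is precisely the contribution that disappears in the companion Corollary \ref{corol1}, where $p=1$).
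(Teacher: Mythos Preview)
Your proposal is correct and follows exactly the paper's approach: Corollary~\ref{corol2} is stated in the paper as an immediate specialization of Theorem~\ref{strong-conv-thm-b} at $p=0$, with no separate proof given. Your verification that \eqref{equp2} is \eqref{1principal-bis} at $p=0$, that $\delta>2$ matches $\delta>2\sqrt{1-p}$, and that $G$ specializes as claimed, is precisely the routine check the paper leaves implicit.
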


\if
{
The following theorem gives an integral estimation of the gradients.	
It makes use of the energy function $W_p$ which has been defined in \eqref{def:Wp}.
\if
{
\begin{equation}\label{W}
W_p(t)=e^{\int^{t}_{t_0}\mu(s)ds}E_p(t)\;\text{ where }\mu(t)=-\dfrac{\dot{\varepsilon}(t)}{2\varepsilon(t)}+(\delta-K)\sqrt{\varepsilon(t)}.
\end{equation} 
which satisfies 
	\begin{equation}\label{W1a}
	\dfrac{d}{dt}W_p(t)=e^{\int^{t}_{t_0}\mu(s)ds}\left[\dfrac{d}{dt}E_p(t)+\mu(tE_p(t))\right] .
	\end{equation}
}
\fi

\begin{theorem}
Under condition of Theorem \ref{strong-conv-thm-b}, if  $\int_{t_0}^{+\infty}e^{\int_{t_0}^{s}\mu(\tau)d\tau}G(s)ds <+\infty$   then 
\begin{equation}\label{key10}
\displaystyle{\int_{t_1}^{+\infty}e^{\int_{t_0}^{s}\mu(\tau)d\tau}\Vert\nabla\varphi_s (x(s))\Vert^2 ds<+\infty}.
\end{equation}
If further, $\sup_{t\geq t_0}\left(e^{\int_{t_0}^{t}\mu(\tau)d\tau}\varepsilon(t)\right)<+\infty,$ then \;
$
f(x(t))-\min_{\cH} f= \mathcal O \left( \displaystyle\dfrac{1}{e^{\int_{t_0}^{t}\mu(\tau)d\tau} }   \right) .
$
\end{theorem}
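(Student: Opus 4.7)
The plan is to exploit the differential inequality \eqref{est14} obtained in the proof of Theorem \ref{strong-conv-thm-b}, namely
\[
\dot{E}_p(t)+\mu(t)E_p(t)\leq \frac{\|x^*\|^2}{2}G(t)+\frac{\beta}{2}\left(\frac{\lambda}{\delta}-1\right)\|\nabla\varphi_t(x(t))\|^2,
\]
and to take advantage of the fact that, under the condition $(\mathcal H_p)$, the coefficient $\frac{\lambda}{\delta}-1$ is \emph{strictly negative}. First I would multiply this inequality by the integrating factor $\gamma(t)=\exp(\int_{t_1}^{t}\mu(s)ds)$; this is exactly the manipulation leading to \eqref{15}, except that I now keep the gradient term on the left-hand side instead of discarding it. This yields
\[
\frac{d}{dt}W_p(t)+\frac{\beta}{2}\left(1-\frac{\lambda}{\delta}\right)\gamma(t)\|\nabla\varphi_t(x(t))\|^2\leq \frac{\|x^*\|^2}{2}G(t)\gamma(t),
\]
where $W_p(t)=\gamma(t)E_p(t)$ is nonnegative.

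Integrating this inequality on $[t_1,t]$ and using $W_p(t)\geq 0$ to drop it from the left-hand side gives
\[
\frac{\beta}{2}\left(1-\frac{\lambda}{\delta}\right)\int_{t_1}^{t}\gamma(s)\|\nabla\varphi_s(x(s))\|^2 ds\leq W_p(t_1)+\frac{\|x^*\|^2}{2}\int_{t_1}^{t}G(s)\gamma(s)\,ds.
\]
Since $\gamma(t)$ and $e^{\int_{t_0}^{t}\mu(\tau)d\tau}$ differ only by the positive multiplicative constant $e^{\int_{t_0}^{t_1}\mu(\tau)d\tau}$, the standing hypothesis $\int_{t_0}^{+\infty}e^{\int_{t_0}^{s}\mu(\tau)d\tau}G(s)ds<+\infty$ is equivalent to $\int_{t_1}^{+\infty}\gamma(s)G(s)ds<+\infty$. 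Letting $t\to+\infty$ then yields the first claim
\[
\int_{t_1}^{+\infty}e^{\int_{t_0}^{s}\mu(\tau)d\tau}\|\nabla\varphi_s(x(s))\|^2 ds<+\infty.
\]

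For the value estimate, I would combine Lemma \ref{lem-basic-b}\eqref{keybb-00}, which reads $f(x(t))-\min_{\cH}f\leq E_p(t)+\tfrac{\varepsilon(t)}{2}\|x^*\|^2$, with the bound already proved in \eqref{Lyap-basic1}, namely $E_p(t)\leq C/\gamma(t)$ for some constant $C$ that is finite precisely because of the integrability hypothesis on $G(s)\gamma(s)$. Writing
\[
f(x(t))-\min_{\cH}f\leq \frac{C}{\gamma(t)}+\frac{\|x^*\|^2}{2}\cdot\frac{\varepsilon(t)\gamma(t)}{\gamma(t)},
\]
the additional assumption $\sup_{t\geq t_0}\bigl(e^{\int_{t_0}^{t}\mu(\tau)d\tau}\varepsilon(t)\bigr)<+\infty$, equivalent to $\sup_{t\geq t_1}\gamma(t)\varepsilon(t)<+\infty$, delivers the announced rate $\mathcal O\bigl(1/e^{\int_{t_0}^{t}\mu(\tau)d\tau}\bigr)$.

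The only delicate point I anticipate is purely bookkeeping: one must be careful that the two exponential weights $e^{\int_{t_0}^{t}\mu}$ and $\gamma(t)=e^{\int_{t_1}^{t}\mu}$ differ only by a harmless positive multiplicative constant, so that the integrability hypothesis and the supremum condition translate faithfully between the two. Aside from this, the argument is a direct integration of a Gronwall-type inequality where the sign $\lambda/\delta-1<0$, guaranteed by the parameter range in $(\mathcal H_p)$, is what makes the absorption of the gradient term into the left-hand side possible.
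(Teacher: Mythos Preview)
Your proposal is correct and follows essentially the same route as the paper: start from \eqref{est14}, multiply by the integrating factor $\gamma(t)$ while retaining the gradient term (rather than discarding it as in \eqref{15}), integrate on $[t_1,t]$, and then combine the resulting boundedness of $W_p(t)=\gamma(t)E_p(t)$ with Lemma~\ref{lem-basic-b} and the hypothesis $\sup_t \gamma(t)\varepsilon(t)<+\infty$ to get the value rate. Your explicit remark that $e^{\int_{t_0}^{t}\mu}$ and $\gamma(t)=e^{\int_{t_1}^{t}\mu}$ differ only by a positive constant is a useful clarification that the paper leaves implicit.
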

\begin{proof}
Going back to the inequality \eqref{est14} of the proof of Theorem \ref{strong-conv-thm-b}, we have
\begin{eqnarray}
\dot{W}_p(t)+\gamma(t)\left(1-\frac{\lambda}{\delta}\right)\|\nabla \varphi_{t}(x(t))\|^{2}&=&\gamma(t)\Big(\dot{E}_p(t)+\mu(t)E_p(t)+\left(1-\frac{\lambda}{\delta}\right)\|\nabla \varphi_{t}(x(t))\|^{2} \Big) \nonumber
\\
&\leq&  \frac{\|x^{*}\|^{2}}{2}G(t)\gamma(t), \label{est_grad_10}
\end{eqnarray}
	where $G(t)=\left((\lambda c+2a)\lambda\dfrac{\dot{\varepsilon}^{2}(t)}{\varepsilon^{\frac{3}{2}}(t)}-\dot{\varepsilon}(t)+(1-p)\beta \lambda(\delta-\lambda)\varepsilon^{2}(t)\right)$ and $\gamma(t)=\exp\left(\displaystyle \int_{t_1}^{t} \mu(s)ds\right).$\\  
By integrating \eqref{est_grad_10} on $[t_1,t],$ we get 
\begin{equation}\label{}
\left(1-\frac{\lambda}{\delta}\right)\int_{t_1}^{t}\gamma(s)\|\nabla \varphi_{s}(x(s))\|^{2}ds + W_p(t)\leq W_p(t_1)+ 
\frac{\|x^{*}\|^{2}}{2} \int_{t_1}^{t} G(s)\gamma(s)ds .
\end{equation}
Since $\int_{t_0}^{+\infty}\gamma(s)G(s)ds <+\infty$, we infer  that
$\int_{t_0}^{+\infty}\gamma(s)\|\nabla \varphi_{s}(x(s))\|^{2}ds<+\infty$ and for $t\geq t_0,$ $W_p(t)$ is bounded. From
$$\gamma(t)\left(f(x(t))-\min_{\mathcal H}f\right)	
\leq  W_p(t)+\dfrac{\gamma(t)\varepsilon(t)}{2}\|x^*\|^{2}$$
and  since  $\gamma(t)\varepsilon(t)$ is bounded for $t\leq t_0,$ we conclude that 
$f(x(t))-\min_{\mathcal H}f=\mathcal O \left( \displaystyle\dfrac{1}{\gamma(t) }   \right).$  \qed
 \end{proof}
 
 }
 \fi

	\section{Particular cases}\label{sec:particular-cases}
Take $\e(t)=\displaystyle\frac{1}{t^{r} } $,  $0<r<2$, $t_0>0$, and consider the systems (TRISHE) and (TRISH). The convergence rate of the values and the strong convergence to the minimum norm solution will be obtained by particularizing  Theorem \ref{strong-conv-thm-b} to these situations. In these cases, the integrals that enter into the formulation of Theorem \ref{strong-conv-thm-b} can be calculated explicitly. 
In fact, obtaining sharp convergence rate of the gradients requires another Lyapunov analysis based on the function
 $\mathcal{E}_p$ defined by 
\begin{equation}\label{def:Ep}
\mathcal{E}_p(t)\eqdef 
	\left(\varphi_{t}(x(t))-\varphi_{t}(x_{\varepsilon(t)})\right) +\dfrac{1}{2}\|\dot{x}(t)+\beta\left[\nabla \varphi_{t}(x(t))+(p-1)\varepsilon(t)x(t)\right]\|^{2}.
	\end{equation}
We can notice that when $\lambda=0,$ we have $\mathcal{E}_p(t)=E_p(t)$. So, with $\lambda=0,$ the estimation \eqref{9} becomes
\begin{eqnarray}
\dot{\mathcal{E}}_p(t)=\dot{E}_p(t)&\leq  & \dfrac{1}{2}\left[\dot{\varepsilon}(t)+\beta(p-1)^{2}\varepsilon^{2}(t)
\right]\|x(t)\|^{2} -\dfrac{1}{2}\dot{\varepsilon}(t)\|x_{\varepsilon(t)}\|^{2}-\dfrac{\delta}{2}\sqrt{\varepsilon(t)}\|\dot{x}(t)\|^{2}  \nonumber\\
&&-\dfrac{\delta}{2}\sqrt{\varepsilon(t)}\|\dot{x}(t)+\beta A_p (t)\|^{2}+ \dfrac{\beta}{2}\left[-1+\beta \delta\sqrt{\varepsilon(t)})\right]\|A_p (t)\|^2-\dfrac{\beta}{2}\|\nabla \varphi_{t}(x(t))\|^{2}. \nonumber
\end{eqnarray}
By supposing $\delta\beta\leq\dfrac{1}{\sqrt{\varepsilon(t)}},$ we conclude that
\begin{eqnarray}\label{key-bbb}
\dot{\mathcal{E}}_p(t)&\leq  & \dfrac{1}{2}\left[\dot{\varepsilon}(t)+\beta(p-1)^{2}\varepsilon^{2}(t)
\right]\|x(t)\|^{2} -\dfrac{1}{2}\dot{\varepsilon}(t)\|x^{*}\|^{2}-\dfrac{\delta}{2}\sqrt{\varepsilon(t)}\|\dot{x}(t)\|^{2}  \nonumber\\
&&-\dfrac{\delta}{2}\sqrt{\varepsilon(t)}\|\dot{x}(t)+\beta A_p (t)\|^{2}- \dfrac{\beta}{2}\|\nabla \varphi_{t}(x(t))\|^{2}.
\end{eqnarray}

\subsection{System (TRISHE)}	
	\begin{theorem}\label{thm:model-a}
	Take $\e(t)=\displaystyle\frac{1}{t^{r} } $ and  $0<r<2$.
	Let $x : [t_0, +\infty[ \to \mathcal{H}$ be a solution trajectory of
	\begin{equation}\label{eqr1}
	\ddot{x}(t) +\left( \frac{\d}{ \displaystyle{t^{\frac{r}{2}}}}+\frac{\beta}{t^{r}}\right)\dot{x}(t) +\beta\nabla^{2} f\left(x(t) \right)\dot{x}(t)+ \nabla f\left(x(t) \right)+ \left(\frac{1}{t^r}-\frac{r\beta}{t^{r+1}}\right) x(t)=0.
	\end{equation}		
	Then, we have  convergence of  values,  strong convergence to the minimum norm solution, and
	\begin{eqnarray}
	&& 
	f(x(t))-\min_{\cH} f= \mathcal O \left( \displaystyle\frac{1}{t^{r} }   \right) \mbox{ as } \; t \to +\infty;\\
	&& \|x(t) -x_{\varepsilon(t)}\|^2=\mathcal{O}\left(\dfrac{1}{ t^{\frac{2-r}2}}\right) \mbox{ as } \; t \to +\infty.\\
	&&
	\|\dot{x}(t)+\beta \nabla f (x(t))\|=\mathcal{O}\left(\dfrac{1}{ t^{\min(\left(\frac{2+r}4,r\right)}}\right) 
	\mbox{ as } \; t \to +\infty.
	\end{eqnarray}
In addition, we have the following integral estimates 
	$$\int_{t_1}^{+\infty}t^{r-1}\Vert\dot{x}(t)\Vert^2 dt<+\infty, \quad \int_{t_1}^{+\infty}t^{(\frac{3r}{2}-1)}\Vert\nabla f(x(t))\Vert^2 dt<+\infty.$$
\end{theorem}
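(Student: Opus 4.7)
My plan is to specialize Corollary~\ref{corol1} (the case $p=1$) to $\varepsilon(t)=t^{-r}$, obtain a decay rate on $E_1(t)$, and translate this into the pointwise statements via Lemma~\ref{lem-basic-b} and into the integral statements via the alternative inequality \eqref{key-bbb}.

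First, I check condition $(\mathcal H_1)$: with $0<r<2$ one has $\frac{d}{dt}(1/\sqrt{\varepsilon(t)})=\tfrac{r}{2}t^{r/2-1}\to 0$ and $\delta\beta\leq t^{r/2}$ eventually, so any admissible $\lambda$ fulfills both inequalities of $(\mathcal H_1)$ for $t$ beyond some $t_1$. The ingredients of Corollary~\ref{corol1} then read
\begin{equation*}
\mu(t)=\tfrac{r}{2t}+\tfrac{\delta-\lambda}{t^{r/2}},\qquad
G(t)=(\lambda c+2a)\lambda\tfrac{r^2}{t^{r/2+2}}+\tfrac{r}{t^{r+1}},
\end{equation*}
with $\gamma(t)\sim C\,t^{r/2}\exp\bigl(\tfrac{\delta-\lambda}{1-r/2}t^{1-r/2}\bigr)$ and dominant asymptotics $\mu(t)\sim(\delta-\lambda)t^{-r/2}$, $G(t)\sim rt^{-(r+1)}$. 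L'Hopital applied to $\gamma(t)^{-1}\int_{t_1}^{t}G(s)\gamma(s)\,ds$ yields $G(t)/\mu(t)\sim\tfrac{r}{\delta-\lambda}\,t^{-(r+2)/2}$, while the initial-data term $\gamma(t_1)E_1(t_1)/\gamma(t)$ decays faster than any polynomial; hence $E_1(t)=\mathcal O(t^{-(r+2)/2})$.

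Plugging this bound into Lemma~\ref{lem-basic-b} gives $\|x(t)-x_{\varepsilon(t)}\|^2\leq 2E_1(t)/\varepsilon(t)=\mathcal O(t^{-(2-r)/2})$, and $f(x(t))-\min f\leq E_1(t)+\tfrac{\varepsilon(t)}{2}\|x^*\|^2=\mathcal O(t^{-r})$ since $(r+2)/2>r$; strong convergence to $x^*$ then follows from \eqref{2b}. For the pointwise gradient bound I expand $v_1(t)=\lambda\sqrt{\varepsilon(t)}(x(t)-x_{\varepsilon(t)})+\dot x(t)+\beta\nabla\varphi_t(x(t))$: combining $\|v_1(t)\|\leq\sqrt{2E_1(t)}=\mathcal O(t^{-(r+2)/4})$ with $\lambda\sqrt{\varepsilon(t)}\|x(t)-x_{\varepsilon(t)}\|=\mathcal O(t^{-(r+2)/4})$ gives $\|\dot x(t)+\beta\nabla\varphi_t(x(t))\|=\mathcal O(t^{-(r+2)/4})$, and subtracting $\beta\varepsilon(t)x(t)=\mathcal O(t^{-r})$ yields $\|\dot x(t)+\beta\nabla f(x(t))\|=\mathcal O(t^{-\min((r+2)/4,\,r)})$.

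For the integral estimates I turn to \eqref{key-bbb} with $p=1$: after discarding $\tfrac{\dot\varepsilon}{2}\|x\|^2\leq 0$ and using $\|x_{\varepsilon(t)}\|\leq\|x^*\|$, it reduces to
\begin{equation*}
\dot{\mathcal E}_1(t)+\tfrac{\delta}{2}\sqrt{\varepsilon(t)}\,\|\dot x(t)\|^2+\tfrac{\beta}{2}\|\nabla\varphi_t(x(t))\|^2\leq -\tfrac{\dot\varepsilon(t)}{2}\|x^*\|^2.
\end{equation*}
Multiplying by $t^{3r/2-1}$ produces exactly the desired weights $t^{r-1}$ on $\|\dot x\|^2$ and $t^{3r/2-1}$ on $\|\nabla\varphi_t\|^2$; on the right, $r\int s^{r/2-2}\,ds<\infty$ because $r<2$, while integration by parts on $\int s^{3r/2-1}\dot{\mathcal E}_1(s)\,ds$ yields the bounded boundary term $t^{3r/2-1}\mathcal E_1(t)=\mathcal O(t^{r-2})$ plus a remainder $(3r/2-1)\int s^{3r/2-2}\mathcal E_1(s)\,ds=\mathcal O(\int s^{r-3}\,ds)<\infty$, where I use the elementary inequality $\mathcal E_1\leq(3+2\lambda^2)E_1$ (obtained by applying $\|a+b\|^2\leq 2\|a\|^2+2\|b\|^2$ to $v_1$) to transfer the $E_1$-rate to $\mathcal E_1$. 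Finally, $\|\nabla f\|^2\leq 2\|\nabla\varphi_t\|^2+2\varepsilon(t)^2\|x(t)\|^2$ together with boundedness of $\|x(t)\|$ converts the weighted bound on $\nabla\varphi_t$ into the stated one on $\nabla f$ (the additional integral $\int t^{-r/2-1}\,dt$ is trivially finite). The main obstacle is this last weighted-bookkeeping step, in particular propagating the decay rate from $E_1$ (a $\lambda>0$ object) to $\mathcal E_1$ (the $\lambda=0$ analogue) and checking that every exponent remains strictly integrable right up to the critical case $r=2$.
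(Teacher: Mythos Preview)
Your argument is correct and follows the paper's route closely: specialize Corollary~\ref{corol1} to $\varepsilon(t)=t^{-r}$, extract $E_1(t)=\mathcal O(t^{-(r+2)/2})$, read off the pointwise claims via Lemma~\ref{lem-basic-b} and the definition of $v_1$, then use \eqref{key-bbb} with $p=1$, multiply by $t^{3r/2-1}$ and integrate by parts for the integral estimates. The one genuine difference is how you obtain the decay of $\gamma(t)^{-1}\int_{t_1}^t G(s)\gamma(s)\,ds$: the paper constructs an explicit antiderivative, observing that for suitable $\rho>0$ the integrand is dominated by $\frac{d}{ds}\bigl(\tfrac{1}{\rho s}\exp(\delta_0 s^{(2-r)/2})\bigr)$, whereas you invoke L'H\^opital. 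Your shortcut is legitimate but, as written, slightly loose: plain L'H\^opital only gives that the ratio tends to $0$, not its rate; to get the $\mathcal O(t^{-(r+2)/2})$ bound you should apply L'H\^opital to the quotient with denominator $\gamma(t)t^{-(r+2)/2}$ (which still diverges, and whose derivative ratio tends to the finite constant $r/(\delta-\lambda)$). With that small clarification, your version is cleaner than the paper's explicit integration. Your transfer inequality $\mathcal E_1\le (3+2\lambda^2)E_1$ is exactly what the paper uses implicitly when it bounds the two pieces of $\mathcal E_1$ by \eqref{key10000} and \eqref{vitess}.
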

\begin{proof}
a) By taking $\e(t)=\displaystyle\frac{1}{t^{r} } $ in Corollary \ref{corol1}, we get \eqref{eqr1}. So if  $( \mathcal{H}_1)$ is satisfied,
 we  get 
	\begin{equation}\label{Lyap-basic2b}
	E_1(t)\leq \dfrac{\|x^{*}\|^{2}}{2\gamma(t)}\displaystyle \int_{t_1}^{t}\left( (\lambda c+2a)\lambda\dfrac{\dot{\varepsilon}^{2}(s)}{\varepsilon^{\frac{3}{2}}(s)}-\dot{\varepsilon}(s)\right) \gamma(s) ds+ \dfrac{\gamma(t_1)E_1(t_1)}{\gamma(t)}.
	\end{equation}
Using the same technique as before, we start by choosing the parameters  $a>1, c> 2$, 
 $\lambda>0$ such that
 \begin{center}
 	$\frac{\delta}{2}< \lambda <\frac{a}{a+1}\delta$ for $ 0 <\delta \leq \sqrt{2}-\frac{1}{c}$ and $\frac{\delta}{2} < \frac12\left(\delta+\frac{1}{c}+\sqrt{(\delta+\frac{1}{c})^{2}-2}\right) < \lambda <\frac{a}{a+1}\delta $ for $\delta > \sqrt{2}-\frac{1}{c}$.
 \end{center}
We can easily check that for $r<2$ and for  $t\geq t_1$ large enough, 
$$
\dfrac{d}{dt}\left(\dfrac{1}{\sqrt{\varepsilon(t)}}\right) =\frac{r}2t^{\frac{r-2}{2}}
\leq \min\left(2\lambda-\delta\; , \; \frac{1}{2} \left(\delta-\frac{a+1}{a}\lambda \right)\right),
\quad \mbox{and}\quad \beta \delta \leq t^{\frac{r}{2}}.$$
This expresses that the condition $( \mathcal{H}_1)$ holds. With the notations of  Theorem \ref{strong-conv-thm-b}, we have
	\begin{eqnarray}
	\mu(t)&=&-\dfrac{\dot{\varepsilon}(t)}{2\varepsilon(t)}+(\delta-\lambda)\sqrt{\varepsilon(t)}=\dfrac{r}{2t}+ \dfrac{\delta -\lambda}{t^{\frac{r}2}} \label{mu}\\
	\gamma(t)&=&\exp\left(\displaystyle \int_{t_1}^{t} \mu(s)ds\right) =  \left(\dfrac{t}{t_1}\right)^{\frac r2} \exp\left[\frac{2(\delta-\lambda)}{2-r}\left(t^{\frac{2-r}2} - t_1^{\frac{2-r}2} \right)\right] \nonumber\\
	& =& 
	C_1t^{\frac r2} \exp\left[\frac{2(\delta-\lambda)}{2-r}t^{\frac{2-r}2} \right]\quad \mbox{where} \:C_1=\left(t_1^{\frac r2} \exp\left[\frac{2(\delta-\lambda)}{2-r}t_1^{\frac{2-r}2} \right]\right)^{-1}. \label{gamma-est-1}
	\end{eqnarray}
Setting  $\lambda_0:=(\lambda c+2a)\lambda , \; \delta_0:=\dfrac{2(\delta-\lambda)}{2-r},$ and  replacing $\varepsilon (t)$ and $\gamma(t)$ by their values in
	\eqref{Lyap-basic2b}, we get
	\begin{equation}\label{key1000}
	\begin{array}{lll}
E_{1}(t)	&  \leq& \dfrac{r \|x^{*}\|^{2}}{2 t^{\frac r2} \exp\left(\delta_0t^{\frac{2-r}2} \right)}\displaystyle     \int_{t_1}^{t}\left( \dfrac{\lambda_0r}{s^2 } + \frac{1}{s^{\frac{r+2}2}}\right)\exp\left(\delta_0s^{\frac{2-r}2}\right)ds+\dfrac{\gamma(t_1)E_1(t_1)}{\gamma(t)} \\ 
	\end{array}
	\end{equation}	
Then notice that
$$
\dfrac{d}{ds}\left( \dfrac{1}{\rho s  } \exp\left(\delta_0 s^{\frac{2-r}2}\right)\right)= \left(  -\dfrac{1}{\rho s^2 } + \dfrac{\delta_0(2-r)}{2\rho s^{\frac{r+2}2}}   
\right)\exp\left(\delta_0s^{\frac{2-r}2}\right).
$$
For $s$ large enough, by taking $0<\rho < \frac1{a+1}\delta$, we  have $\dfrac{\lambda_0r}{s^2 } + \dfrac{1}{s^{\frac{r+2}2}} \leq -\dfrac{1}{\rho s^2 } + \dfrac{\delta_0(2-r)}{2\rho s^{\frac{r+2}2}}, $ which gives
\begin{equation*}
\begin{array}{lll}
E_{1}(t)	&  \leq& \dfrac{r}{2 t^{\frac r2} \exp\left(\delta_0t^{\frac{2-r}2} \right)}\displaystyle \int_{t_1}^{t}\left(  -\dfrac{1}{\rho s^2 } + \dfrac{\delta_0(2-r)}{2\rho s^{\frac{r+2}2}}  	\right)\exp\left(\delta_0s^{\frac{2-r}2}\right)ds+\dfrac{\gamma(t_1)E_1(t_1)}{\gamma(t)}\\
&=& \dfrac{1}{2  t^{\frac r2} \exp\left(\delta_0t^{\frac{2-r}2} \right)}\displaystyle\int_{t_1}^{t}\dfrac{d}{ds}\left( \dfrac{1}{\rho s } \exp\left(\delta_0s^{\frac{2-r}2}\right)\right)ds+\dfrac{\gamma(t_1)E_1(t_1)}{\gamma(t)}\\
&=& \dfrac{r}{ 2\rho t^{\frac{r+2}2}} - \dfrac{r}{ t^{\frac{r}2}\exp\left(\delta_0t^{\frac{2-r}2} \right)}  \dfrac{1}{2\rho t_1 } \exp\left(\delta_0t_1^{\frac{2-r}2}\right)+\dfrac{\gamma(t_1)E_1(t_1)}{\gamma(t)}
\leq \dfrac{r}{2\rho t^{\frac{r+2}2}}+\dfrac{\gamma(t_1)E_1(t_1)}{\gamma(t)}.
\end{array}
\end{equation*}
We have $\dfrac{\gamma(t_1)E_1(t_1)}{\gamma(t)} \leq C t^{-\frac r2} \exp\left[-\delta_0t^{\frac{2-r}2} \right]$.  Since $0 <r<2$ and $\delta_0 > 0$, we deduce that  $\dfrac{\gamma(t_1)E_1(t_1)}{\gamma(t)}$ tends to zero at an exponential rate, as $t \to +\infty.$ Therefore, there exists a positive constant $C$ such that for $t$ large enough
\begin{equation}\label{key10000}
E_1(t) \leq \dfrac{C}{ t^{\frac{r+2}{2}}}.
\end{equation}
By  Lemma \ref{lem-basic-b}, we deduce that  there exists positive constants $C$ and $M$ such that, for $t$ large enough,
$$
f(x(t))-\min_{\mathcal H}f \leq C \left(\dfrac{1}{ t^{\frac{r+2}{2}}}  + \frac{1}{t^r} \right),\quad \|x(t) - x_{\varepsilon(t)}\|^2  \leq \frac{2E_{p}(t)}{\varepsilon(t)}\leq \frac{2C}{t^{\frac{2-r}2}}, \; \mbox{ and } $$
\begin{equation}\label{vitess}
\|\dot{x}(t)+\beta\nabla \varphi_{t}(x(t))+(p-1)\varepsilon(t)x(t)\|^2  \leq M E_p(t)\leq \frac{MC}{t^{\frac{2+r}2}}
\end{equation}
Since $0<r<2,$   we conclude  that
$$
f(x(t))-\min_{\cH} f=\mathcal O \left( \displaystyle{ \frac{1}{t^{r}} }   \right) ,\quad 
\|x(t)-x_{\varepsilon(t)}\|^{2}=\mathcal{O}\left(\dfrac{1}{ t^{\frac{2-r}2}}\right),\mbox{ as } \; t \to +\infty.
$$
By \eqref{vitess}, we have 
$$\|\dot{x}(t)+\beta \nabla f (x(t))\|\leq \|\dot{x}(t)+\beta \nabla \varphi_{t}(x(t))\|+\varepsilon(t)\|x(t)\|\leq \frac{\sqrt{MC}}{t^{\frac{2+r}4}}+\frac{1}{t^{r}}\|x(t)\|.$$
Since $x$ is bounded, 
we conclude that, as $ \; t \to +\infty$,
$$
\|\dot{x}(t)+\beta \nabla f (x(t))\|=
\left\{\begin{array}{lcl}
\mathcal{O}\left(\dfrac{1}{ t^{\frac{2+r}4}}\right) & \mbox{ if } & r\in[\frac{2}{3},2[\\
\mathcal{O}\left(\dfrac{1}{ t^{r}}\right) & \mbox{ if } & r\in]0,\frac{2}{3}[.
\end{array}\right.
$$
b) We now come to the integral estimates of the velocities and gradient terms. For this, we use the
pointwise estimates already established, and proceed with the Lyapunov function $\mathcal{E}_p$ defined in \eqref{def:Ep}.	
The system (TRISHE) corresponds to $p=1$, so we consider
$$\mathcal{E}_1(t)\eqdef 
\left(\varphi_{t}(x(t))-\varphi_{t}(x_{\varepsilon(t)})\right) +\dfrac{1}{2}\|\dot{x}(t)+\beta\nabla \varphi_{t}(x(t))\|^{2}.$$
Since for  $t>t_1,$ $\delta\beta\leq\dfrac{1}{\sqrt{\varepsilon(t)}},$ then according to \eqref{key-bbb}, we have 
	\begin{eqnarray}\label{keyabc}
	\dot{\mathcal{E}}_1(t)&\leq  & \dfrac{1}{2}\dot{\varepsilon}(t)\|x(t)\|^{2} -\dfrac{1}{2}\dot{\varepsilon}(t)\|x^{*}\|^{2}-\dfrac{\delta}{2}\sqrt{\varepsilon(t)}\|\dot{x}(t)\|^{2}  \nonumber\\
	&&-\dfrac{\delta}{2}\sqrt{\varepsilon(t)}\|\dot{x}(t)+\beta \nabla \varphi_{t}(x(t))\|^{2}- \dfrac{\beta}{2}\|\nabla \varphi_{t}(x(t))\|^{2} \nonumber\\
	&\leq& -\dfrac{1}{2}\dot{\varepsilon}(t)\|x^{*}\|^{2}-\dfrac{\delta}{2}\sqrt{\varepsilon(t)}\|\dot{x}(t)\|^{2}- \dfrac{\beta}{2}\|\nabla \varphi_{t}(x(t))\|^{2}
	\end{eqnarray}
Equivalently,
	$$\dfrac{\delta}{2}\sqrt{\varepsilon(t)}\|\dot{x}(t)\|^{2}
+ \dfrac{\beta}{2}\|\nabla \varphi_{t}(x(t))\|^{2}\leq-\dot{\mathcal{E}}_1(t)-\dfrac{1}{2}\dot{\varepsilon}(t)\|x^{*}\|^{2}. $$
By multiplying this last equality by $t^{\frac{3r}{2}-1}$ and integrating on $ [t_1,T],$ we get 
\begin{eqnarray}\label{estintgral}
\dfrac{\delta}{2}\int_{t_1}^{T}t^{r-1}\|\dot{x}(t)\|^{2}dt
+ \dfrac{\beta}{2}\int_{t_1}^{T}t^{\frac{3r}{2}-1}\|\nabla \varphi_{t}(x(t))\|^{2}dt\leq-\int_{t_1}^{T}t^{\frac{3r}{2}-1}\dot{\mathcal{E}}_1(t)dt+\dfrac{\|x^{*}\|^{2}}{2}\int_{t_1}^{T}t^{\frac{r}{2}-2}dt.
\end{eqnarray}
We have 
\begin{eqnarray}\label{key20}
&&-\int_{t_1}^{T}t^{\frac{3r}{2}-1}\dot{\mathcal{E}}_1(t)dt= \left(t_1^{\frac{3r}{2}-1}\mathcal{E}_1(t_1)-t^{\frac{3r}{2}-1}\mathcal{E}_1(t)\right)+\frac{3r-2}{2}\int_{t_1}^{T}t^{\frac{3r}{2}-2}\mathcal{E}_1(t)dt  \nonumber\\
&\leq & t_1^{\frac{3r}{2}-1}\mathcal{E}_1(t_1)+\frac{3r-2}{2}\int_{t_1}^{T}t^{\frac{3r}{2}-2}\left(\varphi_{t}(x(t))-\varphi_{t}(x_{\varepsilon(t)})\right)dt +\frac{3r-2}{4}\int_{t_1}^{T}t^{\frac{3r}{2}-2}\|\dot{x}(t)+\beta\nabla \varphi_{t}(x(t))\|^{2}dt \nonumber\\
&\leq &t_1^{\frac{3r}{2}-1}\mathcal{E}_1(t_1)+\frac{3r-2}{2}\int_{t_1}^{T}t^{\frac{3r}{2}-2}E_{1}(t)dt +\frac{3r-2}{4}\int_{t_1}^{T}t^{\frac{3r}{2}-2}\|\dot{x}(t)+\beta\nabla \varphi_{t}(x(t))\|^{2}dt
\end{eqnarray}
According to \eqref{key10000} and \eqref{vitess}, we deduce that there exists $C>0$ such that 
\begin{eqnarray}\label{key30}-\int_{t_1}^{T}t^{\frac{3r}{2}-1}\dot{\mathcal{E}}_1(t)dt
&\leq &t_1^{\frac{3r}{2}-1}\mathcal{E}_1(t_1)+C\int_{t_1}^{T}t^{r-3}dt.
\end{eqnarray}
From this, we deduce that $-\int_{t_1}^{+\infty}t^{\frac{3r}{2}-1}\dot{\mathcal{E}}_1(t)dt<+\infty.$ By \eqref{estintgral}, we conclude that 
$$\dfrac{\delta}{2}\int_{t_1}^{T}t^{r-1}\|\dot{x}(t)\|^{2}dt
+ \dfrac{\beta}{2}\int_{t_1}^{T}t^{\frac{3r}{2}-1}\|\nabla \varphi_{t}(x(t))\|^{2}dt<+\infty.$$
Therefore 
$$\int_{t_1}^{T}t^{r-1}\|\dot{x}(t)\|^{2}dt
<+\infty\quad\mbox{and}\quad \int_{t_1}^{T}t^{\frac{3r}{2}-1}\|\nabla \varphi_{t}(x(t))\|^{2}dt<+\infty.$$
We  also have
$$ 
\Vert\nabla f(x(t))\Vert^2  \leq \left(\Vert\nabla\varphi_t (x(t))\Vert+\frac{1}{t^{r}}\|x(t)\|\right)^{2} 
\leq   2\Vert\nabla\varphi_t (x(t))\Vert^2+\frac{2}{t^{2r}}\|x(t)\|^{2} .
$$ 
Since $x$ is bounded,  we obtain
$
\int_{t_1}^{+\infty}t^{\frac{3r}{2}-1}\Vert\nabla f(x(t))\Vert^2 dt <+\infty.$
This completes the proof. \qed
\end{proof}

\subsection{System (TRISH)}	
We now come to the corresponding result for (TRISH), stated as a model result in the introduction. 
	\begin{theorem}\label{thm:model-aa}
	Take $\delta>2,$  $\e(t)=\displaystyle\frac{1}{t^{r} } $,\; with $1\leq r<2$.
		Let $x : [t_0, +\infty[ \to \mathcal{H}$ be a solution trajectory of
		\begin{equation}\label{particu2}
		\ddot{x}(t) + \frac{\d}{ \displaystyle{t^{\frac{r}{2}}}}\dot{x}(t) +\beta\nabla^{2} f\left(x(t) \right)\dot{x}(t)+ \nabla f\left(x(t) \right)+ \frac{1}{t^r} x(t)=0.
		\end{equation}
	Then,  we have the following  estimates
		\begin{eqnarray}\label{resut1}
		&& f(x(t))-\min_{\cH} f= \mathcal O \left( \displaystyle\frac{1}{t^{r} }   \right) \mbox{ as } \; t \to +\infty;\\
		&& \label{resut2} \|x(t) -x_{\varepsilon(t)}\|^2=\mathcal{O}\left(\dfrac{1}{ t^{\frac{2-r}2}}\right) \mbox{ as} \; t \to +\infty.\\
		&& \label{resut3} \|\dot{x}(t)+\beta \nabla f(x(t))\|=\mathcal{O}\left(\dfrac{1}{ t^{\frac{r+2}4}}\right) \mbox{ as } \; t \to +\infty.\\
		&& \int_{t_1}^{+\infty}t^{r-1}\Vert\dot{x}(t)\Vert^2 dt<+\infty, \quad \int_{t_1}^{+\infty}t^{\frac{3r -2}{2}}\Vert\nabla f(x(t))\Vert^2 dt<+\infty. \label{grad-est}
		\end{eqnarray}
		\end{theorem}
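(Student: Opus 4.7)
I would mirror the proof of Theorem \ref{thm:model-a}, specialising Corollary \ref{corol2} (the $p=0$ case) to $\varepsilon(t)=1/t^r$ and then exploiting the auxiliary Lyapunov function $\mathcal{E}_0$ from \eqref{def:Ep} to reach the integral estimates \eqref{grad-est}. Since $\delta>2=2\sqrt{1-p}$ with $p=0\in[0,\tfrac12]$, one first fixes constants $a>1$, $c>2$ and $\lambda$ in the admissible window
$$\tfrac{1}{2}\bigl(\delta+\tfrac{1}{c}+\sqrt{(\delta+\tfrac{1}{c})^2-2}\bigr)<\lambda<\min\bigl(\tfrac{a}{a+1}\delta,\tfrac{\delta+\sqrt{\delta^2-4}}{2}\bigr).$$
Since $\tfrac{d}{dt}(1/\sqrt{\varepsilon(t)})=(r/2)\,t^{(r-2)/2}\to 0$ (using $r<2$) and $t^{r/2}\to+\infty$, condition $(\mathcal{H}_0)$ is met on some $[t_1,+\infty[$.

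With $\mu(t)=r/(2t)+(\delta-\lambda)/t^{r/2}$ and $\gamma(t)=C_1\,t^{r/2}\exp(\delta_0\,t^{(2-r)/2})$, where $\delta_0=2(\delta-\lambda)/(2-r)$, Corollary \ref{corol2} gives $E_0(t)\leq\tfrac{\|x^*\|^2}{2\gamma(t)}\int_{t_1}^{t}G(s)\gamma(s)\,ds+\gamma(t_1)E_0(t_1)/\gamma(t)$, where
$$G(s)=(\lambda c+2a)\lambda r^2/s^{(r+4)/2}+r/s^{r+1}+\beta\lambda(\delta-\lambda)/s^{2r}.$$
The extra $1/s^{2r}$ term is the only new feature compared to the (TRISHE) proof: the restriction $r\geq 1$ ensures $2r\geq r+1$, so this contribution is dominated by $r/s^{r+1}$ and does not degrade the rate. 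The same differentiation identity used in Theorem \ref{thm:model-a}, namely $\tfrac{d}{ds}\bigl(\tfrac{1}{\rho s}\exp(\delta_0 s^{(2-r)/2})\bigr)=\bigl(-\tfrac{1}{\rho s^2}+\tfrac{\delta_0(2-r)}{2\rho s^{(r+2)/2}}\bigr)\exp(\delta_0 s^{(2-r)/2})$, then delivers $E_0(t)=\mathcal{O}(1/t^{(r+2)/2})$.

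Lemma \ref{lem-basic-b} immediately converts this into $f(x(t))-\min f\leq E_0(t)+\tfrac{1}{2}\varepsilon(t)\|x^*\|^2=\mathcal{O}(1/t^r)$ and $\|x(t)-x_{\varepsilon(t)}\|^2\leq 2E_0(t)/\varepsilon(t)=\mathcal{O}(1/t^{(2-r)/2})$, proving \eqref{resut1} and \eqref{resut2}. For \eqref{resut3}, note that with $p=0$ the quantity $A_0(t)=\nabla\varphi_t(x(t))-\varepsilon(t)x(t)$ equals $\nabla f(x(t))$, so $v_0(t)=\lambda\sqrt{\varepsilon(t)}(x(t)-x_{\varepsilon(t)})+\dot x(t)+\beta\nabla f(x(t))$; combining $\|v_0(t)\|^2\leq 2E_0(t)=\mathcal{O}(1/t^{(r+2)/2})$ with $\sqrt{\varepsilon(t)}\|x(t)-x_{\varepsilon(t)}\|=\mathcal{O}(1/t^{r/2})\cdot\mathcal{O}(1/t^{(2-r)/4})=\mathcal{O}(1/t^{(r+2)/4})$ and the triangle inequality yields $\|\dot x(t)+\beta\nabla f(x(t))\|=\mathcal{O}(1/t^{(r+2)/4})$.

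For \eqref{grad-est} I pass to $\mathcal{E}_0$. Specialising \eqref{key-bbb} at $p=0$ produces
$$\dot{\mathcal{E}}_0(t)+\tfrac{\delta}{2}\sqrt{\varepsilon(t)}\|\dot x(t)\|^2+\tfrac{\beta}{2}\|\nabla\varphi_t(x(t))\|^2\leq\tfrac{1}{2}\bigl[\dot\varepsilon(t)+\beta\varepsilon^2(t)\bigr]\|x(t)\|^2-\tfrac{1}{2}\dot\varepsilon(t)\|x^*\|^2,$$
where the bracket is nonpositive for $t$ large precisely because $r\geq 1$. Multiplying by $t^{3r/2-1}$, integrating on $[t_1,T]$, and performing one integration by parts on the $\dot{\mathcal{E}}_0$ term produces the extra contribution $(3r/2-1)\int t^{3r/2-2}\mathcal{E}_0(t)\,dt$, finite thanks to $\mathcal{E}_0(t)\leq 2E_0(t)+\lambda^2\varepsilon(t)\|x(t)-x_{\varepsilon(t)}\|^2=\mathcal{O}(1/t^{(r+2)/2})$. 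This delivers both $\int t^{r-1}\|\dot x(t)\|^2\,dt<+\infty$ and $\int t^{3r/2-1}\|\nabla\varphi_t(x(t))\|^2\,dt<+\infty$; the bound on $\nabla f$ then follows from $\|\nabla f\|^2\leq 2\|\nabla\varphi_t\|^2+2\varepsilon^2\|x\|^2$ together with the convergence of $\int t^{3r/2-1-2r}\,dt=\int t^{-r/2-1}\,dt$ (valid since $r\geq 1$). The only genuinely new subtlety compared to Theorem \ref{thm:model-a} is the control of these $\varepsilon^2$-terms, and this is precisely the obstacle that forces the restriction $r\geq 1$.
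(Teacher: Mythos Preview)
Your plan for part (a) is essentially identical to the paper's: specialise Corollary~\ref{corol2} to $\varepsilon(t)=t^{-r}$, absorb the extra $\beta\lambda(\delta-\lambda)\varepsilon^2$ term into the dominant $1/s^{(r+2)/2}$ part of the integrand via $3r/2\geq (r+2)/2\Leftrightarrow r\geq 1$, and bound $\int G\gamma$ by the same antiderivative $\tfrac{1}{\rho s}\exp(\delta_0 s^{(2-r)/2})$. The pointwise conclusions \eqref{resut1}--\eqref{resut3} then follow exactly as you indicate.

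For part (b) you take a different route from the paper, and there is one inaccuracy. You invoke \eqref{key-bbb} at $p=0$ and claim the coefficient $\dot\varepsilon(t)+\beta\varepsilon^2(t)=-r\,t^{-r-1}+\beta\,t^{-2r}$ is nonpositive for $t$ large ``precisely because $r\geq 1$''. This is false at the endpoint $r=1$ whenever $\beta>1$: then $\dot\varepsilon(t)+\beta\varepsilon^2(t)=(\beta-1)t^{-2}>0$ for all $t$, and the theorem imposes no upper bound on $\beta$. The fix is immediate: do not drop that term; instead bound it by $\beta\varepsilon^2(t)$, so that after multiplying by $t^{3r/2-1}$ you carry the extra contribution $\tfrac{\beta}{2}\int t^{-r/2-1}\|x(t)\|^2\,dt$, which is finite for every $r>0$ since $x$ is bounded. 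Your final passage from $\nabla\varphi_t$ to $\nabla f$ already uses exactly this integral, so no new ingredient is needed.

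With that correction your argument is valid and somewhat cleaner than the paper's. The paper does \emph{not} use \eqref{key-bbb} for (TRISH); it recomputes $\dot{\mathcal E}_0$ directly from the equation and obtains an inequality featuring $\|\nabla f\|^2$ (rather than $\|\nabla\varphi_t\|^2$) together with the cross term $-\delta\beta\sqrt{\varepsilon(t)}\langle\nabla f(x(t)),\dot x(t)\rangle$. That cross term is then disposed of by a second integration by parts, using $\tfrac{d}{dt}\bigl(f(x(t))-\min f\bigr)=\langle\nabla f(x(t)),\dot x(t)\rangle$ and the rate \eqref{resut1}. Your approach avoids this extra integration by parts at the price of working with $\nabla\varphi_t$ first and converting at the end; both lead to the same estimates.
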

\begin{proof}
a) 
Taking $\e(t)=\displaystyle\frac{1}{t^{r} } $ in Corollary \ref{corol2} gives
 \eqref{particu2}. So if the condition $( \mathcal{H}_0)$ is satisfied,
we  get 
	\begin{equation}\label{Lyap-basic3b}
E_0(t)\leq \dfrac{\|x^{*}\|^{2}}{2\gamma(t)} \displaystyle \int_{t_1}^{t}\left[\left((\lambda c+2a)\lambda\dfrac{\dot{\varepsilon}^{2}(s)}{\varepsilon^{\frac{3}{2}}(s)}-\dot{\varepsilon}(s)+\beta \lambda(\delta-\lambda)\varepsilon^{2}(t)\right) \gamma(s) \right]ds + \dfrac{\gamma(t_1)E_0(t_1)}{\gamma(t)}.
\end{equation}
As in the proof of  Theorem \ref{thm:model-a}, since $r<2, $  take the parameters $a>1, c>2$, 
$\lambda>0$ such that
\begin{center}
$\frac12\left(\delta+\frac{1}{c}+\sqrt{(\delta+\frac{1}{c})^{2}-2}\right)
< \lambda < \min\left(\frac{a}{a+1}\delta, \frac{\delta+\sqrt{\delta^{2}-4}}{2}\right)  .$
\end{center}
For  $t\geq t_1$ large enough, one can prove that
$$
\dfrac{d}{dt}\left(\dfrac{1}{\sqrt{\varepsilon(t)}}\right) =\frac{r}2t^{\frac{r-2}{2}}
\leq \min\left(\frac{a}{a+1}\delta, \; \frac{\delta+\sqrt{\delta^{2}-4}}{2}\right),
\quad \mbox{and}\quad \beta \delta \leq t^{\frac{r}{2}}.$$
This means that the condition $(\mathcal{H}_0)$ is satisfied.
By setting $\lambda_0:=(\lambda c+2a)\lambda , \; \delta_0:=\dfrac{2(\delta-\lambda)}{2-r}$ and $\beta_0=\beta\lambda\left(\delta-\lambda\right)$ and  combining the equations  \eqref{mu} and \eqref{gamma-est-1} with \eqref{Lyap-basic3b},
we get 
	\begin{equation}\label{Lyap-basic1-b}
	E_{0}(t)\leq \dfrac{r}{2 t^{\frac r2} \exp\left(\delta_0t^{\frac{2-r}2} \right)}\displaystyle     \int_{t_1}^{t}\left( \dfrac{\lambda_0r}{s^2 } + \frac{1}{s^{\frac{r+2}2}}+\dfrac{\beta_0}{rs^{\frac{3r}{2}}}\right)\exp\left(\delta_0s^{\frac{2-r}2}\right)ds+\dfrac{\gamma(t_1)E_0(t_1)}{\gamma(t)}.
	\end{equation}
Let us estimate the integral $\displaystyle     \int_{t_1}^{t}\left( \dfrac{\lambda_0r}{s^2 } + \frac{1}{s^{\frac{r+2}2}}+\dfrac{\beta_0}{rs^{\frac{3r}{2}}}\right)\exp\left(\delta_0s^{\frac{2-r}2}\right)ds.$ For $\rho>0$
	$$
	\dfrac{d}{ds}\left( \dfrac{1}{\rho s  } \exp\left(\delta_0 s^{\frac{2-r}2}\right)\right)= \left(  -\dfrac{1}{\rho s^2 } + \dfrac{\delta_0(2-r)}{2\rho s^{\frac{r+2}2}}   
	\right)\exp\left(\delta_0s^{\frac{2-r}2}\right).
	$$
So, we need to show that 
	\begin{equation}\label{integral}
	\dfrac{\lambda_0r}{s^2 } + \dfrac{1}{s^{\frac{r+2}2}}+\dfrac{\beta_0}{rs^{\frac{3r}{2}}} \leq -\dfrac{1}{\rho s^2 } + \dfrac{\delta_0(2-r)}{2\rho s^{\frac{r+2}2}}.
	\end{equation}
	Since 
	$r\geq 1$, we have for $s$ large enough, $\dfrac{\lambda_0r}{s^2 } + \dfrac{1}{s^{\frac{r+2}2}}+\dfrac{\beta_0}{rs^{\frac{3r}{2}}} \leq \dfrac{\lambda_0r}{s^2 } + \dfrac{1+\frac{\beta_0}{r}}{s^{\frac{r+2}2}}. $
	
\noindent 	By taking $\rho < \frac{r}{(a+1)(r+\beta_0)}\delta,$ we have
\begin{equation*}
\begin{array}{lll}
	\dfrac{\lambda_0r}{s^2 } + \dfrac{1+\frac{\beta_0}{r}}{s^{\frac{r+2}2}} \leq -\dfrac{1}{\rho s^2 } + \dfrac{\delta_0(2-r)}{2\rho s^{\frac{r+2}2}}   
& \Longleftrightarrow &
\dfrac{\lambda_0r+\frac1{\rho}}{s^2 }  \leq \left(\dfrac{\delta_0(2-r)}{2\rho}-1-\frac{\beta_0}{r}\right) \dfrac1{s^{\frac{r+2}2}}
= \dfrac{\frac{\delta-\lambda}{\rho}-1-\frac{\beta_0}{r}}{s^{\frac{r+2}2}}\\
	& \Longleftrightarrow &
\dfrac{\lambda_0r+\frac1{\rho}}{s^{\frac{2-r}2} }  \leq \dfrac{\delta-\lambda}{\rho}-1-\dfrac{\beta_0}{r}.
\end{array}
\end{equation*}
We have $0<r<2$, which means that    $\lim_{s \to +\infty} \dfrac{1}{s^{\frac{2-r}2} } =0$.  Combining the fact that $ \lambda<\frac{a}{a+1}\delta,$ with the choice of $\rho,$  one can check that 
	$$
	\delta-\lambda-\rho(1+\dfrac{\beta_0}{r}) > \underbrace{\left(\frac{a}{a+1}\delta-\lambda\right)}_{>0}+ \frac{1}{a+1}\delta- \rho(1+\dfrac{\beta_0}{r}) > -(1+\dfrac{\beta_0}{r})\left(\rho - \frac{r}{(a+1)(r+\beta_0)}\delta \right)> 0.
	$$
Therefore, for $s$ large enough, the last above inequalities are  satisfied, which implies that, for $1\leq r<2,$ and  $t_1$ large enough, we have  
	\begin{eqnarray*}
	E_{0}(t) &\leq& \dfrac{r}{2 t^{\frac r2} \exp\left(\delta_0t^{\frac{2-r}2} \right)}\displaystyle \int_{t_1}^{t}\left(  -\dfrac{1}{\rho s^2 } + \dfrac{\delta_0(2-r)}{2\rho s^{\frac{r+2}2}}   
+\dfrac{\beta_0}{rs^{\frac{3r}{2}}}`	\right)\exp\left(\delta_0s^{\frac{2-r}2}\right)ds+\dfrac{\gamma(t_1)E_0(t_1)}{\gamma(t)}\\
	&=& \dfrac{1}{2  t^{\frac r2} \exp\left(\delta_0t^{\frac{2-r}2} \right)}\displaystyle\int_{t_1}^{t}\dfrac{d}{ds}\left( \dfrac{1}{\rho s } \exp\left(\delta_0s^{\frac{2-r}2}\right)\right)ds+\dfrac{\gamma(t_1)E_0(t_1)}{\gamma(t)}\\
	&=& \dfrac{r}{ 2\rho t^{\frac{r+2}2}} - \dfrac{r}{ t^{\frac{r}2}\exp\left(\delta_0t^{\frac{2-r}2} \right)}  \dfrac{1}{2\rho t_1 } \exp\left(\delta_0t_1^{\frac{2-r}2}\right)+\dfrac{\gamma(t_1)E_0(t_1)}{\gamma(t)}
	\leq \dfrac{r}{2\rho t^{\frac{r+2}2}}+\dfrac{\gamma(t_1)E_0(t_1)}{\gamma(t)}.
	\end{eqnarray*}	
We now proceed in the same way as in the proof of Theorem \ref{thm:model-a}. Since $\dfrac{\gamma(t_1)E_0(t_1)}{\gamma(t)}$ has also an exponential decay to zero,  we deduce that for $t$ large enough,  \eqref{resut1}, \eqref{resut2} and \eqref{resut3} are satisfied.

\noindent b) We now come to the precise integral estimates of the velocities and gradient terms. Parallel to the study for (TRISHE) we proceed with the Lyapunov function $\mathcal{E}_p$ defined in  \eqref{def:Ep}.	
The system (TRISH) corresponds to $p=0$, so we consider
\begin{center}
$\mathcal{E}_0(t)\eqdef 
\left(\varphi_{t}(x(t))-\varphi_{t}(x_{\varepsilon(t)})\right) +\dfrac{1}{2}\|\dot{x}(t)+\beta \nabla f(x(t))\|^{2}.$
\end{center}
Using successively the derivation chain rule,  the  equation (TRISH), and $\dot{\varepsilon}(t) \leq 0$, we get
$$\begin{array}{lll}
\dfrac{d}{dt}\mathcal{E}_0(t)&=&\langle\nabla\varphi_t(x(t)),\dot{x}(t)\rangle+\frac{\dot{\varepsilon}(t)}{2}\|x(t)\|^{2}+\langle\dot{x}(t)+\beta\nabla f(x(t)),\ddot{x}(t)+\beta \nabla^{2} f(x(t))\dot{x}(t) \rangle \vspace{1mm}\\
& = & -\beta \langle\nabla \varphi_t(x(t)) , \nabla f(x(t))\rangle - \delta\sqrt{\varepsilon(t)}\|\dot{x}(t)\|^{2}+\frac{\dot{\varepsilon}(t)}{2}\|x(t)\|^{2}-\delta\beta\sqrt{\varepsilon(t)}\langle\nabla f(x(t)), \dot{x}(t) \rangle \vspace{1mm}\\
& \leq  & -\beta\|\nabla f(x(t))\|^{2}-\beta \varepsilon(t) \langle x(t), \nabla f(x(t))\rangle - \delta\sqrt{\varepsilon(t)}\|\dot{x}(t)\|^{2}-\delta\beta\sqrt{\varepsilon(t)}\langle\nabla f(x(t)), \dot{x}(t) \rangle \vspace{1mm}\\
& \leq  & -\frac{\beta}{2}\|\nabla f(x(t))\|^{2}+\dfrac{\beta\varepsilon^{2}(t)}{2}\| x(t)\|^{2} - \delta\sqrt{\varepsilon(t)}\|\dot{x}(t)\|^{2}-\delta\beta\sqrt{\varepsilon(t)}
\langle\nabla f(x(t)),  \dot{x}(t) \rangle\\
\end{array}
$$
Equivalently,
$$\frac{\beta}{2}\|\nabla f(x(t))\|^{2} + \delta\sqrt{\varepsilon(t)}\|\dot{x}(t)\|^{2}\leq -\dfrac{d}{dt}\mathcal{E}_0(t)+\dfrac{\beta\varepsilon^{2}(t)}{2}\| x(t)\|^{2}-\delta\beta\sqrt{\varepsilon(t)}\langle\nabla f(x(t)), \dot{x}(t) \rangle.$$
By multiplying this last equality by $t^{\frac{3r}{2}-1}$ and integrating on $ [t_1,T],$ we get 
\begin{eqnarray}
&&\frac{\beta}{2}\int_{t_1}^{T}t^{\frac{3r}{2}-1}\|\nabla f(x(t))\|^{2}dt + \delta\int_{t_1}^{T}t^{r-1}\|\dot{x}(t)\|^{2}dt   \nonumber \\
&& \leq -\int_{t_1}^{T}t^{\frac{3r}{2}-1}\dfrac{d}{dt}
\mathcal{E}_0(t)dt+\beta\int_{t_1}^{T}t^{-\frac{r}{2}-1}\| x(t)\|^{2}dt-\delta\beta\int_{t_1}^{T}t^{r-1}\langle\nabla f(x(t)), \dot{x}(t) \rangle dt. \label{part}
\end{eqnarray}
Let us show that the second member of this last inequality converges when $T$ goes to infinity.

$\bullet$ \; For the first term of second member of inequality \eqref{part}, we have 
\begin{eqnarray}
&&-\int_{t_1}^{T}t^{\frac{3r}{2}-1}\dfrac{d}{dt}\mathcal{E}_0(t)dt = -\left[t^{\frac{3r}{2}-1}\mathcal{E}_0(t)\right]^{T}_{t_1}+\left(\frac{3r}{2}-1\right)\int_{t_1}^{T}t^{\frac{3r}{2}-2}
\mathcal{E}_0(t)dt  \nonumber \\
& \leq  & \underbrace{t_{1}^{\frac{3r}{2}-1}\mathcal{E}_0(t_{1})}_{C_1}+\left(\frac{3r}{2}-1\right)\int_{t_1}^{T}t^{\frac{3r}{2}-2}\left(\varphi_t(x(t))-f(x^{*})\right)dt 
+\left(\frac{3r-2}{4}\right)\int_{t_1}^{T}t^{\frac{3r}{2}-2}\|\dot{x}(t)+\beta \nabla f(x(t))\|^{2}dt  \nonumber \\
&\leq&   C_1+\left(\frac{3r-2}{2}\right)\int_{t_1}^{T}t^{\frac{3r}{2}-2}\left(f(x(t))-f(x^{*})\right)dt+\left(\frac{3r-2}{4}\right)\int_{t_1}^{T}t^{\frac{r}{2}-2}\|x(t)\|^{2}dt \nonumber \\
&&+\left(\frac{3r-2}{4}\right)\int_{t_1}^{T}t^{\frac{3r}{2}-2}\|\dot{x}(t)+\beta \nabla f(x(t))\|^{2}dt .\label{new-Lyap-grad}
\end{eqnarray}
Using \eqref{resut3} and \eqref{resut1}, and the fact that $x(\cdot)$ is bounded, we obtain
\begin{equation}\label{part1}
\begin{array}{lll}
-\int_{t_1}^{+\infty}t^{\frac{3r}{2}-1}\dfrac{d}{dt}\mathcal{E}_0(t)dt& \leq  & C_1+C_2\int_{t_1}^{+\infty}t^{\frac{r}{2}-2}dt+C_3\int_{t_1}^{+\infty}t^{r-3}dt<+\infty,\quad \mbox{because}\; r<2 .
\end{array}
\end{equation}

$\bullet$ \; Consider now the third term of the second member of the inequality \eqref{part}. According to the equality  $\dfrac{d}{dt}\left(f(x(t))-f(x^{*})\right)=\langle \nabla f(x(t)), \dot{x}(t)\rangle,$ we have
\begin{eqnarray*}
-\delta\beta\int_{t_1}^{T}t^{r-1}\langle\nabla f(x(t)),   \dot{x}(t) \rangle dt& = &-\delta\beta\left[t^{r-1}\left(f(x(t))-f(x^{*})\right)\right]^{T}_{t_1} \nonumber\\
&+& \delta\beta(r-1)\beta\int_{t_1}^{T}t^{r-2}\left(f(x(t))-f(x^{*})\right)dt  \\
&  \leq & \underbrace{\delta\beta t_{1}^{r-1}\left(f(x(_1))-f(x^{*})\right)}_{C_4} + \delta\beta(r-1)\beta\int_{t_1}^{T}t^{r-2}\left(f(x(t))-f(x^{*})\right)dt. \nonumber
\end{eqnarray*}
 Using \eqref{resut1}, we get 
\begin{equation}\label{part2}
\begin{array}{lll}
-\delta\beta\int_{t_1}^{+\infty}t^{r-1}\langle\nabla f(x(t)),  \dot{x}(t) \rangle dt
&  \leq & C_4 + C_5\int_{t_1}^{+\infty}t^{-2}dt<+\infty.
\end{array}
\end{equation}
Collecting \eqref{part}, \eqref{part1} and \eqref{part2}, we conclude that
\begin{equation*}
\frac{\beta}{2}\int_{t_1}^{+\infty}t^{\frac{3r}{2}-1}\|\nabla f(x(t))\|^{2}dt + \delta\int_{t_1}^{+\infty}t^{r-1}\|\dot{x}(t)\|^{2}dt\leq C+\beta\int_{t_1}^{+\infty}t^{-\frac{r}{2}-1}\| x(t)\|^{2}dt .
\end{equation*}
Using again that $x$ is bounded, we deduce that
$$\frac{\beta}{2}\int_{t_1}^{+\infty}t^{\frac{3r}{2}-1}\|\nabla f(x(t))\|^{2}dt + \delta\int_{t_1}^{+\infty}t^{r-1}\|\dot{x}(t)\|^{2}dt<+\infty.$$	
This completes the proof.	\qed
\end{proof}

\section{Numerical illustrations}\label{num}

Let us illustrate our results with the following examples where the function $f$ is taken successively strictly convex, then convex with a continuum of solutions. In a third example, we compare the two systems (TRISH) and (TRISHE). 
The following numerical experiences 
describe in these three situations the behavior of the trajectories generated by the system  (TRIGS) (without the Hessian driven damping) and by the systems (TRISH) and (TRISHE) (with the Hessian driven damping).   All these systems take into account the effect of  Tikhonov regularization. They are differentiated by the presence, or not, of the Hessian driven damping. According to the model situation described in Theorem \ref{thm:model-a} and Theorem  \ref{thm:model-aa}, the Tikhonov regularization parameter is taken equal to $\e(t)=\displaystyle t^{-r}  $, with $ 0<r\leq 2$. We consider different values of the parameter $r$ which plays a key role in  tuning  the viscosity and Tikhonov parameters. We pay particular attention to the case $r$ close to the value 2, which provides fast convergence results. The corresponding dynamical systems are given by:
\begin{eqnarray*}
&&{\rm (TRIGS)} \quad \quad \ddot{x}(t) + \delta  \displaystyle{t^{-\frac{r}{2}}}\dot{x}(t) + \nabla f\left(x(t) \right)+ \displaystyle{t^{-r}}x(t)=0\\
&&{\rm (TRISH)} \quad\quad  \ddot{x}(t) +  \delta\displaystyle{t^{-\frac{r}{2}}}\dot{x}(t)  + \beta \nabla^2 f\left(x(t) \right)\dot x(t) + \nabla f\left(x(t) \right)+ \displaystyle{t^{-r}} x(t)=0 \vspace{2mm}\\
&&{\rm (TRISHE)} \quad\; \ddot{x}(t)   +  \delta \displaystyle{t^{-\frac{r}{2}}}\dot{x}(t) + \beta \nabla^2 f\left(x(t) \right)\dot x(t) - r\displaystyle{t^{-r-1}}x(t)+ \displaystyle{t^{-r}}\dot x(t)  + \nabla f\left(x(t) \right)+ \frac{1}{t^{r}} x(t)=0 .
\end{eqnarray*}
 We choose $\delta=3$, $\beta=1$.\\
To facilitate the comparison of the trajectories corresponding to different dynamics, for example (TRIGS) and (TRISH), they are represented respectively by continuous lines and dotted lines.
All our numerical tests were implemented in Scilab version 6.1 as an open source software.

\begin{example}\label{exemple1} 
Take $f_1  :  ]-1,+\infty[^{2} \to \R$ which is defined by $$f_1(x)=(x_1+x_2^2)-2\ln(x_1+1)(x_2+1).$$
 The function $f_1$ is strictly convex with 
$$
\nabla f_1(x)=\begin{bmatrix}
1-\frac2{x_1+1}\\
2x_2-\frac2{x_2+1}
\end{bmatrix} \text{ and } 
\nabla^2 f_1(x)=\begin{bmatrix}
\frac2{(x_1+1)^2} & 0\\
0 & 2+\frac2{(x_2+1)^2}
\end{bmatrix} .
$$
The unique minimum of $f_1$ is $x^*=(1,(\sqrt 5-1)/2)$. The corresponding trajectories to the systems ${\rm (TRIGS)}, {\rm (TRISHE)}$ are depicted in Figure \ref{fig:trigs-c}.
\begin{figure} 
 \includegraphics[scale=0.35]{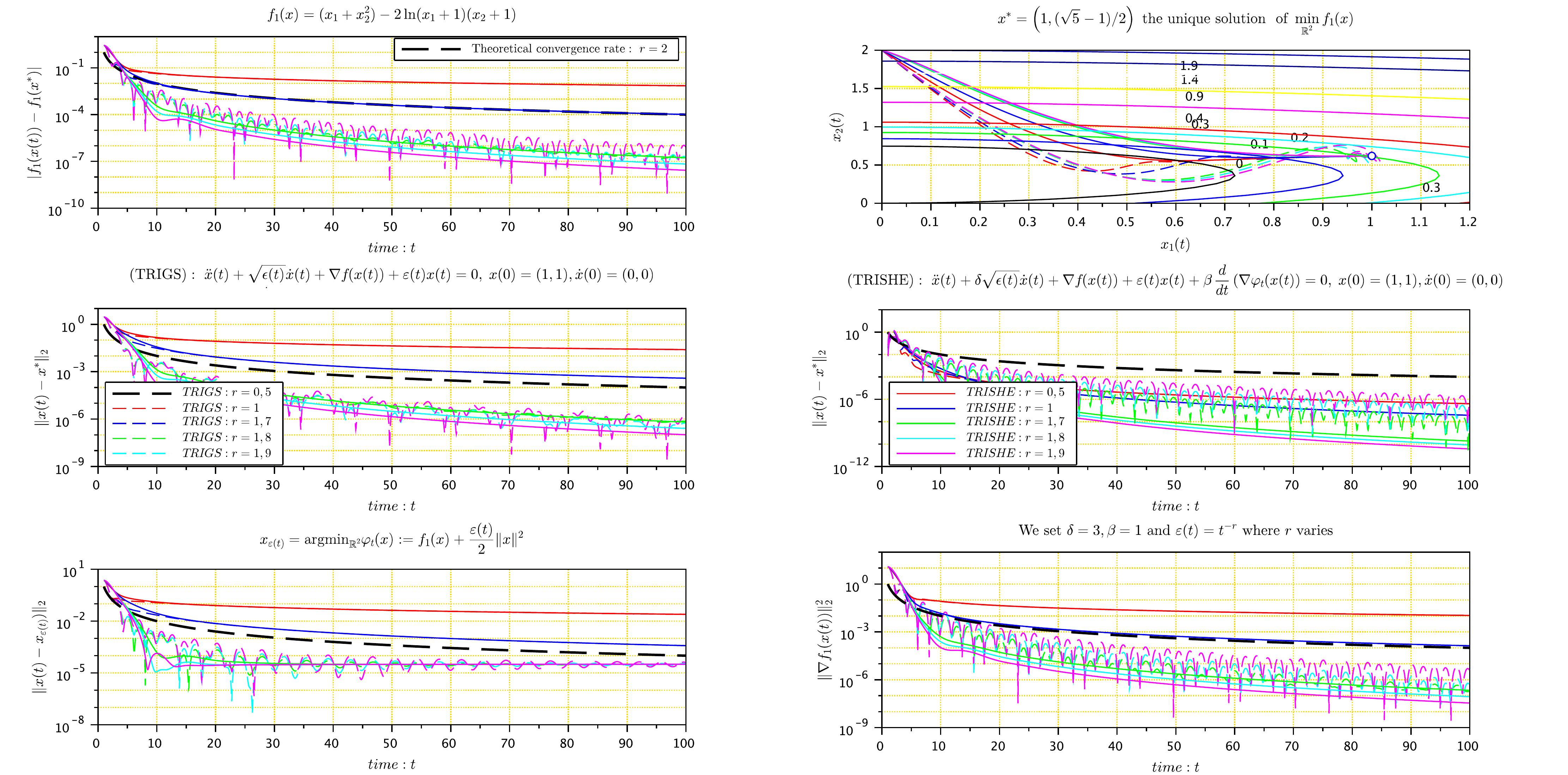}
  \caption{Convergence rates of
values $f_1(x(t))-f(x^*)$,  of trajectories $\|x(t)-x^*\|_2 $, and gradients  \; $\|\nabla f_1(x(t)\|_2$.}
 \label{fig:trigs-c} 
\end{figure}
\end{example}

\begin{example}\label{exemple2} Consider the convex  function $f_2  : \R^{2} \to \R$ defined by 
$$f_2(x)=\frac12(x_1+x_2-1)^2 .$$
 We have
$$
\nabla f_2(x)=\begin{bmatrix}
x_1+x_2-1\\
x_1+x_2-1
\end{bmatrix} \text{ and } 
\nabla^2 f_1(x)=\begin{bmatrix}
1 & 1\\
1 & 1
\end{bmatrix} .
$$
We have  $S_2= \argmin f_2= \{(x_1,1-x_1): x_1\in\R\}$ and $x^*=(\frac12,\frac12)$ is the minimum norm solution. The corresponding trajectories to the systems ${\rm (TRIGS)}, {\rm (TRISHE)}$ are depicted in Figure \ref{fig:trigs-convex}.
\begin{figure} 
  \includegraphics[scale=0.35]{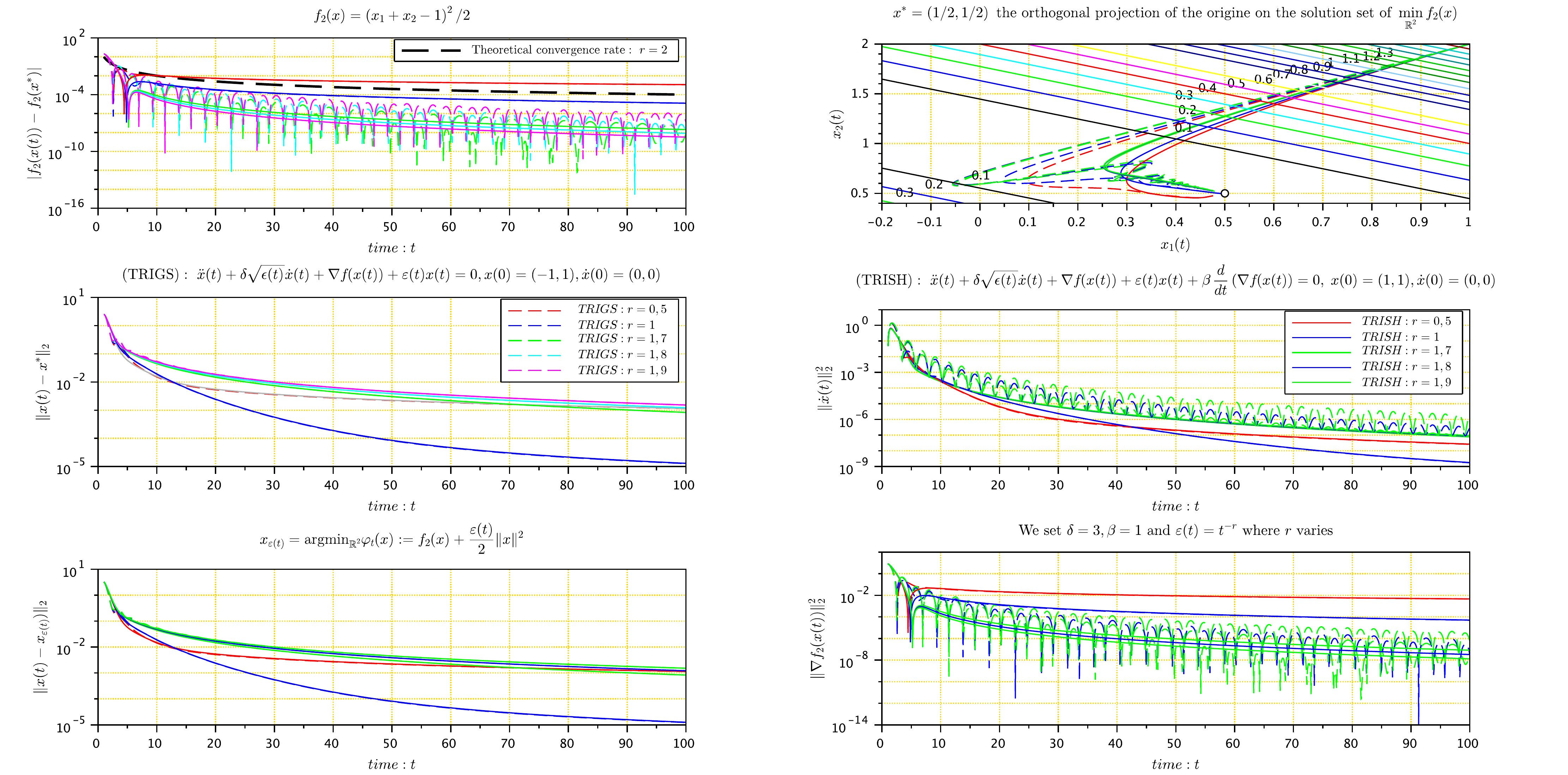}
  \caption{ Convergence rates of
values $f_2(x(t))-f(x^*)$, trajectories $\|x(t)-x^*\|_2$, and  gradients $\|\nabla f_2(x(t)\|_2$.}
 \label{fig:trigs-convex} 
\end{figure}
\end{example}

\begin{example}\label{exemple3} 
To compare the systems (TRISH) and  (TRISHE), we take the same function $f_2$ as in the previous example.
The corresponding trajectories  are depicted in Figure \ref{fig:trish-trishe-convex}.
\begin{figure} 
  \includegraphics[scale=0.35]{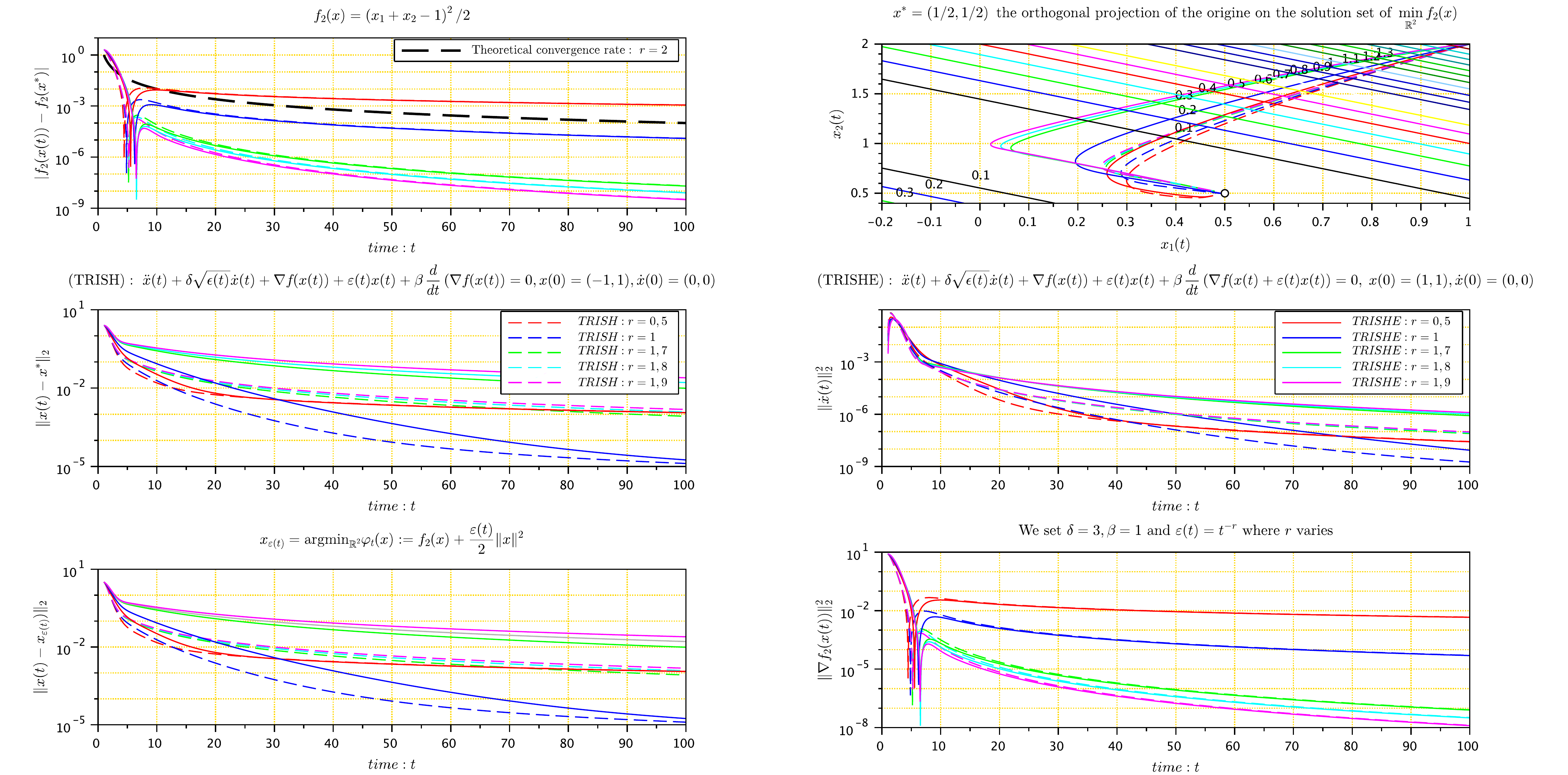}
  \caption{ Comparison of (TRISH) and (TRISHE)}
 \label{fig:trish-trishe-convex} 
\end{figure}
\end{example} 

As predicted by the theory, it is observed that the trajectories generated by  the systems (TRISH) and (TRISHE) have at the same time several remarkable properties: they ensure  fast convergence of the values, fast convergence of the gradients towards zero, and convergence to the  minimum norm solution. The presence of the Hessian driven damping in these dynamics induces a significant attenuation of oscillations (by comparison with (TRIGS)).
The third example shows that the trajectories generated by (TRISH) and (TRISHE) share a very similar behaviour.
We see the advantage of taking $r$ close to $2$ in the presence of the Hessian driven damping. Indeed, $r=2$ gives viscous damping similar  to that of the accelerated gradient method of Nesterov, in which case we  know that the adjustment of the  coefficient $\delta$ plays a crucial role. Note the criticality of the case $r=2$, since for $r<2$ the condition for $\delta$ is $\delta >2$, whereas for $r=2$ we know that we must take $\delta \geq 3$ to get fast convergence.
This is an interesting subject for further research.

\section{Existence of solution trajectories for  (TRISHE)}\label{sec:nonsmooth}

Let us start by establishing the equivalence between the inertial dynamic with Hessian driven damping (TRISHE)
 and a first order system in time and space in the case of a smooth function $f$.
Similar result was first obtained 
in      \cite{AABR},   with applications to mechanics \cite{AMR} and deep learning  \cite{BCPF}. 
\begin{theorem}\label{Thm-first-order-system-existence}
Let $f:\cH\to\R$ be a convex $\cC^2$ function. Suppose that $\alpha\geq0$, $\beta>0$. Let $(x_0,\dot x_0)\in \cH \times \cH$. The 
following statements are equivalent:
\begin{enumerate}[label=\arabic*.]

\item \label{item:Thm-first-order-system_1}
$x:[t_0,+\infty [ \to \cH$ is a solution trajectory of 
\begin{equation}\label{basic-equ-b}
{\rm(TRISHE)} \qquad  \ddot{x}(t)+\delta\sqrt{\varepsilon(t)}\dot{x}(t)+\beta\dfrac{d}{dt}\left(\nabla \varphi_{t}(x(t))\right)+\nabla\varphi_{t}(x(t))=0.
\end{equation}
with the initial conditions $x(t_0)=x_0$, $\dot x(t_0)=\dot x_0$. 

\smallskip

\item \label{item:Thm-first-order-system_2}
$(x,y):[t_0,+\infty [ \to \cH \times \cH$ is a solution trajectory of the first-order system 
\begin{equation}\label{eq:ISEHDequiv1stode}
\begin{cases}
\dot x(t) +  \beta  \nabla \varphi_{t}(x(t)) - \pa{\frac{1}{\beta} - \delta\sqrt{\varepsilon(t)}} x(t) + \frac{1}{\beta}y(t) &= 0 \vspace{1mm}\\
\dot{y}(t)-\pa{\frac{1}{\beta} - \delta\sqrt{\varepsilon(t)} - \frac{\beta \delta}{2} \frac{\dot{\epsilon}(t)}{\sqrt{\varepsilon(t)}      } }x(t) + \frac{1}{\beta} y(t) &= 0,
\end{cases}
\end{equation}
\end{enumerate}
with initial conditions $x(t_0)=x_0$, 
$y(t_0)=-\beta(\dot x_0+\beta \nabla \varphi_{t_0}(x_0))+(1-\delta\sqrt{\varepsilon(t_0)})x_0 $.
\end{theorem}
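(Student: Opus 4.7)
The plan is to prove the equivalence by a direct change of variables that linearizes the Hessian-driven term, following the template used in \cite{AABR} and subsequent works. The first-order system \eqref{eq:ISEHDequiv1stode} is essentially (TRISHE) rewritten in the phase space $\cH\times\cH$, where the auxiliary variable $y$ absorbs the $\nabla\varphi_t(x)$ and $\dot x$ contributions in a way that makes the $\nabla^2 f$ term disappear on the right-hand side. Both implications reduce to straightforward calculus, with one key algebraic identity driving all cancellations.

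For the forward direction (\ref{item:Thm-first-order-system_1}$\Rightarrow$\ref{item:Thm-first-order-system_2}), given a $\cC^2$ solution $x(\cdot)$ of (TRISHE), I would set
\[
y(t) := -\beta\bigl(\dot x(t) + \beta\nabla\varphi_t(x(t))\bigr) + g(t)\,x(t), \qquad g(t) := 1 - \beta\delta\sqrt{\varepsilon(t)},
\]
which reproduces the prescribed initial value $y(t_0)$ and, solved for $\dot x$, reproduces the first line of \eqref{eq:ISEHDequiv1stode} by construction. Then I would differentiate this definition of $y$, invoke (TRISHE) to replace $\ddot x + \beta \frac{d}{dt}\nabla\varphi_t(x)$ by $-\delta\sqrt{\varepsilon(t)}\dot x - \nabla\varphi_t(x)$, and substitute for $\dot x$ using the first line. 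The coefficient of $\nabla\varphi_t(x(t))$ in the resulting expression for $\dot y$ is $\beta - \beta(\beta\delta\sqrt{\varepsilon(t)} + g(t)) = 0$ precisely because $\beta\delta\sqrt{\varepsilon(t)} + g(t) = 1$, so only terms linear in $x(t)$ and $y(t)$ survive. A short computation, using $\dot g(t) = -\frac{\beta\delta}{2}\dot\varepsilon(t)/\sqrt{\varepsilon(t)}$, identifies these linear coefficients with those appearing in the second line of \eqref{eq:ISEHDequiv1stode}.

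For the reverse direction, local existence of a $\cC^1$ solution $(x,y)$ of \eqref{eq:ISEHDequiv1stode} follows from the Cauchy--Lipschitz theorem, since under hypothesis $(\mathcal{A})$ the right-hand side is continuous in $t$ and locally Lipschitz in $(x,y)$. Solving the first line for $y$ gives the same formula $y = -\beta(\dot x + \beta\nabla\varphi_t(x)) + g(t)x$; differentiating in $t$ (legitimate because $f\in\cC^2$ and $\varepsilon\in\cC^1$ automatically promote $x$ to $\cC^2$) yields $\dot y$ as a combination of $\ddot x$, $\frac{d}{dt}\nabla\varphi_t(x)$, $\dot x$ and $x$. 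Equating this with the right-hand side of the second line of \eqref{eq:ISEHDequiv1stode}, and eliminating $y$ by means of the first line, gives a relation that, after collecting terms, is exactly (TRISHE). Matching of the initial data at $t_0$ is immediate from the definition of $y(t_0)$.

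The main obstacle is not conceptual but organizational: keeping track of the cross-cancellations between the affine-in-$x$ contribution $g(t)x$ and the $\dot x$ terms produced by the various substitutions. Once the two identities $\beta\delta\sqrt{\varepsilon(t)} + g(t) = 1$ (which kills the $\nabla\varphi_t$ coefficient on the right-hand side of the $\dot y$ equation) and $\dot g(t) = -\frac{\beta\delta}{2}\dot\varepsilon(t)/\sqrt{\varepsilon(t)}$ (which generates the Tikhonov-derived correction in the coefficient of $x$) are isolated, the calculation is automatic and the equivalence follows.
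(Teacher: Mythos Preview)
Your proposal is correct and follows essentially the same route as the paper's proof. Both directions hinge on defining $y$ via the first line of \eqref{eq:ISEHDequiv1stode} (your formula $y=-\beta(\dot x+\beta\nabla\varphi_t(x))+g(t)x$ with $g(t)=1-\beta\delta\sqrt{\varepsilon(t)}$ is exactly that line solved for $y$), then differentiating and using either (TRISHE) or the second line to close the loop; the paper simply writes out the $2\Rightarrow1$ computation in more detail and sketches $1\Rightarrow2$, while you do the reverse. Your explicit isolation of the identities $\beta\delta\sqrt{\varepsilon(t)}+g(t)=1$ and $\dot g(t)=-\tfrac{\beta\delta}{2}\dot\varepsilon(t)/\sqrt{\varepsilon(t)}$ is a clean way to organize the cancellations. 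One small caveat: your $g(t_0)x_0=(1-\beta\delta\sqrt{\varepsilon(t_0)})x_0$ does not literally match the initial condition printed in the statement, which has $(1-\delta\sqrt{\varepsilon(t_0)})x_0$; the discrepancy is a typographical slip in the statement, and your formula is the one forced by the first equation of \eqref{eq:ISEHDequiv1stode}.
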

\begin{proof}
\textit{{\ref{item:Thm-first-order-system_2} }}$\Rightarrow$ \textit{{\ref{item:Thm-first-order-system_1} }} 
Differentiating the first equation of \eqref{eq:ISEHDequiv1stode} gives
\begin{equation}\label{eq:fos01}
\ddot x(t)  +\beta\dfrac{d}{dt}\left(\nabla \varphi_{t}(x(t))\right)+ \delta \frac{\dot{\epsilon}(t)}{2\sqrt{\varepsilon(t)}} x(t)      - \pa{\frac{1}{\beta} - \delta\sqrt{\varepsilon(t)}} \dot x(t)+\frac{1}{\beta}\dot y(t)  =  0 .
\end{equation}
Replacing $\dot y(t)$ by its expression as given  by the second equation of \eqref{eq:ISEHDequiv1stode} gives
\begin{eqnarray}
&&\ddot x(t) + \beta\dfrac{d}{dt}\left(\nabla \varphi_{t}(x(t))\right)  +\delta \frac{\dot{\epsilon}(t)}{2\sqrt{\varepsilon(t)}} x(t)      - \pa{\frac{1}{\beta} - \delta\sqrt{\varepsilon(t)}} \dot x(t)   \nonumber \\
&&+\frac{1}{\beta}\pa{\pa{ \frac{1}{\beta} - \delta\sqrt{\varepsilon(t)} - \frac{\beta \delta}{2} \frac{\dot{\epsilon}(t)}{\sqrt{\varepsilon(t)}      } } x(t)
 -\frac{1}{\beta} y(t)}  =  0 .\label{eq:fos02}
\end{eqnarray}
Then replace $y(t)$ by its expression as given  by the first equation of \eqref{eq:ISEHDequiv1stode}
\begin{eqnarray*}
&&\ddot x(t) + \beta\dfrac{d}{dt}\left(\nabla \varphi_{t}(x(t))\right)  +\delta \frac{\dot{\epsilon}(t)}{2\sqrt{\varepsilon(t)}} x(t)      - \pa{\frac{1}{\beta} - \delta\sqrt{\varepsilon(t)}} \dot x(t)   \nonumber \\
&&+\frac{1}{\beta} \pa{ \frac{1}{\beta} - \delta\sqrt{\varepsilon(t)} - \frac{\beta \delta}{2} \frac{\dot{\epsilon}(t)}{\sqrt{\varepsilon(t)}      } } x(t)
 +\frac{1}{\beta} \left(  \dot x(t) +  \beta  \nabla \varphi_{t}(x(t)) - \pa{\frac{1}{\beta} - \delta\sqrt{\varepsilon(t)}} x(t)   \right)    =  0 .
\end{eqnarray*}
After simplification of the above expression, we obtain \eqref{basic-equ-b}.

\medskip

\textit{{\ref{item:Thm-first-order-system_1}}} $\Rightarrow$ \textit{{\ref{item:Thm-first-order-system_2}}} Define $y(t)$ by  the first equation of \eqref{eq:ISEHDequiv1stode}. Differentiating $y(t)$ and using equation \eqref{basic-equ-b} allows one to eliminate $\ddot{x}(t)$, which finally gives the second equation of \eqref{eq:ISEHDequiv1stode}. \qed
\end{proof}
Based on Theorem~\ref{Thm-first-order-system-existence}, the following first order formulation helps give meaning to the (TRISHE) system  when $f \in \Gamma_0(\cH)$.
It is obtained by substituting the subdifferential $\partial \varphi_{t}$ for the gradient $\nabla \varphi_{t}$ in the first-order formulation \eqref{eq:ISEHDequiv1stode}.

\begin{definition}
Let $\delta >0$, $\beta>0$ and $f \in \Gamma_0(\cH)$. {Given
$(x_0, y_0) \in  \dom(f) \times \cH $, the Cauchy problem for the  inertial system (TRISHE)  with  generalized Hessian driven damping is defined by}
\begin{equation}\label{eq:fos2}
\begin{cases}
\dot x(t) +  \beta  \partial \varphi_{t}(x(t)) - \pa{\frac{1}{\beta} - \delta\sqrt{\varepsilon(t)}} x(t) + \frac{1}{\beta}y(t) & \ni 0 \\
\dot{y}(t)-\pa{\frac{1}{\beta} - \delta\sqrt{\varepsilon(t)} - \frac{\beta \delta}{2} \frac{\dot{\epsilon}(t)}{\sqrt{\varepsilon(t)}      } }x(t) + \frac{1}{\beta} y(t) & = 0 \\
x(t_0)=x_0, y(t_0)=y_0 .
\end{cases}
\end{equation}
\end{definition}
Let us formulate \eqref{eq:fos2} in a condensed form as an evolution equation in the product space $\cH \times \cH$.
Setting $Z(t) = (x(t), y(t)) \in  \cH \times \cH$, \eqref{eq:fos2}
can be equivalently written 

\begin{equation}\label{eq:fos3}
\dot{Z}(t) + \partial \cG(t,Z(t)) + \cD(t, Z(t)) \ni 0, \quad {Z(t_0)=(x_0,y_0)},
\end{equation}

\noindent where $\cG (,\cdot) \in \Gamma_0(\cH \times \cH)$ is the function defined by $\cG (t,Z) = \beta \varphi_{t}(x)$, and the time-dependent operator $\cD:\ [t_0,+\infty[ \times \cH \times \cH \to \cH \times \cH$ is given by
\begin{equation}\label{eq:fos5}
\cD(t,Z)=
  \pa{- \pa{\frac{1}{\beta} - \delta\sqrt{\varepsilon(t)}} x + \frac{1}{\beta}y,
    -\pa{\frac{1}{\beta} - \delta\sqrt{\varepsilon(t)} - \frac{\beta \delta}{2} \frac{\dot{\epsilon}(t)}{\sqrt{\varepsilon(t)}      } }x + \frac{1}{\beta} y }.
\end{equation}

The differential inclusion \eqref{eq:fos3} is governed by the sum of the time dependent maximally monotone operator $\partial \cG (t,.\cdot)$ (a convex subdifferential) and the time-dependent linear continuous operator $\cD (t,\cdot)$. The existence and uniqueness of a global solution for the corresponding Cauchy problem is a consequence of the general theory of evolution equations governed by maximally monotone operators \cite[Proposition 3.12]{Bre1}, and of the fact that $t \mapsto \varphi_t (x)$ is a nonincreasing function, see \cite{AD}. In this setting, the notion of classical solution is replaced by the  notion of strong solution, see \cite[Definition 3.1]{Bre1}, \cite[Theorem 4.4]{APR}, \cite[Theorem 2.4]{AFK}.

\section{Conclusion, perspective}\label{sec:Conclusion}

For convex optimization in Hilbert spaces, we have introduced a damped inertial dynamics which combines Hessian driven damping with Tikhonov regularization. 
The Hessian driven damping and the Tikhonov regularization term  induce specific favorable geometric properties, related to curvature aspects.
The Tikhonov term regulates the objective function. It makes the  dynamic relevant of the heavy ball with friction method for a strongly convex function.
The Hessian driven damping acts on the velocity vector in a similar way as continuous Newton's method. It has a corrective effect by damping the oscillations that arise with ill-conditioned optimization problems.
It turns out that the two techniques combine well and provide a substantial improvement to Nesterov's accelerated gradient method.
While preserving fast convergence of values, they ensure fast convergence of gradients to zero, they significantly reduce oscillations, and  provide convergence to the minimum norm solution.

Our study provides a solid basis for the convergence analysis of algorithms obtained by temporal discretization, which is a subject of further work.
Our approach  calls for many developments. We showed that our approach can be naturally extended to the case of nonsmooth convex optimization, and   the study of additively structured "smooth + nonsmooth" convex optimization problems.
Our study naturally leads to applications in various fields such as inverse problems for which  strong convergence of trajectories, and obtaining a solution close to a desired state are key properties. 

It is likely that a parallel approach can be developed for   multiobjective optimization for the dynamical approach to Pareto optima, and within the framework of potential games. The Lyapunov analysis developed in this paper could  also be very  useful to study the asymptotic stabilization  of several classes of PDE's, for example nonlinear damped wave equations.
One of the main challenges related to our study is whether similar convergence results can be obtained using autonomous systems, see \cite{ABotCest}  for a first systematic study of this question. 
Indeed the study of autonomous versions of the Tikhonov method, such as the Haugazeau method, in the context of dynamic systems, and rapid optimization, is a field largely to be explored.

\paragraph{\textbf{Acknowledgments}:} The research of A\"icha BALHAG was supported by  the EIPHI Graduate School (contract ANR-17-EURE-0002).

\end{document}